\documentclass[12pt]{amsart}

\usepackage{color}
\usepackage{mathrsfs}
\usepackage{amssymb}
\usepackage{graphicx}  


\def\pasdegrille{\let\grille = \pasgrille}
\def\ecriture#1#2{\setbox1=\hbox{#1}
\dimen1= \wd1
\dimen2=\ht1
\dimen3=\dp1
\grille #2 \box1 }
\def\aat#1#2#3{
\divide \dimen1 by 48
\dimen3=\dimen1
\multiply \dimen1 by #1
\advance \dimen1 by -\dimen3
\divide \dimen1 by 101
\multiply \dimen1 by 100
\divide \dimen2 by \count11
\multiply \dimen2 by #2 
\setbox0=\hbox{#3}\ht0=0pt\dp0=0pt
  \rlap{\kern\dimen1 \vbox to0pt{\kern-\dimen2\box0\vss}}\dimen1= \wd1
\dimen2=\ht1}
\def\pasgrille{
\count12= \dimen1 
\divide \count12 by 50
\divide \dimen2 by \count12
\count11 =\dimen2
\ 
\divide \dimen1 by 48
\setlength{\unitlength}{\dimen1}
\smash{\rlap{\ }}
\dimen1= \wd1
\dimen2=\ht1
}
\def\grille{
\count12= \dimen1 
\divide \count12 by 50
\divide \dimen2 by \count12
\count11 =\dimen2
\ 
\divide \dimen1 by 48
\setlength{\unitlength}{\dimen1}
\smash{\rlap{\graphpaper[1](0,0)(50, \count11)}}
\dimen1= \wd1
\dimen2=\ht1
}
\pasdegrille


\newcommand{\hamvf}{{H}}
\newcommand{\pa}{\partial}
\newcommand{\sK}{\mathscr{K}}
\newcommand{\abs}[1]{{\left\lvert{#1}\right\rvert}}
\newcommand{\norm}[1]{{\left\lVert{#1}\right\rVert}}
\newcommand{\ang}[1]{{\left\langle{#1}\right\rangle}}

\setlength{\textheight}{8in} \setlength{\oddsidemargin}{0.0in}
\setlength{\evensidemargin}{0.0in} \setlength{\textwidth}{6.4in}
\setlength{\topmargin}{0.18in} \setlength{\headheight}{0.18in}
\setlength{\marginparwidth}{1.0in}
\setlength{\abovedisplayskip}{0.2in}

\setlength{\belowdisplayskip}{0.2in}

\setlength{\parskip}{0.05in}

\newcommand{\neigh}{\operatorname{neigh}}
\newcommand{\hph}{{\widehat \varphi}}
\newcommand{\Op}{{\operatorname{Op}^{{w}}_h}}
\newcommand{\be}{\begin{equation}}
\newcommand{\ee}{\end{equation}}

\pagestyle{headings}
 
\newcommand{\CC}{{\mathbb C}}
\newcommand{\CI}{{\mathcal C}^\infty }
\newcommand{\CIc}{{\mathcal C}^\infty_{\rm{c}} }
\newcommand{\CIb}{{\mathcal C}^\infty_{\rm{b}} }

\newcommand{\RR}{{\mathbb R}}

\newcommand{\ST}{{\widetilde S_{\frac12}}}
\newcommand{\PT}{{\widetilde \Psi_{\frac12}}}

\newcommand{\defeq}{\stackrel{\rm{def}}{=}}

\newcommand{\NN}{{\mathbb N}}

\newcommand{\supp}{\operatorname{supp}}

\newcommand{\rest}{\!\!\restriction}

\renewcommand{\Re}{\mathop{\rm Re}\nolimits}
\renewcommand{\Im}{\mathop{\rm Im}\nolimits}
\newcommand{\ad}{\operatorname{ad}}

\newcommand{\ep}{\epsilon}

\theoremstyle{plain}

\newtheorem{thm}{Theorem}
\newtheorem{cor}{Corollary}
\newtheorem{prop}{Proposition}[section]
\newtheorem{lem}[prop]{Lemma}
\newtheorem*{dyn}{Dynamical Hypotheses}

\theoremstyle{definition}

\numberwithin{equation}{section}

\def\bbbone{{\mathchoice {1\mskip-4mu {\rm{l}}} {1\mskip-4mu {\rm{l}}}
{ 1\mskip-4.5mu {\rm{l}}} { 1\mskip-5mu {\rm{l}}}}}

\def\squarebox#1{\hbox to #1{\hfill\vbox to #1{\vfill}}} 
\newcommand{\stopthm}{\hfill\hfill\vbox{\hrule\hbox{\vrule\squarebox 
                 {.667em}\vrule}\hrule}\smallskip}

\newcommand{\tih}{{\tilde h}}

\usepackage{amsxtra}

\ifx\pdfoutput\undefined
  \DeclareGraphicsExtensions{.pstex, .eps}
\else
  \ifx\pdfoutput\relax
    \DeclareGraphicsExtensions{.pstex, .eps}
  \else
    \ifnum\pdfoutput>0
      \DeclareGraphicsExtensions{.pdf}
    \else
      \DeclareGraphicsExtensions{.pstex, .eps}
    \fi
  \fi
\fi

\title[Resolvent estimates for normally hyperbolic trapped sets]
{Resolvent estimates for normally hyperbolic trapped sets}
\author[J. Wunsch]
{Jared Wunsch}
\author[M. Zworski]
{Maciej Zworski}
\address{Northwestern University}
\address{University of California, Berkeley}

\begin{document}    


\maketitle   
\section{Introduction and statement of results}

We give pole free strips and estimates for resolvents
of semiclassical operators which, on the level of the 
classical flow, have normally hyperbolic smooth trapped
sets of codimension two in phase space.
Such trapped sets are structurally stable
-- see \S \ref{ind} -- and our motivation 
comes partly from considering the wave equation 
for slowly rotating Kerr black holes, whose trapped {\em photon spheres}
have precisely that dynamical structure -- see
\S \ref{inb}. From the semiclassical point of view an example to keep
in mind is given by 
\[  P ( z ) = -  h^2 \Delta + V ( x )-1  - z  \,, \ \ V \in \CI_c ( \RR^n ; \RR) 
\,, \]
with the classical flow described by Newton's equations:
\[  x' ( t ) = 2 \xi ( t ) \,, \ \ \xi' ( t ) = - V' ( x ( t ) ) \,, \ \ 
\varphi^t ( x( 0 ) , \xi ( 0 ) ) \defeq ( x (  t) , \xi ( t) ) \,. \]
The incoming and outgoing tails, $ \Gamma_\pm $, and the trapped set, $ K $, 
are defined by 
\[ \Gamma_\pm = \{ ( x , \xi ) \; : \; \exists \, M , \  
| \varphi^t ( x , \xi ) | \leq M \,, t \rightarrow \mp \infty \} \,, 
\ \ K \defeq \Gamma_+ \cap \Gamma_- \,. \]
As explained in \S \ref{inb} it is important to consider more 
general families of operator pencils. The general assumptions
will be given in \S \ref{ina} but the result is already non-trivial 
in the case presented above: $ X = \RR^n $, and $ P ( z ) = P - z $,
$ P = - h^2 \Delta + V ( x) - 1 $.

\begin{thm} 
\label{th1} Suppose that $ P ( z ) $ is a family of operators
satisfying the assumptions in \S \ref{ina}, with a trapped
set $K$ which is smooth and normally hyperbolic in the sense of \S
\ref{ind} and contained in $U_1 \Subset X.$

If the symbol of $ \partial_z P ( 0 ) $ is strictly negative near $ p^{-1} ( 0 ) 
\cap T_{U_2 }^* X $ and 
$ W \in \CI ( X ;\RR) $ satisfies 
\[ W \geq 0 \,, \ \ \  W \rest_{U_1 } = 0 \,, \ \ \ 
 W \rest_{ X \setminus U_2 } = 1 \,, \]
where $\pi(K) \subset \overline{U_1} \subset U_2 \Subset X$
then there exist $ \delta_0 ,\nu_0 > 0  $ such that  for $ | z| < \delta_0 $
we have 
\begin{equation}
\label{eq:th1} \|  ( P ( z) - i W ) ^{ -1 }\|_{L^2 \rightarrow L^2 }
\    \lesssim \ \begin{cases} 
1/ \Im z \,, & \Im z > 0  \,, \\
h^{-1} \log (1/h)\,, &   \Im z=0\,, \\ 
h^{-k} \,,  &    \Im z > -\nu_0 h  \,,
\end{cases}\end{equation}
and in particular, $ z \mapsto ( P ( z) - i W ) ^{-1} $ is holomorphic in 
$ \{ | z | < \delta_0 \,, \ \Im z > - \nu_0 h \}$.
\end{thm}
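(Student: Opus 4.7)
The plan is to establish the bound in three stages: first the easy upper half-plane estimate, then the crucial microlocal estimate near the trapped set $K$, and finally a gluing argument with non-trapping estimates in the exterior, followed by analytic continuation to obtain the lower half-plane holomorphy.

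For $\Im z > 0$, since the symbol of $\partial_z P(0)$ is strictly negative near the characteristic set, a standard G\r{a}rding-type argument gives that $\Im(P(z) - iW) \leq -c\,\Im z - W$ microlocally, yielding the $1/\Im z$ bound directly by pairing $\ang{(P(z)-iW)u,u}$. Away from $\pi^{-1}(K)$ and away from $\supp(1-W)$, the classical flow is non-trapping on the energy surface: every trajectory escapes either to infinity in phase space (where standard outgoing resolvent theory applies) or into the absorbing region $\{W>0\}$. Thus, on any compactly supported microlocal cutoff disjoint from a small neighborhood of $K$, one obtains an $O(1/h)$ resolvent bound by a standard positive commutator / escape function argument, with the $iW$ playing the role of a complex absorbing potential that replaces outgoing boundary conditions.

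The main obstacle and heart of the proof is the microlocal estimate in a neighborhood of $K$. Here I would use the normal hyperbolicity (as formulated in \S\ref{ind}) to obtain adapted coordinates in which $\Gamma_\pm$ are cut out by defining functions $\varphi_\pm$, with Poisson brackets $\{p, \varphi_\pm\} = \mp \lambda_\pm \varphi_\pm$ modulo lower order terms, for positive functions $\lambda_\pm$ bounded below by the minimal expansion rate $\nu_{\min} > 0$. The escape function I would try is a logarithmic weight of the form
\[
G = \chi(p)\, \tfrac{1}{2} \log\bigl( (\varphi_+^2 + \varphi_-^2)/h + 1 \bigr),
\]
truncated to a small neighborhood of $K$, which lies in a $\tfrac{1}{2}$-type symbol class (hence the class $\widetilde S_{1/2}$ from the preamble). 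A direct computation gives $H_p G \geq c > 0$ on a shell $\{h \ll \varphi_+^2 + \varphi_-^2 \ll 1\}$, while on the core $\{\varphi_\pm^2 \lesssim h\}$ the weight gives no gain but the region has phase-space volume $O(h)$, allowing the error to be absorbed. Conjugating $P(z)$ by $e^{G/h}$ (with $G$ of size $\log(1/h)$, so $e^{G/h}$ is polynomial in $1/h$) shifts the imaginary part by roughly $h^{-1} H_p G$; on the good shell this dominates, while on the core $\{\varphi_\pm^2 \lesssim h\}$ the absorbing potential $iW$ in the glued estimate takes over after the flow has spread the mass out. The result is an estimate of the conjugated operator that loses exactly $\log(1/h)/h$, yielding the middle bound in \eqref{eq:th1}.

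Finally, to get the lower half-plane bound and the holomorphic extension, I would repeat the positive commutator calculation keeping track of the extra term $-i(\Im z)\,\partial_z P(0)$. Provided $\Im z > -\nu_0 h$ with $\nu_0$ strictly smaller than the minimal normal expansion rate captured by $H_p G$, this perturbation can be absorbed into the main positivity on the good shell. The resulting a priori estimate bounds $(P(z) - iW)^{-1}$ by $h^{-k}$ (with $k$ depending on the number of derivatives lost in the semiclassical sharp G\r{a}rding inequality for the $\tfrac{1}{2}$-class), and standard analytic Fredholm theory promotes this to meromorphy; the a priori bound then excludes poles in the strip, giving holomorphy. The delicate point throughout is the management of the logarithmic weight in the non-standard symbol class $\widetilde S_{1/2}$, ensuring the remainder terms from sharp G\r{a}rding and Beals-type calculus remain negligible against the main positive commutator term.
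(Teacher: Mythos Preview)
Your overall architecture (three regimes, escape function in an $\widetilde S_{1/2}$ class, gluing with a non-trapping exterior estimate) matches the paper, but there is a genuine error in the core step: your escape function has the wrong sign behavior. You propose $G = \chi \cdot \tfrac{1}{2}\log\bigl((\varphi_+^2 + \varphi_-^2)/h + 1\bigr)$. Using your own relations $H_p\varphi_\pm = \mp\lambda_\pm\varphi_\pm$, one computes
\[
H_p G \sim \frac{-\lambda_+\varphi_+^2 + \lambda_-\varphi_-^2}{\varphi_+^2 + \varphi_-^2 + h},
\]
which is \emph{indefinite}: near $\Gamma_-\setminus K$ (where $\varphi_- = 0$, $\varphi_+ \neq 0$) it is strictly negative, so no positive commutator estimate can follow. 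The paper instead takes the \emph{ratio}
\[
G = \chi \log\frac{\varphi_-^2 + h/\tilde h}{\varphi_+^2 + h/\tilde h} + C_1\log(1/h)\,\chi_1 G_1,
\]
so that the two contributions to $H_p G$ add rather than cancel: $H_p G = \widehat\varphi_+^2 + \widehat\varphi_-^2 + \cdots \geq 0$ with $\widehat\varphi_\pm = c_\pm\varphi_\pm/\sqrt{\varphi_\pm^2 + h/\tilde h}$.

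Second, your treatment of the core $\{\varphi_\pm^2 \lesssim h\}$ by a phase-space volume argument is not the paper's mechanism and would not close the estimate (one needs an operator lower bound, not smallness of a region). The paper writes $\Phi_+^2 + \Phi_-^2 = \Phi^*\Phi + i[\Phi_+,\Phi_-]$ with $\Phi = \Phi_+ - i\Phi_-$ and $\Phi_\pm = \widehat\varphi_\pm^w$, and observes that the commutator has symbol
\[
h\{\widehat\varphi_+,\widehat\varphi_-\} \sim \frac{\tilde h\, c_+c_-\{\varphi_+,\varphi_-\}}{(1+\tilde\varphi_+^2)^{3/2}(1+\tilde\varphi_-^2)^{3/2}}.
\]
The positivity $\{\varphi_+,\varphi_-\}|_K > 0$ --- a consequence of $K$ being symplectic --- is what rescues the estimate in the core, giving a uniform lower bound of order $\tilde h$ for the combined symbol. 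This is the key microlocal idea, and it is absent from your sketch.

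Two smaller discrepancies: the paper obtains the real-axis bound by a direct commutator $\Im\langle (P(z)-iW)u, (G^w + M_0\log(1/h))u\rangle$ rather than by conjugation; conjugation by $e^{sG^w}$ appears only for the strip $\Im z > -\nu_0 h$, and the $h^{-k}$ loss there comes from $\|e^{\pm sG^w}\| = O(h^{-k})$ (since $G = O(\log(1/h))$), not from G{\aa}rding remainders. Finally, the paper explicitly avoids analytic Fredholm theory and proves invertibility in the strip by direct a priori estimates.
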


This result is related to the general principle in scattering theory
which in mathematics goes back at least to the work of Lax-Phillips
and Morawetz: the nature of trapping of rays is related to the
distance of \emph{resonances}, which is to say poles of the analytic
continuation of the resolvent, to the real axis. That in turn is
related to energy decay, local smoothing and other properties of the
propagators.  The closeness of these resonances to the real axis is
in particular related to the stability of the trapped trajectories, with stable
trapping giving rise to resonances close to the axis---heuristically,
these are close to being eigenvalues.  By contrast, trapped orbits
near which the dynamics is \emph{hyperbolic} leads to resonances
bounded away from the axis -- see \cite{ZwNo} for a general
introduction. In \cite{Ik} and \cite{NZ3} a gap was established when
hyperbolic trapped sets are fractal and a certain {\em topological
  pressure} condition is satisfied. In Theorem \ref{th1}
the trapped set is smooth and has the maximal dimension.  We assume
that the flow is $r$-normally hyperbolic for every $r$ on this trapped manifold in the
sense of Hirsch, Pugh, and Shub \cite{Hirsch-Pugh-Shub} and Fenichel
\cite{Fenichel}. That assumption is structurally stable -- see \S
\ref{ind}.

The proof of Theorem \ref{th1} is based on a positive commutator
argument with an escape function \eqref{eq:defG}
in a slightly exotic symbolic
class described in \S\S \ref{s2p}-\ref{eaq}. A similar
logarithmically flattened escape
function for more complicated (fractal) trapped sets
was used in \cite{SZ10}. For the semiclassical analysis near
closed hyperbolic orbits similar escape functions were
used by Christianson in \cite{Chr} and \cite{Chr2}. In a way,
the situation here is simpler as we assume that the 
trapped set has codimension two. However, following 
our arguments might simplify the treatment
of closed orbits as well. 

In Theorem \ref{th2} in \S \ref{rere} we present a closely 
related result for resonances. For operators $ P ( z ) = 
- h^2 \Delta + V ( x )-1  - z $ with $ V ( x ) $ 
holomorphic and decaying in a conic neighbourhood of $ \RR^n $ in 
$ \CC^n $ (in fact, for a larger class of operators
with real analytic coefficients in $ \RR^n $ -- \cite{HeSj}) 
a more precise resonance free region was obtained by 
G\'erard-Sj\"ostrand \cite{GeSj2}. The novelty in Theorems \ref{th1}
and \ref{th2} lies 
in the resolvent bounds and the applicability to $ \CI $ coefficients.
The estimates in microlocally weighted spaces of holomorphic functions
in \cite{GeSj2} do not immediately imply polynomial bounds in $ 
h $, in the resonance free strips. For more recent results 
involving scattering with hyperbolic trapped sets we refer
to \cite{BoAl},\cite{BoRa},\cite{NZ3},\cite{NZ4},\cite{PeSt},
and references given there.

As examples of immediate applications of Theorem \ref{th1}
we give the following corollaries which follow immediately from 
the results of \cite{Dat}:
\begin{cor}
\label{cor1}
Suppose that $ X $ is a scattering manifold (that is a manifold
with an asymptotically conic metric) and 
$ -\Delta_g $ is the non-negative Laplace-Beltrami operator
on $ X $. Suppose that the trapped set for the geodesic flow
on $ S^* X $ is normally hyperbolic in the sense of \S \ref{ind}. 
If $ r ( x ) = (1 + d ( x , x_0 )^2)^{\frac12} $, where $ 
d ( x, x_0 ) $ is the distance function to 
any fixed point $ x_0 \in X $, then for $ \lambda > 1 $
\[ \| r^{-\frac12-0} ( - \Delta_g - \lambda \pm i 0 )^{-1} r^{-\frac12-0} \|
_{L^2 ( X ) \rightarrow L^2 ( X ) } \leq  \frac  {C \log
\lambda } \lambda  \,. \]
\end{cor}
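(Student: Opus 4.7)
The plan is to rescale to the semiclassical setting, apply Theorem~\ref{th1} to control the resolvent near the trapped set, and then invoke the gluing theorem of Datchev~\cite{Dat} to convert this into a weighted estimate for the full Laplacian on $X$. Put $h = \lambda^{-1/2}$, so that
\[(-\Delta_g - \lambda \pm i0)^{-1} = h^{2}(-h^{2}\Delta_g - 1 \pm i0)^{-1},\]
and it suffices to bound the semiclassical weighted resolvent of $P := -h^{2}\Delta_g - 1$ near the spectral point $z = 0$.

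First, I would verify that $P(z) = P - z$ fits the framework of \S\ref{ina}. The semiclassical principal symbol is $p = |\xi|_g^{2} - 1$, so $\partial_z P = -1$ is strictly negative, and the $H_p$-flow on $p^{-1}(0)$ is (up to reparametrization) the geodesic flow on $S^{*}X$. By hypothesis its trapped set $K$ is smooth and normally hyperbolic, and in the asymptotically conic geometry trapped geodesics stay in a compact subset of $X$, so $\pi(K) \subset U_1 \Subset X$ for some $U_1$. Choose $W \in \CI(X;\RR)$ as required by Theorem~\ref{th1}, vanishing near $U_1$ and equal to $1$ outside $U_2$. The theorem then furnishes
\[ \|(P - iW)^{-1}\|_{L^2 \to L^2} \lesssim h^{-1}\log(1/h)\]
on the real axis.

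Second, Datchev's gluing theorem in \cite{Dat} takes exactly such an unweighted CAP-regularized bound, combined with a standard non-trapping weighted resolvent estimate of Cardoso--Vodev type on the asymptotically conic end $X \setminus U_2$, and outputs a weighted global bound
\[\|r^{-1/2-0}(P \pm i0)^{-1}r^{-1/2-0}\|_{L^2 \to L^2} \lesssim h^{-1}\log(1/h).\]
Undoing the rescaling (multiplying by $h^{2}$ and writing $h = \lambda^{-1/2}$) produces the bound claimed in the corollary.

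The substantive content is in the two cited ingredients. The only direct step in this proof is checking that a single cutoff geometry on $X$ can simultaneously serve as the $U_1 \Subset U_2$ data for Theorem~\ref{th1} and as the interior/exterior partition required by \cite{Dat}, and that the non-trapping hypothesis of \cite{Dat} is verified on the end; in the asymptotically conic setting this compatibility is structural, which is why the corollary is truly immediate.
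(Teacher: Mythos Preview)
Your approach is exactly what the paper has in mind: the corollary is stated in the text as following ``immediately from the results of \cite{Dat}'', and you have correctly spelled out that this means applying Theorem~\ref{th1} to the semiclassically rescaled operator $P=-h^2\Delta_g-1$ and then invoking Datchev's gluing to pass from the CAP-regularized estimate to the weighted resolvent bound on the scattering manifold. The paper gives no further detail, so your write-up is already more complete than the original.
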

This implies local smoothing for the Schr\"odinger equation
with a tiny loss of regularity:
\begin{cor}
\label{cor2}
Under the assumptions of Corollary \ref{cor1} 
we have the following estimate valid for any $ T > 0 $ (large) and 
$ \epsilon, \delta > 0 $ (small):
\[ \int_0^T \|r^{-\frac12-\delta } \exp ( { it \Delta_g } ) u \|^2_{H^{\frac12
- \epsilon }  ( X ) }\, dt \leq C_{ T, \epsilon, \delta} \| u \|_{L^2( X ) }^2 
\,. \]
\end{cor}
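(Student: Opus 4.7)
The plan is to deduce the local smoothing estimate from the weighted resolvent bound of Corollary \ref{cor1} via Kato's smoothing framework, combined with a Littlewood--Paley decomposition in $H := -\Delta_g$ to absorb the loss of $\tfrac12-\epsilon$ derivatives.

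Fix $\psi \in \CIc((1/4, 4))$ with $\sum_{j \geq 1} \psi(2^{-2j}\mu) = 1$ on $[1,\infty)$, and complete the partition of unity with $\psi_0 \in \CIc([0,2])$. Write $u = u_0 + \sum_{j \geq 1} u_j$ with $u_0 = \psi_0(H)u$ and $u_j = \psi(2^{-2j}H)u$. The low-frequency piece is harmless: $e^{it\Delta_g} u_0 \in H^s$ for every $s$ with norm $\lesssim \|u\|_{L^2}$, contributing $\lesssim T\|u\|_{L^2}^2$ trivially. For each $j \geq 1$, I would apply Kato's smoothing theorem to $A_j = r^{-\frac12-\delta}\widetilde\psi(2^{-2j}H)$, where $\widetilde\psi \in \CIc((1/8,8))$ equals $1$ on $\supp\psi$, exploiting the fact that $\widetilde\psi(2^{-2j}H)(H-\lambda-i0)^{-1}$ is sharply localized to the spectral window $\lambda \in [4^{j}/8, 8\cdot 4^{j}]$. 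This gives
\[
  \int_\RR \|r^{-\frac12-\delta} e^{it\Delta_g} u_j\|_{L^2}^2\, dt \;\lesssim\; \sup_{\lambda \sim 4^j} \|r^{-\frac12-\delta}\,(H-\lambda-i0)^{-1}\, r^{-\frac12-\delta}\|_{L^2\to L^2}\, \|u_j\|_{L^2}^2 \;\lesssim\; \frac{j}{4^j} \|u_j\|_{L^2}^2,
\]
the last bound coming from Corollary \ref{cor1}.

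To upgrade the spatial $L^2$ norm to $H^{\frac12-\epsilon}$, I would use the Littlewood--Paley characterization $\|v\|_{H^{\frac12-\epsilon}}^2 \sim \sum_{k} 2^{k(1-2\epsilon)} \|\psi(2^{-2k}H)v\|_{L^2}^2$ applied to $v(t) = r^{-\frac12-\delta} e^{it\Delta_g} u$. The commutator $[\psi(2^{-2k}H), r^{-\frac12-\delta}]$ is a scattering pseudodifferential operator whose $L^2$ norm is $O(2^{-k})$, so each Littlewood--Paley piece reduces to $r^{-\frac12-\delta} e^{it\Delta_g}u_k$ plus a geometrically small error. The previous display then yields
\[
  \int_0^T 2^{k(1-2\epsilon)}\|\psi(2^{-2k}H)(r^{-\frac12-\delta}e^{it\Delta_g}u)\|_{L^2}^2\,dt \;\lesssim\; \frac{k}{2^{k(1+2\epsilon)}}\|u_k\|_{L^2}^2 + T\cdot 2^{-k(1+2\epsilon)}\|u\|_{L^2}^2 ,
\]
a series which sums geometrically in $k$; combining with the almost-orthogonality $\sum_k \|u_k\|_{L^2}^2 \sim \|u\|_{L^2}^2$ yields the corollary.

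The main technical ingredient is the commutator/Littlewood--Paley step, which requires Melrose's scattering pseudodifferential calculus to ensure that commutators with the scattering weight $r^{-\frac12-\delta}$ are of sufficiently low order and decay to be summable. The decisive analytic input, however, is that Corollary \ref{cor1} is \emph{logarithmic} rather than polynomial in $\lambda$: the factor $1/\lambda$ precisely cancels the $\lambda^{\frac12-\epsilon}$ growth inherent in the $H^{\frac12-\epsilon}$ norm, with a slack of $\lambda^{-\epsilon}$ that permits the arbitrarily small loss $\epsilon$ at the endpoint.
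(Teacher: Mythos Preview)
The paper does not give its own proof of this corollary; it simply asserts, immediately before stating Corollaries~\ref{cor1} and~\ref{cor2}, that both ``follow immediately from the results of \cite{Dat}.'' Your outline --- Kato's $H$-smoothness criterion applied after a dyadic spectral decomposition in $-\Delta_g$, with the $(\log\lambda)/\lambda$ weighted resolvent bound of Corollary~\ref{cor1} supplying just enough decay to sum the pieces against the $H^{\frac12-\epsilon}$ weights --- is the standard passage from a high-energy resolvent estimate to local smoothing, and is in the spirit of the cited reference. In that sense your approach and the paper's (outsourced) argument coincide; the paper offers nothing further to compare against.
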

Based on this, and assuming that the curvature of the 
asymptotically conic manifold is negative (in every compact set),
the results of \cite{BGH} show that Strichartz estimates hold
with no loss at all.

Our motivation for considering this geometric set-up comes from 
the \emph{Kerr black hole.}  This is a family of Lorentzian metrics
which solve the Einstein equations and describe rotating black holes. 
We refer to \cite{Dafermos-Rodnianski2} for a survey of
mathematical progress on the wave equation for these metrics and to 
\cite{Tataru-Tohaneanu1} for some more recent results and references. 
In the physics literature the decay of waves has been studied 
in terms of {\em quasinormal modes} which are the analogues of 
scattering resonances in this setting -- see \cite{KoSm} for
a physics introduction and \cite{BoHa} for a recent mathematical
result which provided an expansion of waves in the Schwarzschild
-De Sitter background in terms of resonances. 

Obstructions to rapid energy decay occur, heuristically, due to
separate mechanisms at high and low frequencies.  At high frequencies
it is expected that the geometry of the trapped set plays a key role
and it is on this geometry that we focus our attention.  As recalled
below, the trapped set of Kerr is indeed an $r$-normally hyperbolic
manifold (within the energy surface) for all $r$, diffeomorphic to
$T^* S^2$ (or $S^*S^2$ if we restrict to fixed energy).  It is thus of
interest to explore the limits placed on exponential local energy
decay by this trapping mechanism, and this is exactly the role of
resonances.  That is to say, as the Kerr metric is stationary, we may
Fourier transform away the ``time'' variable, and try to study the
poles of the putative analytic continuation of the resulting
stationary operator across its continuous spectrum.  This motivates
considering general operator pencils $ P ( z ) $ in place of $ P - z$.

In the case of Kerr the principal obstacle, compared to the
Schwarzschild analysis \cite{BoHa} is the failure of ellipticity of
the stationary operator $P ( z ) $ near the event horizon of the black
hole, within the so-called ``ergo-region.''  This failure reflects the
failure of our timelike Killing field (with respect to which we have
Fourier transformed) to be timelike in the region in question.  Thus,
we reduce our question to a simpler model problem by cutting away the
ergo-region.  To do this, we modify our stationary operator by
considering only the form of the operator near its trapped set, and
then adding a \emph{complex absorbing potential} to damp waves
propagating outward from it.  We then consider the complex eigenvalues
of the resulting non-self-adjoint operator as a proxy for resonances.
Such a construction is rigorously known to approximate resonances in
certain cases \cite{Stefanov1}.  Thus Theorem~\ref{th1} yields a gap
in the spectrum of the operator $P ( z ) -iW$ near the real axis, at
high frequency (i.e.\ in the semiclassical limit). Recently, a
meromorphic continuation of $ P( z )^{-1} $ and a rigorous definition
of quasinormal modes for Kerr-De Sitter black holes have been obtained
by Dyatlov \cite{Dya}.

Our paper is concerned only with the analysis near the trapped set.
Unlike in most 
other mathematical works on Schwarzschild and Kerr
black holes -- see for instance 
\cite{BS}, 
\cite{BoHa}, \cite{DSS,DSS1}, \cite{FKSY,FKSY:erratum},
\cite{SaBarreto-Zworski}, \cite{To}---this analysis of the trapped set
does not use separation of variables and properties of
the Regge-Wheeler potential. It  is carried out in a way
applicable  to the perturbations of the metric.
The structure of the trapped set does
not change under those pertubations but one cannot separate variables
anymore -- see the end of \S \ref{ind} and \S \ref{inb} for
more details.

To indicate how the local results near the trapped set 
can be used to obtain energy decay 
we present Theorem \ref{t:dec} in \S \ref{rere}. 
Here is its simplest version:
\begin{cor}
\label{cor3}
Suppose that $ X = X_0 \sqcup (\RR^n \setminus B ( 0 , R )) 
\sqcup \cdots \sqcup (\RR^n \setminus B ( 0 , R )) $, where
$ X_0 $ is a smooth compact Riemannian manifold with boundary, with 
the metric $ g$ equal to the usual Euclidean metric in the
infinite ends, $  \RR^n \setminus B ( 0 , R ) $. If $ n $ is 
odd and the trapped set for the geodesic flow on $ S^* X $ 
is normally hyperbolic in the sense of \S \ref{ind}, then 
the local energy decays exponentially: 
for any $ \epsilon > 0 $ there exists 
$ \alpha = \alpha ( \epsilon ) > 0 $, such that if
\[   (\partial_t^2 - \Delta_g ) u = 0 \,, \ \ u\rest_{t=0} = u_1 \,,
\ \ \partial_t u \rest_{t=0} = u_1 \,, \ \ \supp u_j \subset U 
\Subset X \,, \]
then for any $ V \Subset X $ we have
\begin{equation}
\label{eq:cor3}
 \int_{ V } \left( | u ( t , x )|^2 + |\partial_t u ( t, x ) |^2 
\right) 
dx  \leq C e^{ - \alpha t } (  \| u_0 \|_{H^{1+\epsilon} }^2 + 
\| u_1 \|_{H^\epsilon }^2 ) \,, \end{equation}
where $ C $ depends on $ U , V $, and $ \epsilon $.
\end{cor}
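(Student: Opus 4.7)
The plan is to deduce the local energy estimate from a contour-deformation argument built on Theorem~\ref{th1}, after making the detour to the $\lambda$-plane via odd-dimensional meromorphic continuation. For $ u_0, u_1 $ supported in $ U $ and a cutoff $ \chi \in \CIc ( X ) $ with $ \chi \equiv 1 $ on $ V \cup U $, I would use the half-wave representation
\[
\chi u ( t ) \; = \; \frac{1}{\pi} \int_0^\infty e^{-it\lambda} \,\chi \,\Im R(\lambda + i0) \,\chi\, \bigl(\lambda u_0 + i u_1\bigr)\, d\lambda \; + \; \text{(complex conjugate)} ,
\]
where $ R(\lambda) \defeq (-\Delta_g - \lambda^2)^{-1} $. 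The plan is to shift this contour into $ \Im\lambda < 0 $, picking up an $ e^{-\alpha t} $ factor, so I need the cutoff resolvent $ \chi R(\lambda) \chi $ to be holomorphic with controlled growth in a strip below the real axis. Because $ n $ is odd and the ends of $ X $ are exactly Euclidean, the Mazzeo–Melrose / Vainberg continuation applies: $ \chi R(\lambda) \chi $ continues meromorphically from $ \Im\lambda > 0 $ to $ \lambda \in \CC $ as a bounded operator on $ L^2 $.

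The key high-frequency input is Theorem~\ref{th1}, applied to the semiclassical pencil $ P(z) = -h^2\Delta_g - 1 - z $ together with a complex absorbing potential $ W $ supported away from the trapped set. Under the correspondence $ z = h^2 \lambda^2 - 1 $ with $ h = 1/\Re\lambda $, the semiclassical strip $ \Im z > -\nu_0 h $ translates (after accounting for $ 2 h^2 \Re\lambda\, \Im\lambda \approx 2 h\, \Im\lambda $) to a strip of constant width $ \Im\lambda > -\nu_0/2 $ in the spectral parameter, and the polynomial bound in the theorem yields $ \|\chi R(\lambda) \chi\|_{L^2\to L^2} \lesssim |\lambda|^{N} $ there, once one shows that adding the absorption $ iW $ away from $ K $ only perturbs the true resolvent by compact, controllable terms (a standard gluing/parametrix step using nontrapping resolvent bounds in the Euclidean ends). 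At low frequency, finiteness of the pole set in a bounded region plus absence of embedded eigenvalues and of a zero resonance in the non-trapping exterior geometry (a Rellich uniqueness argument) ensures that one may choose $ 0 < \alpha_0 < \nu_0/2 $ and $ \Lambda_0 $ so that $ \chi R(\lambda) \chi $ is holomorphic in $ \{-\alpha_0 < \Im\lambda \le 0, \, |\lambda| < \Lambda_0\} $.

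With holomorphy secured in the full slab $ \{-\alpha_0 < \Im\lambda < 0\} $, I would deform the contour of integration onto $ \{\Im\lambda = -\alpha\} $ for some $ \alpha < \alpha_0 $ to produce the factor $ e^{-\alpha t} $. The delicate point is to ensure convergence of the shifted integral with only $ \epsilon $ loss of derivatives on the data. For this I would invoke a Phragmén–Lindelöf / Hadamard three-lines interpolation between the limiting absorption bound $ \|\chi R(\lambda)\chi\| \lesssim \log\langle\lambda\rangle / \langle\lambda\rangle $ on $ \Im\lambda = 0 $ (from Corollary~\ref{cor1}, rewritten in the frequency variable) and the $ |\lambda|^N $ bound on $ \Im\lambda = -\nu_0/2 $, giving on $ \Im\lambda = -\alpha $
\[
\|\chi R(\lambda)\chi\|_{L^2\to L^2} \; \lesssim \; \langle\lambda\rangle^{-1 + 2\alpha(N+1)/\nu_0}.
\]
Choosing $ \alpha = \alpha(\epsilon) > 0 $ small enough that $ 2\alpha(N+1)/\nu_0 \le \epsilon $ makes the shifted integral converge absolutely against data in $ H^{1+\epsilon} \times H^\epsilon $, and yields \eqref{eq:cor3}.

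The main obstacle is the passage between the semiclassical formulation of Theorem~\ref{th1} (an eigenvalue-free strip for $ P(z)-iW $) and an honest resonance-free strip with polynomial bounds for the true cutoff resolvent $ \chi R(\lambda) \chi $ on the manifold $ X $: this requires gluing the microlocal estimate near the trapped set to nontrapping propagation estimates in the Euclidean ends, and then removing the absorbing potential by a Fredholm perturbation argument, both of which have to be carried out uniformly in $ h $. Once this gluing is in place, the remaining steps—meromorphic continuation, low-frequency holomorphy, contour deformation and interpolation—are standard.
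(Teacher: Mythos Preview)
Your outline is essentially correct, but the paper takes a shorter path that avoids both of the places where you do the most work.

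First, instead of starting from Theorem~\ref{th1} (the complex-absorbing-potential version) and then gluing/removing $iW$ to reach the genuine cutoff resolvent, the paper proves a separate Theorem~\ref{th2} for resonances via complex scaling. This gives directly that $\chi(P-z)^{-1}\chi$ continues holomorphically to $\Im z>-\nu_0 h$ with an $\mathcal{O}(h^{-k})$ bound, which in the $\lambda$-plane is exactly the polynomially bounded resonance-free strip you need. Your gluing/Fredholm step is plausible but is precisely what the paper sidesteps.

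Second, and more substantively different, the paper does \emph{not} interpolate resolvent bounds in the frequency domain. Instead it first proves, via a straightforward contour deformation (Theorem~\ref{t:dec}, case~1), the exponential decay
\[
\int_V \bigl(|u|^2+|\partial_t u|^2\bigr)\,dx \;\le\; C e^{-\alpha_1 t}\bigl(\|u_0\|_{H^{K+1}}^2+\|u_1\|_{H^K}^2\bigr)
\]
with a \emph{fixed} rate $\alpha_1>0$ but a large derivative loss $K$. Then it interpolates in the \emph{time domain}, following Burq: the local energy is trivially bounded by $\|u_0\|_{H^1}^2+\|u_1\|_{L^2}^2$ (energy conservation), and is also bounded by $Ce^{-\alpha_1 t}(\|u_0\|_{H^{K+1}}^2+\|u_1\|_{H^K}^2)$; interpolating between these two estimates for each $t$ gives \eqref{eq:cor3} with $\alpha(\epsilon)=\alpha_1\epsilon/K$. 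Your three-lines argument on the resolvent achieves the same tradeoff between decay rate and regularity loss, but the time-domain interpolation is shorter and requires no care about logarithms or a~priori bounds in the strip.
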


\medskip

\noindent
{\bf Comments on notation.} For a set $ A $ we denote 
by $ \neigh ( A ) $ a small open neighbourhood of $ A $.
For $ V $ a Banach space, $ f = {\mathcal O}_V ( g ) $ means
that $ \| f \|_V \leq C |g| $, with the similar notation 
for operators: $ T = {\mathcal O} ( g ) : V \rightarrow W $,
means $ \| T u \|_W \leq C | g | \|  u \|_{V }$. Unless
specified by a subscript $ C $ denotes a constant the value
of which may vary throughout the paper. The notation 
$ a \lesssim b $ means that $ a \leq C b $.

\medskip

\noindent
{\sc Acknowledgments.}
The authors gratefully acknowledge helpful conversations with Kiril
Datchev, St\'ephane Nonnenmacher, Clark Robinson, and Amie Wilkinson;
Semyon Dyatlov and Andr\'as Vasy provided helpful comments and
corrections to the manuscript.  This work was partly supported by NSF
grants DMS-0700318 (JW) and DMS-0654436 (MZ).

\subsection{Global assumptions on $ P ( z ) $.}
\label{ina}

We make abstract assumptions on $ P ( z ) $ in order to allow very
general end structures.  
The assumptions are in some sense the reversal of the black box
assumptions of \cite{BZ2} and \cite{SZ1}: we specify the operator in
the compact interaction region but allow an almost arbitrary structure
outside. That is natural since we are adding the complex absorbing
potential. Many results about resonances can be rephrased in this
setting.  In some cases they can then be ``glued'' to obtain global
results as was done for scattering manifolds in \cite{Dat}. Some
infinities appear remarkably resilient to that approach, in particular
the ends of conformally compact, that is asymptotically hyperbolic,
manifolds. However, we expect that the Kerr metrics can be ``glued''
to our local construction.

For a concrete example of operators satisfying the abstract
assumptions presented here see \S \ref{rere}.

We consider a holomorphic family of operators, 
\[   z \longmapsto P ( z ) \,, \ \ z \in D ( 0 , \delta_1 ) \,, \]
depending implicitly on the semiclassical parameter $ h $.
These operators act on 
 $\mathcal{H}$, a complex Hilbert space with an orthogonal decomposition 
$${\mathcal{H}}= L^2 ( X_0 ) \oplus {\mathcal{H}}_1 \,, $$
where $ X_0 \Subset X $ is an open submanifold of $ X $ with a 
smooth boundary. 

The corresponding orthogonal projections are denoted by  
$\bbbone_{0}u$ and $\bbbone_{1}u$ respectively, where $u\in \mathcal{H}$. 
The operators 
$$P(z):{\mathcal{H}}\longrightarrow{\mathcal{H}}$$
with the 
domain $\mathcal{D}$, 
independent of $z$ (and of the implicit parameter $ h $), 
and satisfying
$$\bbbone_{0} {\mathcal{D}}=H^2(X_0) \,, \ \ \ 
\partial_z P ( z ) : {\mathcal D} \longrightarrow {\mathcal H} \,, $$ 
see \cite{SZ1} for a more precise meaning of 
the first statement.

We also assume that  
\begin{equation}
\label{eq:2.2}
\bbbone_{0}P(z)u=P_0(z)(u \rest_{X_0} ),\; \text{ for } u\in
 {\mathcal{D}}\,, \end{equation}
where $P_0 (z) \in \Psi^{2,0}_h ( X ) $, 
for real values of $ z $, $ P_0 ( z ) $
is a formally self-adjoint operator on $L^2(X)$ given by  
\[ P = p ( x, h D ) + h p_1 ( x , h D ; h ) \,, \ \ 
p_1(x,hD) \in \Psi^{2,0}_h(X), \ \ p ( x, \xi ) \geq \langle \xi \rangle^2/C  - C 
\,; \]
see \S \ref{sc} for the definition of the classes of operators, and 
for the conditions on $ X$.

We assume that $ P ( z ) $ is self-adjoint for $ z \in \RR $, 
and that $ P ( z ) $ is holomorphic for $ z \in \CC $, $ | z| < \delta_1 $.
Hence, 
\[  P ( z ) = P ( \bar z )^* \,, \ \ |z| < \delta_1 \,, \ \ 
( P ( z ) - i )^{-1}  :  {\mathcal H} \longrightarrow {\mathcal D} \,, \ \
\Im z = 0 \,, \  |z| < \delta_1 \,.  \]
This
implies boundedness in a complex neighbourhood, since
$ P ( z ) - P ( \Re z ) = {\mathcal O} ( |\Im z | ) \; : \;
{\mathcal D } \rightarrow {\mathcal H} $:
\begin{equation}
\label{eq:nasty}
( P ( z ) - i )^{-1}  :  {\mathcal H} \longrightarrow {\mathcal D} \,, \ \
\text{for $ |z| < \delta_2 $.}
\end{equation}

The assumption that
\[  \bbbone_0 ( P - i)^{-1} : {\mathcal H} \longrightarrow {\mathcal H} 
\ \ \text{ is a compact operator,} \]
and estimates in \S \ref{ep} 
imply that $ ( P ( z ) - i W )^{-1} : 
{\mathcal H} \rightarrow {\mathcal H} $ is meromorphic in 
$ D ( 0 , \delta_1) $. However we do not make this assumption and
prove the estimates on the resolvent directly. 

As stated in Theorem~\ref{th1}, we further make local assumptions near
the trapped set as follows: the symbol of $ \partial_z P ( 0 ) $ is strictly negative near $ p^{-1} ( 0 ) 
\cap T_{U_2 }^* X $, and
$ W \in \CI ( X ;\RR) $ satisfies 
\[ W \geq 0 \,, \ \ \  W \rest_{U_1 } = 0 \,, \ \ \ 
 W \rest_{ X \setminus U_2 } = 1 \,, \]
where $\pi(K) \subset \overline{U_1} \subset U_2 \Subset X.$  Our
dynamical assumptions near $K$ follow in the next section.

Finally we will consider the operator with complex absorbing potential
given by
$$
P(z)-iW
$$
where we define the operator $W$ by
$$
\bbbone_0 W u = W(x) u\rest_{X_0}
$$
with $W(x)$ a smooth function equal to $0$ on $U_1$ and $1$ on $X_0
\backslash U_2,$ and 
$$
\bbbone_1 Wu= \bbbone_1 u.
$$

\subsection{Dynamical assumptions}
\label{ind}

We now discuss the dynamical hypotheses for Theorem~\ref{th1}.  We
first state the minimal hypotheses needed for the proof of the theorem
to apply.

Let $\varphi^t$ denote the flow generated by the Hamilton vector field $\hamvf_p.$
Let $r$ denote the distance function to a fixed point in $X$ and
locally define the backward/forward trapped sets by:
$$
\Gamma_{\pm}=\{\rho\in \pi^{-1}(U_2): \lim_{t \to \mp\infty} r(\varphi^t(\rho)) \neq \infty\}.
$$
Let $\Gamma_\pm^\lambda=\Gamma_\pm\cap p^{-1}(\lambda).$  We can then
define the \emph{trapped set}
$$
K=\Gamma_+\cap \Gamma_-
$$
and let $K_\lambda=K\cap p^{-1}(\lambda).$

\begin{dyn}

\mbox{}

\begin{enumerate}
\item There exists $\delta>0$ such that $dp \neq 0$ on $p^{-1}(\lambda)$ for $\abs{\lambda}<\delta.$

\item \label{gammas}$\Gamma_\pm$ are codimension-one 
smooth
manifolds intersecting transversely at $K.$ (It
is not difficult to verify that $\Gamma_\pm$ must then be coisotropic and
$K$ symplectic.)

\item \label{normflow} The flow is hyperbolic in the normal directions
to $K$ within the energy surface: there exist subbundles\footnote{The
  bundles $E^\pm$ may of course depend on $\lambda$ but we omit this
  dependence from the notation.} $E^\pm$ of
$T_{K_\lambda} (\Gamma_\pm^\lambda)$ such that 
$$
T_{K_\lambda} \Gamma_\pm^\lambda=T K_\lambda \oplus E^\pm,
$$
where $$d\varphi^t:E^\pm \to E^\pm$$ and
there exists $\theta>0$ such that for all $\abs{\lambda}<\delta,$
\begin{equation}
\label{eq:normh}
\| {d\varphi^t(v)} \|  \leq C e^{-\theta | t | } \| {v} \| \
\text{for all } v \in E^{\mp},\ \pm t \geq 0.
\end{equation}
\end{enumerate}
\end{dyn}

These assumptions can be verified directly for the trapped set of a
slowly rotating Kerr black hole (i.e.\ when $a$ is small) but they are
not stable under perturbations, hence do not obviously apply to
perturbations of Kerr.  However, we will show that Kerr in fact satisfies
a more stringent (and well-studied) hypothesis that \emph{is} stable
under perturbation, and that implies the Dynamical Hypotheses above.
In particular, the standard dynamical notion of
\emph{$r$-normal hyperbolicity} implies items \eqref{gammas} and \eqref{normflow}, and
\emph{is} stable under perturbations, modulo possible loss of derivatives:

Recall that the flow in the energy surface $p^{-1}(\lambda)$ near $K_\lambda$ is \emph{eventually absolutely $r$-normally
  hyperbolic} for every $r$ in the sense of \cite[Definition
4]{Hirsch-Pugh-Shub} if its time-one flow is a $\mathcal{C}^r$ map
preserving a manifold $\mathcal{K}_\lambda$ (which a priori need only
lie $\mathcal{C}^1$ but is then automatically in $\mathcal{C}^r$)
such that for all $ \rho \in K_\lambda$, there exists a splitting of the tangent
bundle into subbundles stable under the flow
$$
T_\rho p^{-1} ( \lambda)  = T_\rho K_\lambda \oplus E^+_\rho \oplus E^-_\rho
\,, \ \ \ 
d \varphi^t_\rho(E^\pm_\rho) = E^\pm_{\varphi^t(\rho)} \,,  
$$
and for each $r \in \NN$ there exist $\theta_0>0$ and $C>0$ (both depending on
$r$) such that for $t>0,$
\begin{equation}\label{rnormhyp}\begin{aligned}
 \sup_{\rho \in K_\lambda} \norm{d\varphi_\rho^t\rest_{TK_\lambda}}^r
 &\leq C e^{-t\theta_0} \inf_{\rho\in K_\lambda}
\norm{d\varphi_\rho^{-t}\rest_{E^+}}^{-1},\\
\inf_{\rho \in K_\lambda} \norm{d\varphi_\rho^{-t}\rest_{TK_\lambda}}^{-r}
&\geq C^{-1}e^{t \theta_0} \sup_{\rho \in K_\lambda} \norm{d\varphi_\rho^t\rest_{E^-}}
\end{aligned}
\end{equation}
with $\norm{\bullet}$ some (indeed, any) fixed Finsler metric.
This assumption thus entails not merely that there is expansion and contraction in the
normal direction to $K$ but also that this expansion/contraction is
considerably \emph{stronger} than any expansion and contraction occuring in the flow on
$K$ itself. 
We remark that one may easily check that \eqref{rnormhyp} is stronger than
\eqref{eq:normh} by noting that since $\varphi^t$ are all
diffeomorphisms, fixing a Riemannian metric gives $$\sup_{\rho \in TK_\lambda}
\norm{d\varphi^t(\rho)} \geq 1$$ for all $t,$ hence, for instance the
first line of \eqref{rnormhyp} gives the estimate \eqref{eq:normh}
for the bundle $E^+.$

We may replace hypotheses \eqref{gammas} and \eqref{normflow} with the
assumption that for $\abs{\lambda}<\delta,$ the trapped set
$K_\lambda$ has the property that the flow near it in
$p^{-1}(\lambda)$ is eventually absolutely $r$-normally hyperbolic for
every $r$.
The existence of manifolds
$\Gamma_\pm$ tangent to $E^\pm$ and satisfying the Dynamical
Hypotheses, as well as the structural stability of these assumptions,
are classical theorems of Fenichel \cite{Fenichel} and
Hirsch-Pugh-Shub \cite{Hirsch-Pugh-Shub}.  The resulting perturbed
stable/unstable and trapped manifolds are only finitely differentiable
in general, as $r$-normal hyperbolicity for each $r$ is the
structurally stable property, and this only entails $\mathcal{C}^r$
regularity; on the other hand this $r$ can be chosen as large as
desired.  While we stated the theorems above with $\mathcal{C}^\infty$
hypotheses for simplicity, it is manifest from the proofs that the
hypotheses could be reduced to insisting that $\Gamma_\pm$ be in
$\mathcal{C}^K$ for sufficiently large $K$, hence those results apply
to the perturbed trapped sets arising here.

Thus once we show in the following section that the trapped set for
Kerr satisfies the $r$-normal hyperbolicity assumptions, we will know
that perturbations of Kerr continue to satisfy the Dynamical
Hypotheses, with as much differentiability as is required.

\section{Trapping for Kerr black holes}
\label{inb}

The hypotheses in the preceding sections are motivated by the example
of the slowly rotating Kerr black hole.  In this family of examples, describing the
geometry of a rotating black hole, the structure of the trapped set is
as described above, while the global structure of the spacetime is
more complex. The proof that the Kerr trapped set is $r$-normally 
hyperbolic might be a new contribution. 

We now recall the Kerr geometry, and verify that the hypotheses from
the preceding section hold in a spatial neighbourhood of the trapped
set, at least for small values of the parameter $a$ describing the
angular momentum per unit mass of the black hole.

The Kerr metric is a metric given in
``Boyer-Lindquist'' coordinates by
$$
g=\frac{\Delta}{\rho^2}\big(dt - a \sin^2\theta d\varphi\big)^2-\rho^2
\big(\frac{dr^2}{\Delta}+d\theta^2\big) -\frac{\sin^2 \theta}{\rho^2}
\big( a dt -(r^2+a^2) \, d\varphi\big)^2,
$$
with
$$
\rho^2=r^2 +a^2 \cos^2 \theta,
$$
$$
\Delta=r^2-2Mr+a^2.
$$
We study this metric on $\RR\times (r_+,\infty)\times S^2$ with
$$
r_+ \defeq M+(M^2-a^2)^{1/2}.
$$
in this region, outside the ``event horizon'' $r=r_+,$ the metric is a nonsingular
Lorentzian metric.
The parameter $a \in [0,M)$ is the rotational parameter (angular
momentum per unit mass), and $M$ is the mass.
When $a=0$ we have spherical symmetry, and the Kerr metric
reduces to the Schwarzschild metric.

The d'Alembertian in the Kerr metric is given by
$$
\Box =  \big(\frac{(r^2+a^2)^2}{\Delta} - a^2 \sin^2 \theta \big)
\pa_t^2 + \frac{4 Mar}{\Delta} \pa_t \pa_\varphi +
\big(\frac{a^2}{\Delta}-\frac{1}{\sin^2 \theta} \big) \pa_\varphi^2 -\pa_r
\Delta \pa_r -\frac{1}{\sin\theta}\pa_\theta \sin \theta \pa_\theta.
$$

\renewcommand\thefootnote{\dag}%

Thus, setting $\Box u=0,$ if $u$ is of the form $e^{iE
  t}v_E(r,\theta,\varphi),$ we find that $v_E$ satisfies $ P_E v_E = 0 $,
where $ P_E $ is given by 
$$
 -E^2 \big(\frac{(r^2+a^2)^2}{\Delta} - a^2 \sin^2 \theta \big)
 + iE \frac{4 Mar}{\Delta} \pa_\varphi -
\big(-\frac{a^2}{\Delta}+\frac{1}{\sin^2 \theta}\big) \pa_\varphi^2 -\pa_r
\Delta \pa_r -\frac{1}{\sin\theta}\pa_\theta \sin \theta \pa_\theta
\,. 
$$
Setting $E=(1+hw)/h$ (and dropping the subscript on $v$) we have
\begin{multline}
\bigg( (1+2hw) \big(-\frac{(r^2+a^2)^2}{\Delta} + a^2 \sin^2 \theta \big)
 - (1+hw)\frac{4 Mar}{\Delta} hD_\varphi \\+
\big(-\frac{a^2}{\Delta}+\frac{1}{\sin^2 \theta}\big) (hD_\varphi)^2 +(hD_r)
\Delta (hD_r) +\frac{1}{\sin\theta}(h D_\theta) \sin \theta (hD_\theta)
+O(h^2) \bigg) v=0.
\end{multline}
Thus, if we set
\begin{multline}
\widetilde{P}=\big(-\frac{(r^2+a^2)^2}{\Delta} + a^2 \sin^2 \theta \big)
 -\frac{4 Mar}{\Delta} (hD_\varphi) \\+
\big(-\frac{a^2}{\Delta}+\frac{1}{\sin^2 \theta}\big) (hD_\varphi)^2 +(hD_r)
\Delta (hD_r) +\frac{1}{\sin\theta}(hD_\theta) \sin \theta (hD_\theta)+O(h^2)
\end{multline}
and
$$
\widetilde{Q}=2\big(\frac{(r^2+a^2)^2}{\Delta} - a^2 \sin^2 \theta \big)+\frac{4 Mar}{\Delta} (hD_\varphi),
$$
we are dealing with the equation
$$
(\widetilde{P}-hw \widetilde{Q})u=0.
$$
The operator $\widetilde{P}$ has disagreeable asymptotics near
the ends $r=r_+,\infty,$ however; we thus choose to
multiply our equation through by ${\Delta}/{r^4}$.
Thus, we let $P=(\Delta/r^4) \widetilde{P}$ and $Q=(\Delta/r^4) \widetilde{Q},$ so that
\begin{multline}
P=\big(-\frac{(r^2+a^2)^2}{r^4} + \frac{a^2\Delta}{r^4} \sin^2 \theta \big)
 -\frac{4 Ma}{r^3} (hD_\varphi) \\+
\big(-\frac{a^2}{r^4}+\frac{\Delta}{r^4 \sin^2 \theta}\big)
(hD_\varphi)^2 +\frac{\Delta}{r^4} (hD_r)
\Delta (hD_r) +\frac{\Delta}{r^4\sin\theta}(hD_\theta) \sin \theta (hD_\theta)+O(h^2)
\end{multline}
and
$$
Q=2\big(\frac{(r^2+a^2)^2}{r^4} - \frac{\Delta}{r^4} a^2 \sin^2 \theta \big)+\frac{4 Ma}{r^3} (hD_\varphi),
$$
and we are now interested to solutions of $P(z)u=0,$ with
$$
P(z)=P-zQ,\ z=hw.
$$
We are in the situation covered by Theorem~\ref{th1}
provided that we can verify the hypotheses on $P$ and $P'(0)=-Q.$
We note that $P$ and $Q$ are now self-adjoint with respect to the volume form
$$
\frac{r^4}{\Delta} \sqrt{\abs{g}} \, dr\, d\theta \, d\varphi.
$$
To see this, we write
$$
\widetilde{P}=\big(-\frac{(r^2+a^2)^2}{r^4} + \frac{a^2}{\Delta}{r^4} \sin^2 \theta \big)
 -\frac{4 Ma}{r^3} (hD_\varphi)+ P'
$$
where $P'$ is our original, formally self-adjoint operator $\Box,$
applied to functions independent of the $t$ variable (i.e.\ on the
quotient of the spacetime by the $\pa_t$ flow); the
$D_\varphi$ terms are self-adjoint by axial symmetry of $g.$

The hypotheses are, we claim,
satisfied in a subset $\{r>r_0\}$ (for some $r_0>r_+$) that includes the
trapped set and the $r\to+\infty$ end.  The hypotheses are not
globally satisfied, however, owing to the structure of $P$ near the
event horizon: not only is this end not asymptotically Euclidean, but
the operator $P$ is not even elliptic in a uniform neighbourhood of
$r=r_+:$ inside the ``ergosphere'' where
$$-\frac{a^2}{\Delta}+\frac{1}{\sin^2 \theta}<0,$$ $P$ is not elliptic
(i.e.\ the Killing vector field $\pa_t$ for the Kerr metric fails to
be timelike).  Thus we do not at this time know how to fit the global
structure of the Kerr metric into the assumptions made in \S\ref{ina};
for the moment we would instead have to consider a Kerr metric glued
to a Euclidean end in place of the $r\to r_+$ end.

In what follows, we verify that the structure of the Kerr trapped set, at least, is of
the desired form.
Letting
$$
\xi\,  dr + \alpha \, d\theta + \beta \, d\varphi
$$
denote the canonical one-form on $T^*X,$ we find that the
semiclassical principal symbol of $\widetilde{P}=(r^4/\Delta) P$ is\footnote{In our
  analysis of the null bicharacteristics, we study the operator
  $(r^4/\Delta)P$, which of course has no effect on the dynamics on
  $K_\lambda$.}
\begin{equation}\label{p}
p=\Delta\xi^2 +\alpha^2 + \big( \frac 1{\sin^2\theta} -
\frac{a^2}{\Delta}\big) \beta^2 - \frac{4Mar}{\Delta} \beta - \big(
\frac{(r^2+a^2)^2}{\Delta} -a^2 \sin^2 \theta\big)
\end{equation}
and the Hamilton vector field is given by
\begin{multline}\label{hamvf}
(1/2)\hamvf =\xi \Delta \pa_r + \alpha \pa_\theta- \frac{\big(a (a\beta + 2M
  r)-\beta \Delta\csc^2 \theta \big)}{\Delta} \pa_\varphi \\+ \big( \beta^2
\cot \theta \csc^2 \theta -a^2 \sin \theta \cos \theta \big)
\pa_\alpha\\ + \bigg((M-r)\xi^2+\frac{\big( a \beta (M-r) + r \Delta + M (a^2-r^2) \big)
  (a \beta + (a^2 +r^2) )}{\Delta^2}\bigg) \pa_\xi.
\end{multline}
We note (following Carter \cite{Carter}) that the quantities
$$
p,\ \beta,\ \text{and } \sK= \alpha^2 + \big(a \sin \theta - \frac
\beta{\sin \theta} \big)^2 
$$
are all conserved under the $\hamvf$-flow, and in involution, both on
\emph{and off} the energy surface $\{p=0\}.$

Under the $\hamvf$-flow, for each fixed $\beta,$ the sets of variables
$(\theta, \alpha)$ and $(r,\xi)$ evolve autonomously, with $\sK$
describing a conserved quantity in the $(\theta,\alpha)$ plane.
This demonstrates that the motion in the $(\theta,\alpha)$ variables is periodic.
Also,
$$
\sK-p = -2a \beta -\Delta\xi^2+ \frac{a^2 \beta^2+ 4Mar\beta + (r^2+a^2)^2}{\Delta}
$$
is conserved and (for $\beta$ fixed) dependent solely on $(r,\xi).$
This last observation means that in fact under the rescaled flow,
generated by $(1/2\Delta)\hamvf$, the
quantity
$$
-\dot r^2-2a \beta+ \frac{a^2 \beta^2+ 4Mar\beta + (r^2+a^2)^2}{\Delta}
$$
is constant.  For $a=0,$ this quantity is simply 
$$
-\dot r^2+\frac{r^4}{\Delta}.
$$
The ``potential'' $-\frac{r^4}{\Delta}$ has a nondegenerate local
maximum at $r=3M;$ this is its only critical point outside the event
horizon.  Thus this rescaled flow tends to $r=+\infty$ or $r=r_+$ except when
$r=3M,$ where it has an (unstable) invariant set $(r=3M, \xi=0).$
More generally, for $a$ small, the structure is more or less the same:
for each given $\beta,$ there is a unique local maximum of the
potential
$$
v_\beta(r)=2a \beta- \frac{a^2 \beta^2+ 4Mar\beta + (r^2+a^2)^2}{\Delta}
$$
outside $r=r_+.$ Thus, the trapped set $K$ consists of a family of
orbits on which $r=r(\beta), \xi=0,$ with $r(\beta)$ given by the
critical point of $v_\beta$ in the exterior of the black hole.  The
invariance of $p$ and $\beta$ on the four dimensional trapped set
$r=r(\beta), \xi=0$ with coordinates $(\theta,\varphi,\alpha,\beta)$
yields the desired integrability.  (Note that $p$ and $\beta$ are
manifestly in involution.)

To verify the hypothesis \eqref{eq:normh}, we note that since the
center manifold is given by $r=r(\beta),\xi=0,$ we need only verify
that the flow in $r,\xi$ is hyperbolic near these points.  The
linearization of this flow is simply
$$
\begin{pmatrix}
0 & \Delta(r) \\
B'(r) & 0
\end{pmatrix},
$$
where, by \eqref{hamvf},
$$
B(r)=\frac{\big( a \beta (M-r) + r \Delta + M (a^2-r^2) \big)
  (a \beta + (a^2 +r^2) )}{\Delta^2}
$$
The positivity of $B'(r)$ at $r=r(\beta)$ is equivalent to the
positivity of $A'(r),$ where
$$
A(r) = \big( a \beta (M-r) + r \Delta + M (a^2-r^2) \big)
  (a \beta + (a^2 +r^2)).
$$
When $a=0,$ strict positivity is easily verified at $r=r(\beta)=3M;$
again by perturbation, it persists for small $a.$

We note that in the special case of the Schwarzschild metric ($a=0$)
we can simply compute from \eqref{hamvf} that at the trapped set $r=3M,$ $\xi=0:$
$$
\begin{pmatrix}
(r-3M)' \\ \xi' 
\end{pmatrix}
= \begin{pmatrix}
0 & 3M^2\\ 9 & 0
\end{pmatrix}
\begin{pmatrix}
r-3M\\ \xi
\end{pmatrix}+ \mathcal{O}((r-3M)^2 +\xi^2),
$$
where primes denote derivatives under the flow generated by
$(1/2)\hamvf.$  Thus the unstable Liapunov exponent under the
$\hamvf$-flow is $6\sqrt{3} M.$

For any given $\beta,$ let $\gamma^\pm_\beta$ denote the subsets of
$\RR^2_{r,\xi}$ given by the stable and unstable manifolds of the
fixed point $(r=r(\beta),\xi=0).$  As $\beta$ is conserved under the
flow, the fibration
$$\{(r,\xi,\theta,\varphi,\alpha,\beta): (r,\xi) \in \gamma^\pm_\beta\}
\mapsto (r=r(\beta),\xi=0,\theta,\varphi,\alpha,\beta)$$
gives smooth fibrations of the stable and unstable manifolds of
the flow.  (The fibration is conserved under the flow since
$\gamma^\pm$ and $\beta$ are.)

To check the hypotheses on $Q=-P'(0),$ we note that
$$
\sigma(\widetilde Q) + p = \big(\frac{(r^2+a^2)^2}{\Delta} - a^2 \sin^2 \theta\big) +
\big(-\frac{a^2}{\Delta}+\frac{1}{\sin^2 \theta}\big) \beta^2+\text{
  nonnegative terms}.
$$
The first term on the right is bounded below by
$$
\frac{r^4 +a^2 r^2 + 2 M a^2 r}{\Delta}
$$
while the second is bounded below by
\begin{equation}\label{lowerbound}
\beta^2 \frac{r^2-2Mr}\Delta
\end{equation}
hence we obtain the positivity of $\sigma(\widetilde Q)$ (hence
negativity of $P'(0)$) in a spatial neighbourhood of the trapped set,
provided $a$ is not too large; recall that for $a=0,$ the trapped
set lies over $r=3M,$ where the latter term in \eqref{lowerbound} is
safely positive.

We now show that the hypotheses of Theorem~\ref{th1} are indeed
satisfied near the trapped set not just for the slowly rotating Kerr
metric itself, but for \emph{smooth perturbations} of such Kerr
metrics.  The crucial observation is that for $a$ small, the Kerr
metric is $r$-normally hyperbolic for every $r,$ and that these
properties are structurally stable, so that an invariant manifold
diffeomorphic to $S^*(S^2)$ persists, with the flow near it remaining
normally hyperbolic.  We recall that the perturbed trapped set may
cease to be infinitely differentiable: for any $r,$ a sufficiently
small perturbation gives a trapped set in $\mathcal{C}^r,$ but the
required perturbation size may shrink as $r\to\infty.$ In practice
this need not concern us, as the proof of Theorem~\ref{th1} only uses
a finite (albeit unspecified) number of derivatives.

\begin{prop}\label{normhyp}
  For $a$ sufficiently small, there exists a neighbourhood of $K,$ such
  that the flow generated by $\hamvf$ is $r$-normally hyperbolic for each
  $r,$ i.e.\ satisfies \eqref{rnormhyp}.  Hence, by the results of
  \cite{Hirsch-Pugh-Shub}, for each $r,$ any sufficiently small perturbation of the
  Kerr metric also gives rise to an $r$-normally hyperbolic
  trapped set (in $\mathcal{C}^r$) satisfying the hypotheses of \S\ref{ind}.
\end{prop}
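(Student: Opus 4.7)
The structural stability assertion follows from \cite{Hirsch-Pugh-Shub} once eventual absolute $r$-normal hyperbolicity \eqref{rnormhyp} is established for every $r$, so the task reduces to verifying those two estimates. The strategy is to combine two facts: the tangential dynamics on $K_\lambda$ is completely integrable in the Liouville--Arnold sense, forcing tangential derivatives of the flow to grow at most polynomially in $t$; while the normal dynamics is uniformly exponentially hyperbolic, as already checked above. A polynomial-over-exponential comparison then gives \eqref{rnormhyp} for arbitrary $r$.

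First, I would record that the three Carter integrals $p,\beta,\sK$ are in involution and conserved for all values of $a$, not only for $a=0$.  Since $K$ is the $4$-dimensional locus $\{r=r(\beta),\xi=0\}$ and $K_\lambda=K\cap p^{-1}(\lambda)$ is $3$-dimensional, the two conserved quantities $\beta,\sK$ form a complete set of Liouville integrals on $K_\lambda$.  By the Arnold--Liouville theorem the compact components of common level sets $\{\beta=b,\sK=k\}$ are invariant tori supporting quasi-periodic motion, so in action-angle coordinates $\varphi^t(I,\vartheta)=(I,\vartheta+\omega(I)t)$.  Differentiating this yields the uniform polynomial bound
\[
\sup_{\rho\in K_\lambda}\|d\varphi_\rho^t\rest_{T K_\lambda}\| \;\leq\; C(1+|t|),
\]
whence $\sup\|d\varphi^t\rest_{TK_\lambda}\|^r \lesssim (1+|t|)^r$ for every fixed $r$.

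Next, the normal hyperbolicity with a uniform rate comes directly from the linearization carried out earlier: at a point $\rho=(r(\beta),\xi=0)$ the transverse $2\times 2$ matrix has entries $\Delta(r(\beta))>0$ and $B'(r(\beta))$, whose product is strictly positive and bounded away from zero on the compact $K_\lambda$ for $a$ small, by continuity from the explicit value at $a=0$, $r=3M$.  This yields a uniform Lyapunov exponent $\theta>0$ and a hyperbolic splitting $E^+\oplus E^-$ with
\[
\|d\varphi_\rho^t\rest_{E^-}\|\le c\,e^{-\theta t},\qquad \|d\varphi_\rho^{-t}\rest_{E^+}\|^{-1}\ge c\,e^{\theta t},\qquad t\ge 0,
\]
uniformly on $K_\lambda$.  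Choosing any $\theta_0\in(0,\theta)$, the polynomial factor $(1+|t|)^r$ is eventually dominated by $e^{(\theta-\theta_0)t}$, which verifies the first line of \eqref{rnormhyp}; the second line follows by swapping $t$ for $-t$ and $E^+$ for $E^-$.  The structural stability and $\mathcal{C}^r$ regularity of the perturbed invariant manifold for each $r$ then follow directly from \cite{Hirsch-Pugh-Shub}.

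The step I expect to require the most care is the polynomial bound on tangential derivatives near \emph{singular} tori, where $d\beta\wedge d\sK$ degenerates (e.g.\ along circular photon orbits with $\alpha\equiv 0$).  Action-angle coordinates break down there, so the cleanest route is to exploit the explicit autonomous $(\theta,\alpha)$ motion at fixed $\beta$---which is manifestly periodic with period depending smoothly on $(\beta,\sK)$---and differentiate the period-parameterized flow directly.  This avoids any reliance on global action-angle charts while still yielding the required linear-in-$t$ bound uniformly over $K_\lambda$.
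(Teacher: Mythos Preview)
Your proposal is correct and follows the same overall strategy as the paper: establish linear-in-$t$ growth for $d\varphi^t\rest_{TK_\lambda}$ via complete integrability, then combine this polynomial bound with the already-computed uniform exponential normal rates to obtain \eqref{rnormhyp} for every $r$.

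The main difference lies in the treatment of the degenerate tori, which you correctly flag as the delicate step. The paper uses only $p$ and $\beta$ as integrals (not $\sK$), identifies the degenerate loci as the equatorial orbits $\alpha=0,\ \theta=\pi/2$ where $dp$ and $d\beta$ become linearly dependent on $K$, and then checks by direct computation at $a=0$ (extended to small $a$ by continuity of second partials) that $\beta\rest_{K\cap p^{-1}(\lambda)}$ has Morse--Bott nondegenerate extrema there. This nondegeneracy lets the paper invoke the standard picture of an invariant circle surrounded by shrinking tori, with the Poincar\'e return map a twist map preserving $\beta$, giving linear growth near the degenerate torus as well. Your alternative---exploiting the autonomous $(\theta,\alpha)$ motion at fixed $\beta$---would also work, but to make ``period depending smoothly on $(\beta,\sK)$'' precise near the equatorial orbit you still need the center $(\theta,\alpha)=(\pi/2,0)$ to be nondegenerate, which is exactly the Morse--Bott computation the paper carries out. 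So in practice the two routes converge on the same verification; the paper's twist-map argument is just a slightly more packaged way of extracting the linear bound once that nondegeneracy is in hand.
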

\begin{proof}
We have verified above that $d\varphi^t\rest_{E^\pm}$ satisfies
$$
\| {d\varphi^t_\rho(v)} \|  \leq C e^{-\theta | t | } \| {v} \| \
\text{for all } v \in E^{\mp}_\rho,\ \pm t \geq 0,
$$
for some $\theta>0.$  To further verify \eqref{rnormhyp} we also require
estimates on $d\varphi^t\rest_{TK}.$ Recall that the flow on $K$ is
integrable for the simple reason that $p$ and $\beta$ are both
conserved (i.e.\ we only use axial symmetry here, not preservation of
$\sK$ as well).  Fixing the values of $p,\beta$ foliates $K$ into
invariant tori on which the flow is necessarily quasi-periodic.  As a
consequence of the quasi-periodicity, away from any possible
degenerate tori, we have action-angle variable $(I_1,\dots,I_n) \in
\RR^n,$ $(\theta_1,\dots \theta_n) \in (S^1)^n$ such
that $\hamvf=\sum \omega_j(I) \pa_{\theta_j},$ hence
$$
d(\varphi^t(\rho),\varphi^t(\rho'))^2 \sim \sum (I_j-I'_j)^2+
\big(\theta_j-\theta'_j + \big(\omega_j(I)-\omega_j(I')\big) t\big)^2 \lesssim d(\rho,\rho')^2(1+\ang{t}^2).
$$
Thus,
$$
\norm{d\varphi^t\rest_{TK}} \leq C \ang{t}.
$$

Near degenerate invariant tori, this argument breaks down, and could
in principle fail (e.g.\ there can be hyperbolic closed orbits on
surfaces of rotation).  However we claim that the same estimate in
fact holds globally on $K;$ it thus remains to check it near
degenerate tori.  Restricting $p$ given by \eqref{p} to the trapped
set, where $\xi=0$ and $r=r(\beta),$ we find that $dp$ and $d\beta$
are linearly dependent only at $\alpha=0,\ \theta=\pi/2,$ i.e.\ at the
equatorial orbits.  (A separate computation shows that orbits passing
through the poles, i.e.\ with $\beta=0$ are \emph{not} degenerate,
even though the coordinate system employed here is not valid near the
poles.)  Put another way, the functions $\beta$ restricted to the set
$K\cap p^{-1}(\lambda)$ has its only critical points along the set
$\alpha=0,\ \theta=\pi/2,\varphi \in S^1.$ In the case of the
Schwarzschild metric ($a=0$), there are two values of $\beta$ at which
this can occur, $\pm (E+r^2/\Delta)$ and they are respectively maxima
and minima nondegenerate in the sense of Morse-Bott.  In particular,
we may use coordinates $\alpha,\theta,\varphi$ on $K\cap
p^{-1}(\lambda),$ and for the Schwarzschild case, $K=\{r=3M\}$ and
$$
\beta=\pm \sin \theta \big(\lambda +\frac{r^4}{\Delta}-\alpha^2\big)^{1/2},
$$
hence at the critical manifold $\theta=\pi/2,\alpha=0$ we compute
$$
\beta''_{\alpha\alpha}=\mp (\lambda+27M^2)^{-1/2},\ \beta''_{\theta\theta}=\mp(\lambda +27M^2)^{1/2},\ \beta''_{\alpha\theta}=0.
$$
This establishes nondegeneracy, which extends by continuity of second
partial derivatives for the Kerr case when $a$ is small.

The behavior of an invariant torus in a three-dimensional energy surface
near a Morse-Bott maximum or minimum of a conserved quantity is well
understood (see, e.g.\ \cite{BMF}): it must be an invariant circle
surrounded by nondegenerate invariant tori shrinking down to it; in
particular, if $\beta$ takes on a maximum values $\beta_M,$ along an
equatorial orbit, then any sufficiently nearby orbit is constrained to lie for all
time in $\beta^{-1}((\beta_M-\epsilon,\beta_M)),$ and this is a solid
torus $S^1_\varphi\times B^2$ in the energy space surrounding the equatorial orbit $S^1_\varphi,$ whose
diameter can be made as small as desired by shrinking $\ep \to 0.$
Taking a cross section of this solid torus, we observe that the
Poincar\'e return map is thus a \emph{twist map} preserving the value
of $\beta,$ under whose
iterations the distances between points grows linearly in time.
Additionally, we of course have $\varphi'=\beta$ along the flow, so the
difference between $\beta$ values can grow at worst linearly along the
orbit.  Thus, we again obtain linear growth of distances along the orbit,
hence $d\varphi^t\rest_{TK}$ grows
at most linearly.  This implies \eqref{rnormhyp} for every $r.$
\end{proof}

We have thus established the dynamical hypotheses for the Hamilton
vector field $H,$ associated to $p.$ As $z \in \RR$ varies, this is
not all of the real part of the symbol of $P-zQ;$ by structural
stability, however, the hypotheses persist for the principal symbol of
$P-zQ$ for $z\in \RR$ sufficiently small.

Finally, we observe that in the the end of the manifold $r\to
+\infty,$ the assumptions on $P(z)$ can be routinely verified by use
of the \emph{semi-classical scattering calculus} of pseudodifferential
operators \cite{WZ}, as $P(z)-iW$ is elliptic in that setting.

\section{Analytic preliminaries}
\label{pr}

In this section we recall facts from semiclassical analysis
referring to \cite{DiSj} and \cite{EZB} for background material.

\subsection{Semiclassical calculus}
\label{sc}
Because of our assumptions, except in \S \ref{rere}, we will
only use semiclassical calculus on a compact manifold.
Thus, let $ X $ be a $\CI$ manifold which agrees with $ \RR^n $ outside a
compact set, or more generally has finitely many ends diffeomorphic to $\RR^n:$
\begin{equation}
\label{eq:X}
  X = 
X_0 \sqcup X_1 \sqcup \cdots \sqcup 
X_N \,, \text{ where } X_j = \RR^n \setminus B ( 0 , R ) \text{ for } j>0,
\text{ and } \ X_0 \Subset X.
\end{equation}
We introduce the class of semiclassical 
symbols on $ X $ (see for instance \cite[\S 9.7]{EZB}):
\[ S^{m,k} ( T^* X ) = \{ a \in \CI( T^* X \times (0, 1]  ) :
|\partial_x ^{ \alpha } \partial _\xi^\beta a ( x, \xi ;h ) | \leq
C_{ \alpha, \beta} h^{ -k } \langle \xi \rangle^{ m - |\beta| } 
\} \,, \]
where outside $ X_0 $ we take the usual $ \RR^n $ coordinates in this 
definition.
The corresponding  class of pseudodifferential operators is denoted by 
$ \Psi_{h}^{m,k} ( X ) $, and we have the quantization and symbol maps:
\[ 
\begin{split}
  & \Op \; : \; S^{m , k } ( T^* X ) \ \longrightarrow 
  \Psi^{m,k}_h ( X) \\
  & \sigma_h \; : \; \Psi_h^{m,k} ( X ) \ \longrightarrow 
  S^{ m , k } ( T^* X ) / S^{ m-1, k-1} ( T^* X ) \,, \end{split}
\]
with both maps  surjective,  and the usual properties 
\begin{gather}
\label{eq:psym}
\begin{gathered}
 \sigma_h ( A \circ B ) = \sigma_h ( A)\sigma _h ( B ) \,,\\
 0 \rightarrow \Psi^{m-1, k-1} ( X) \hookrightarrow \Psi^{m, k} ( X)
\stackrel{\sigma_h}{\rightarrow} S^{ m , k } ( T^* X ) 
/ S^{ m-1, k-1} ( T^* X ) \rightarrow 0 \,, 
\end{gathered}
\end{gather}
a short exact sequence, and
\[ \sigma_h \circ \Op : S^{m,k} ( T^* X ) 
\ \longrightarrow  S^{ m , k } ( T^* X ) / S^{ m-1, k-1} ( T^* X ) \,,
\]
the natural projection map.
The class of operators and the quantization map are defined locally using the
definition on $ \RR^n $:
\begin{equation}
  \label{eq:weyl}
  \Op (a) u ( x) = \frac{1}{ ( 2 \pi h )^n } 
  \iint  a \left( \frac{x + y }{2}  , \xi \right
  ) e^{ i \langle x -  y, \xi \rangle / h } u ( y ) 
dy d \xi \,. \end{equation}
  We remark only that when we consider the operators
  acting on half-densities we can define 
  the symbol map, $ \sigma_h $, onto
  $$  S^{ m , k } ( T^* X ) / S^{ m-2, k-2} ( T^* X ) \,. $$ 
We keep this in mind but 
for notational simplicity we suppress the half-density notation. 

For future reference, and to illustrate the uses of the calculus,
we present the following application:
\begin{prop}
\label{p:work}
Suppose $ P \in \Psi^{2,0}_h  ( X ) $ satisfies 
$ P = p ( x, h D ) + h p_1 ( x , h D ; h ) $, $ p_1 \in \Psi^{2,0}_h $,
$ p ( x, \xi ) \geq \langle \xi \rangle^2/C  - C $. 

\medskip
\noindent
(i)
Let $ \psi_j \in \CIb (T^*X ; [ 0 , 1 ] ) $, $ j = 1,2$,  satisfy
\[ \psi_j  = 1 \ \text{in } \ { p^{-1} ( [- j \delta, j \delta ]) ) }= 1 \,, \ \
\supp \psi_j \subset p^{-1} ( [ - (j+1/2) \delta, (j+1/2) \delta ] ) \,.
\]
Then there exists $ E_1 \in 
\Psi^{-2, 0}_h ( X ) $, 
such that  
\[  E_1 \circ P = I + R_1 \,, \ \  R_1 \in \Psi^{0,0}_h ( X ) \,, \]
and 
\[ ( 1 - \psi_2^w ( x , h D ) ) R_1 \in \Psi^{-\infty, -\infty}_h ( X )  \,,\ 
 \  \psi_1^w ( x, h D ) E_1 
\in \Psi^{-\infty, -\infty}_h ( X ) \,. \]

\medskip
\noindent
(ii) Suppose $ f \in \CI_b ( X , [0,1] ) $, satisfies 
$ f \equiv 1 $ on $ U \subset X $, $ U $ open. 
Then there exists
$ E_2 \in \Psi^{-2, 0}_h ( X ) $, 
such that  
\[  E_2 \circ (  P - i f ) = I + R_2 \,, \ \  R_2 \in \Psi^{0,0}_h ( X ) \,, \]
and 
\[ \chi R_2 \,, \ R_2 \chi \in \Psi_h^{-\infty, -\infty } ( X ) \,, \ \ 
\text{for any $ \chi \in \CIc ( X ) $, \ $ \supp \chi \Subset U $.}\]
\end{prop}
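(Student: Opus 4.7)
Both parts are standard semiclassical elliptic parametrix constructions: (i) inverts $P$ microlocally away from the energy surface $\{p=0\}$, and (ii) inverts $P-if$ microlocally over the set where $f\equiv 1$. In each case the plan is to choose a leading-order symbol, correct iteratively via the Moyal expansion, and Borel-sum the corrections.

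For (i), introduce an auxiliary cutoff $\eta=\tilde\eta\circ p$ with $\tilde\eta\in\CIc(\RR;[0,1])$ satisfying $\tilde\eta=1$ on $[-7\delta/4,7\delta/4]$ and $\supp\tilde\eta\subset[-15\delta/8,15\delta/8]$; this arranges $\supp\psi_1\subset\{\eta=1\}$ and $\supp\eta\subset\{\psi_2=1\}$, with strict gaps separating $\supp\eta$ from $\supp(1-\psi_2)$ and $\supp\psi_1$ from $\{\eta<1\}$. Set $e_0:=(1-\eta)/p$, smooth because $1-\eta$ vanishes on an open neighbourhood of $\{p=0\}$ and lying in $S^{-2,0}(T^*X)$ by the hypothesis $p\ge\langle\xi\rangle^2/C-C$. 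The Moyal expansion gives $\Op(e_0)P=I-\Op(\eta)+\sum_{k\ge 1}h^k\Op(s_k)$ with $s_k\in S^{-k,0}$. At stage $k\ge 1$, if the current $h^k$-order remainder symbol is $\tau_k$, set $e_k:=-(1-\eta)\tau_k/p\in S^{-2-k,0}$, smooth by the same argument and supported in $\{\eta<1\}$; the new $h^k$-order remainder becomes $\eta\tau_k$, supported in $\supp\eta$. Borel-summing the $h^ke_k$ yields $e\in S^{-2,0}$ supported in $\{\eta<1\}$, and $E_1:=\Op(e)$ satisfies: $R_1:=E_1P-I\in\Psi^{0,0}_h$ has symbol asymptotically supported in $\supp\eta$, disjoint from $\supp(1-\psi_2)$, so every Moyal term of $(1-\psi_2)\#\sigma(R_1)$ vanishes pointwise and $(1-\psi_2^w)R_1\in\Psi^{-\infty,-\infty}_h$; by the same argument $\supp e\subset\{\eta<1\}$ is disjoint from $\supp\psi_1$, giving $\psi_1^w E_1\in\Psi^{-\infty,-\infty}_h$.

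For (ii), the key observation is that $p-i$ is nowhere zero on $T^*X$ with $|p-i|\sim\langle\xi\rangle^2$ at infinity, so $e_0:=1/(p-i)\in S^{-2,0}(T^*X)$ is globally smooth. On $\pi^{-1}(U)$ one has $f\equiv 1$, whence $p-if=p-i$ and all derivatives of $f$ vanish identically on the open set $U$; hence the Moyal expansion of $e_0\#(p-if)$ restricted to $\pi^{-1}(U)$ reduces to that of $(1/(p-i))\#(p-i)$, with leading term $1$ and vanishing $h^1$ correction (since $\{1/(p-i),p-i\}=0$). Iterate: cancel the $h^k$-order remainder symbol $\tau_k$ on $\pi^{-1}(U)$ by adding $h^ke_k$ with $e_k:=-\tau_k/(p-i)\in S^{-2-k,0}$, using $(p-if)/(p-i)=1$ on $\pi^{-1}(U)$. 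Borel-summing gives $E_2=\Op(e)\in\Psi^{-2,0}_h$ with $R_2:=E_2(P-if)-I\in\Psi^{0,0}_h$ whose asymptotic symbol expansion vanishes identically on $\pi^{-1}(U)$. For any $\chi\in\CIc(X)$ with $\supp\chi\Subset U$, each Moyal term of $\chi\#\sigma(R_2)$ or $\sigma(R_2)\#\chi$ is a sum of pointwise products $\partial^\alpha\chi\cdot\partial^\beta\sigma(R_2)$, and since $\supp\partial^\alpha\chi\subset\pi^{-1}(U)$ while every partial derivative of every term in the asymptotic expansion of $\sigma(R_2)$ vanishes on the open set $\pi^{-1}(U)$, each product vanishes pointwise, yielding $\chi R_2,R_2\chi\in\Psi^{-\infty,-\infty}_h$. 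The main technicality is the standard Borel summation; the asymptotic cancellations themselves are purely algebraic.
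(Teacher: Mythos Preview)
Your argument is correct: both parts are carried out by the standard iterative parametrix construction in the semiclassical calculus, and your handling of the support/nesting conditions (via the auxiliary cutoff $\eta$ in (i), and via the local constancy of $f$ on $U$ in (ii)) is exactly what is needed to get the stated microlocal smallness of $R_1$, $E_1$, and $R_2$. The paper itself does not give a proof of this proposition; it is presented there as a routine consequence of the calculus (``to illustrate the uses of the calculus''), so your write-up supplies precisely the omitted details.
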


\subsection{$S_{\frac12} $ spaces with two parameters.}
\label{s2p}

As in \cite[\S 3.3]{SZ10} we define the following symbol class:
\begin{equation}
\label{eq:sthf}
a \in S^{m , {\widetilde m}, k }_{\frac12} ( T^* \RR^n ) 
\ \Longleftrightarrow \ |\partial_x^\alpha \partial^\beta_\xi 
a (  x, \xi ) | \leq C_{\alpha \beta} h^{-m } \tilde h^{-{\widetilde m}} 
\left( \frac{\tilde h }{ h } 
\right)^{\frac12 ( |\alpha | + |\beta| ) } \langle \xi \rangle^{k-|\beta|}
 \,,\end{equation}
where in the notation we suppress the dependence of $ a $ on $ h 
$ and $ \tilde h $.
When working on $ \RR^n $ or in fixed local coordinates we will
use a simpler class
\begin{equation}
\label{eq:sthfs}
a \in \ST  ( T^* \RR^n ) 
\ \Longleftrightarrow \ |\partial^\alpha
a | \leq C_{{\alpha}, N }  \left( {\tilde h }/{ h }
\right)^{\frac12  |\alpha | } \langle \xi \rangle^{-N} \,. \\
\end{equation}
Then standard results (see \cite[\S 9.3]{EZB}) show that if  
$ a \in   S^{m , {\widetilde m},k }_{\frac12}$ and
$ b \in   S^{m', \widetilde m', k'}_{\frac12} $ then 
\[ a ( x , h D_ x ) \circ b ( x , h D_ x ) = c ( x , h D_ x  ) 
\ \text{ with } \ 
c \in   S^{m +m', {\widetilde m}+ {\widetilde m}' , 
k+ k' }_{\frac12}\,.
\]
The presence of the additional parameter $ \tilde h $ allows us to 
conclude that 
\[ c \equiv \sum_{ |\alpha | <  M } \frac{1}{\alpha !} 
\partial_\xi^\alpha a D_x^\alpha b \ \mod S^{m  + m ' , {\widetilde m} + {\widetilde m}' 
- M , k + k' - M }_{\frac12} \,, \]
that is, we have a symbolic expansion in powers of $ \tilde h $. 
We denote our class of operators by $ \Psi_{\frac12}^{m , {\widetilde m}, k } 
(T^* \RR^n ) $, or in the case of symbols in $ \ST $, $ \PT $.

A standard rescaling shows that this class of pseudodifferential 
operators is essentially equivalent to the calculus with a new
Planck constant $ \tih$: put
\begin{equation}
\label{eq:resc}
  ( \tilde x , \tilde \xi ) = ( \tilde h / h )^{\frac12} ( x , \xi ) \,,
\end{equation}
and define the following unitary operator on $ L^2 ( \RR^n ) $:
\[ U_{ h , \tilde h } u ( \tilde x ) = ( \tilde h / h )^{\frac{n}{4}}
u ( ( h / \tilde h )^{\frac12}  \tilde x ) \,.\]
The one easily checks that
\[ a ( x , h D_ x ) = U_{ h , \tilde h }^{-1} a_{ h , \tilde h }
( \tilde x , \tilde h D_{\tilde x } ) U_{ h , \tilde h} \,, \ \
a_{ h , \tilde h } ( \tilde x , \tilde \xi ) = a (
( h / \tilde h)^{\frac12} ( \tilde x , \tilde \xi ) ) \,. \]
Clearly $ a $ satisfies \eqref{eq:sthfs} if and
only if
$ a_{ h, \tilde h } \in S ( T^* \RR^n ) $, with estimates uniform
with respect to $ h $ and $ \tih$.

We recall \cite[Lemma 3.6]{SZ10} which provides explicit 
error estimates on remainders.
\begin{lem}
\label{l:err}
Suppose that 
$ a, b \in \ST $, 
and that $ c^w = a^w \circ b^w $. 
Then 
\begin{equation}
\label{eq:weylc}  c ( x, \xi) = \sum_{k=0}^N \frac{1}{k!} \left( 
\frac{i h}{2} \sigma ( D_x , D_\xi; D_y , D_\eta) \right)^k a ( x , \xi) 
b( y , \eta) \rest_{ x = y , \xi = \eta} + e_N ( x, \xi ) \,,
\end{equation}
where for some $ M $
\begin{equation}
\label{eq:new1}
\begin{split}
& | \partial^{\alpha} e_N | \leq C_N h^{N+1}
 \\
& \ \ 
\times \sum_{ \alpha_1 + \alpha_2 = \alpha} 
 \sup_{ 
{{( x, \xi) \in T^* \RR^n }
\atop{ ( y , \eta) \in T^* \RR^n }}} \sup_{
|\beta | \leq M  \,, \beta\in \NN^{2d} }
\left|
( h^{\frac12} 
\partial_{( x, \xi; y , \eta) } )^{\beta } 
(i  \sigma ( D) / 2) ^{N+1} \partial^{\alpha_1} a ( x , \xi)  
\partial^{\alpha_2} b ( y, \eta ) 
\right| \,,
\end{split} 
\end{equation}
where $ \sigma ( D) = 
 \sigma ( D_x , D_\xi; D_y, D_\eta ) \,. $ 
\end{lem}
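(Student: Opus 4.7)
The starting point is the standard Weyl composition identity
\[
c(x,\xi) = \exp\!\left(\tfrac{ih}{2}\sigma(D_x,D_\xi;D_y,D_\eta)\right)\bigl(a(x,\xi)b(y,\eta)\bigr)\Big|_{x=y,\,\xi=\eta},
\]
which follows from \eqref{eq:weyl} after Fourier inversion; this is valid for any $a,b$ in our symbol class (in fact for any Schwartz symbols, and then by approximation for the class at hand, since $\widetilde S_{1/2}$ symbols can be cut off to Schwartz symbols with uniform estimates). My plan is to Taylor expand the exponential and control the integral remainder after a rescaling to standard semiclassical calculus.

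First, writing $L = (i/2)\sigma(D_x,D_\xi;D_y,D_\eta)$ and applying the one-variable identity
\[
e^{hL} = \sum_{k=0}^N \frac{(hL)^k}{k!} + \frac{h^{N+1}}{N!}\int_0^1 (1-t)^N L^{N+1}e^{thL}\,dt,
\]
to $a(x,\xi)b(y,\eta)$ and restricting to $\{x=y,\ \xi=\eta\}$ immediately produces the expansion \eqref{eq:weylc} with
\[
e_N(x,\xi) = \frac{h^{N+1}}{N!}\int_0^1(1-t)^N \bigl[L^{N+1}e^{thL}(a(x,\xi)b(y,\eta))\bigr]_{x=y,\,\xi=\eta}\,dt.
\]
The step $L^{N+1}$ acting on $a(x,\xi)b(y,\eta)$ is purely algebraic (differentiation of a product) and, via Leibniz, distributes onto $a$ and $b$ in a way that accounts for the sum $\alpha_1+\alpha_2=\alpha$ in \eqref{eq:new1} once we differentiate also in $\partial^\alpha$.

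Next, I need uniform bounds on $e^{thL}$ applied to a function in the rescaled class. I would use the rescaling \eqref{eq:resc}: setting $\tilde a = a\circ (h/\tilde h)^{1/2}$, $\tilde b = b\circ (h/\tilde h)^{1/2}$, the hypothesis $a,b\in \ST$ becomes the statement that $\tilde a,\tilde b\in S(T^*\RR^n)$ with seminorms uniform in $h,\tilde h$. Under this rescaling, the operator $e^{thL}=e^{(th/2)\sigma(D)/(i)}$ conjugates to $e^{(t\tilde h /2)\sigma(D)/(i)}$ acting on $\tilde a(x,\xi)\tilde b(y,\eta)$. Now $e^{i\tau \sigma(D)}$ is a unitary Fourier multiplier with smooth phase-space kernel, and for $\tau = t\tilde h/2$ with $t\in[0,1]$ the standard non-stationary / stationary phase argument gives, for some finite $M$ (depending only on dimension),
\[
\bigl\|e^{it\tilde h\sigma(D)/2}F\bigr\|_{L^\infty} \lesssim \sum_{|\beta|\leq M}\bigl\|(\tilde h^{1/2}\partial)^\beta F\bigr\|_{L^\infty},
\]
uniformly in $t\in[0,1]$ and in $\tilde h\in(0,1]$. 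Unwinding the rescaling, $\tilde h^{1/2}\partial$ becomes $h^{1/2}\partial_{(x,\xi;y,\eta)}$ in the original variables, which is precisely the weight appearing in \eqref{eq:new1}.

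Combining these two ingredients with $\partial^\alpha_{(x,\xi)}$ and splitting that derivative via Leibniz across $a$ and $b$ produces exactly the bound \eqref{eq:new1}. The main obstacle is the uniform $L^\infty$-boundedness bound on $e^{it\tilde h\sigma(D)/2}$ acting on controlled symbols; this reduces to a classical oscillatory integral estimate with quadratic phase, but keeping the dependence on $\tilde h$ explicit (so that it appears as $\tilde h^{1/2}\partial$, equivalently $h^{1/2}\partial$ in original coordinates) is the only nontrivial point. The rest is a bookkeeping exercise in Taylor expansion, Leibniz, and rescaling.
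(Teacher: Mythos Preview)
The paper does not actually prove this lemma; it is stated as a recollection of \cite[Lemma~3.6]{SZ10}. Your argument is the standard one and is essentially what one finds in that reference (or in \cite[\S4]{EZB}): Taylor expand $e^{hL}$ with integral remainder, commute $\partial^\alpha$ through the constant-coefficient multipliers $L^{N+1}$ and $e^{thL}$, and bound the action of $e^{thL}$ in $L^\infty$ uniformly for $t\in[0,1]$ by finitely many seminorms. The rescaling \eqref{eq:resc} that turns the $\ST$ calculus into the ordinary $\tilde h$-calculus is exactly the device the paper itself uses throughout \S\ref{s2p}, and it correctly converts the seminorm weights $\tilde h^{1/2}\partial$ back to $h^{1/2}\partial$ as you note.

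Two small comments. First, there is a harmless sign slip: $hL=(ih/2)\sigma(D)$, so $e^{thL}=e^{ith\sigma(D)/2}$, not $e^{(th/2)\sigma(D)/i}$; this does not affect any estimate. Second, the only substantive input you invoke without proof is the uniform bound
\[
\|e^{i\tau Q(D)}F\|_{L^\infty}\ \lesssim\ \sum_{|\beta|\le M}\|\partial^\beta F\|_{L^\infty},\qquad |\tau|\le 1,
\]
for a real nondegenerate quadratic form $Q$; this is classical (stationary phase with quadratic phase, as in \cite[Theorem~7.6.1]{Hor} or \cite[\S4]{EZB}), and the uniformity in $\tau$ down to $\tau=0$ follows by the further rescaling $z\mapsto \tau^{1/2}z$ you implicitly use. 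With that reference supplied, your proof is complete and matches the intended argument.
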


As a particular consequence we notice that if $ a \in 
\ST ( T^* \RR^n)  $ and $ b \in  S ( T^* \RR^n ) $ then 
\begin{gather}
\label{eq:abc}
  c ( x, \xi) =
 \sum_{k=0}^N \frac{1}{k!} \left( 
i h \sigma ( D_x , D_\xi; D_y , D_\eta) \right)^k a ( x , \xi) 
b( y , \eta) \rest_{ x = y , \xi = \eta} + {\mathcal O}_{\ST}
( h^{\frac{N+1}2 } \tilde h^{\frac{N+1}2} ) 
\,. 
\end{gather}

\subsection{The $ \PT $ calculus on a manifold}
\label{ptm}

On a manifold of the type defined in the beginning of
\S \ref{sc} we consider the following class $ \ST  $:
\[ \ST = \ST ( T^*X ) \defeq
\{ a \in \CI ( T^*X ) \; : \;
\partial_{(x,\xi)}^\alpha a =  (h/\tih)^{ - |\alpha |/2 }
{\mathcal  O} ( \langle \xi \rangle^{-\infty} ) \} \,, \]
where outside of a compact set we use Euclidean coordinates,
determined by the infinite ends of $ X $. 

We first observe that this class is invariant under 
symplectic lifting of 
diffeomorphisms of $ X $, constant outside 
of a compact set. To define $ \PT ( X ) $ we 
need to check invariance of $ \ST ( T^* \RR^n ) $ under 
local changes of coordinates. Towards that we have
the following lemma:
\begin{lem}
\label{l:inv}
Suppose that $ a \in \ST ( T^*\RR^n ) $, $ U_j \subset 
\RR^n $, $ j = 1, 2 $ are open, and $ f : U_1 \rightarrow U_2 $ is a 
diffeomorphism. Let $ \chi \in \CIc ( U_1 ) $. 
Then $ A_2 \defeq \chi a^w ( x, h D ) \chi = a_\chi ( x, h D ) $, 
where $ a_\chi \in \ST $, $ a_\chi = \chi a \chi + {\mathcal O}_\ST 
( h^{\frac 12 } \tih^{\frac 12} )  $. For 
$   A_1 \defeq (f^{-1})^* A f^* $, 
we have 
\[  A_1 = a_f^w ( x, h D ) \,,  \ \ a_f \in \ST ( T^* \RR^n ) \,, \]
and 
\begin{equation}
\label{eq:change}
  a_f ( x , \xi  ) = \chi ( f^{-1} ( x ) )  a ( f^{-1} ( x ) ,
{}^tf'(x) \xi ) \chi ( f^{-1} ( x ) ) + {\mathcal O}_{ \ST } 
( h^{\frac 12 } \tih^{\frac 12} ) \,. 
\end{equation}
\end{lem}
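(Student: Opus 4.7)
My plan addresses the two assertions in turn. \emph{First assertion.} Since $\chi\in\CIc(U_1)$ is independent of $\xi$, it lies in the standard class $S(T^*\RR^n)\subset\ST$ uniformly in $h,\tih$. Applying the composition formula \eqref{eq:abc} from Lemma~\ref{l:err} successively (once to $\chi\cdot a^w$, once to the result composed with $\chi$), the operator $\chi\, a^w(x,hD)\,\chi$ becomes the Weyl quantization of a symbol $a_\chi\in\ST$ with leading term $\chi a\chi$. Each order-$k$ correction contains a factor $h^k$ multiplied by at most $k$ derivatives of $a$; since derivatives of $a$ cost $(h/\tih)^{-1/2}$ while those of $\chi$ are $\Oo(1)$, the $k$-th term is $\Oo_\ST((h\tih)^{k/2})$, and the first claim follows.

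\emph{Second assertion.} I would proceed by the standard Kuranishi device. The Schwartz kernel of $A_1=(f^{-1})^*Af^*$ is $K_{A_1}(x,y)=|\det(f^{-1})'(y)|\,K_A(f^{-1}(x),f^{-1}(y))$. Substituting the Weyl kernel of $A=a_\chi^w$, applying the Kuranishi identity $f^{-1}(x)-f^{-1}(y)=G(x,y)(x-y)$ with $G(x,x)=(f^{-1})'(x)$, and making the change of variable $\eta={}^tG(x,y)^{-1}\xi$ in the oscillatory integral realizes $A_1$ as an amplitude operator with amplitude
\begin{equation*}
\tilde a(x,y,\eta)=\chi(f^{-1}(x))\chi(f^{-1}(y))\frac{|\det(f^{-1})'(y)|}{|\det G(x,y)|}\,a\!\left(\tfrac{f^{-1}(x)+f^{-1}(y)}{2},\,{}^tG(x,y)^{-1}\eta\right).
\end{equation*}
The smooth factors $G,\,|\det G|^{-1},\,|\det(f^{-1})'|,\,\chi$ have derivatives bounded uniformly in $h,\tih$; combined with $a\in\ST$, this shows $\tilde a$ is of $\ST$-type jointly in $(x,y,\eta)$. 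Converting $\tilde a$ to a Weyl symbol via the standard formula $a_f(z,\eta)=\exp(\tfrac{ih}{2}\partial_\eta\cdot(\partial_x-\partial_y))\tilde a\rest_{x=y=z}$, the $k=0$ term yields precisely the claimed principal symbol $\chi(f^{-1}(z))^2\, a(f^{-1}(z),{}^tf'(z)\eta)$, using $G(z,z)=(f^{-1})'(z)$.

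\emph{Main obstacle.} The delicate step is bounding the corrections of order $k\ge 1$ in this expansion. The key ingredient is the symmetry
\begin{equation*}
\partial_xG(x,y)\rest_{x=y}=\partial_yG(x,y)\rest_{x=y}=\tfrac12(f^{-1})''(x),
\end{equation*}
verified directly from $G(x,y)=\int_0^1(f^{-1})'(y+t(x-y))\,dt$, and analogously for $|\det G|$: this makes $(\partial_x-\partial_y)\tilde a\rest_{x=y}$ vanish except for contributions in which a single derivative falls on $a$ and a single one on a smooth factor. Such a mixed term has cost $(h/\tih)^{-1/2}$ in the symbol estimate, which after multiplication by $h$ yields $\Oo_\ST((h\tih)^{1/2})$ per order, as required. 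Without this cancellation one obtains only $\Oo_\ST(\tih)$, insufficient in the regime $\tih\gg h$. A more conceptual alternative would be to rescale via \eqref{eq:resc}: the diffeomorphism $f$ transforms into $\tilde f(\tilde x)=(\tih/h)^{1/2}f((h/\tih)^{1/2}\tilde x)$, whose $k$-th derivative $(h/\tih)^{(k-1)/2}f^{(k)}$ is uniformly bounded for $k\ge 1$; the classical $\CIb$-diffeomorphism invariance of Weyl quantization with Planck constant $\tih$ then applies to the rescaled standard symbol $b(\tilde x,\tilde\xi)=a((h/\tih)^{1/2}(\tilde x,\tilde\xi))$ and transfers back to give the $\ST$-statement.
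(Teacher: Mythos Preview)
Your argument is correct and follows the same Kuranishi-trick strategy as the paper; the first assertion is handled identically. For the second, the paper organizes the computation a little differently: it writes the relation $K_{a_\chi}(x,y)\,|dy|=K_{a_f}(\tilde x,\tilde y)\,|d\tilde y|$ and expands the \emph{unknown} side, centering the Kuranishi matrix at the midpoint via $f(x)-f(y)=F(x,y)(x-y)$ with $F(x,y)=f'((x+y)/2)+O((x-y)^2)$ and $(f(x)+f(y))/2=f((x+y)/2)+O((x-y)^2)$. After the substitution $\xi={}^tF\,\tilde\xi$ the leading amplitude is then already a function of $(x+y)/2$, hence in Weyl form with no conversion needed; the $O((x-y)^2)$ corrections and the Jacobian mismatch $|f'((x+y)/2)|^{-1}|f'(y)|=1+O(|x-y|)$ are turned into $(h\tih)^{1/2}$ errors by the integration by parts $(x-y)\mapsto h\partial_\xi$. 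Your symmetry $G(x,y)=G(y,x)$ is the same fact in disguise (indeed $G(x,y)=(f^{-1})'((x+y)/2)+O((x-y)^2)$), so that the rough factor $a(M,N\eta)$ of your amplitude is even in $x-y$; the amplitude-to-Weyl expansion then does the work of those integrations by parts, with the asymmetric Jacobian factor $|\det(f^{-1})'(y)|$ producing the leading $(h\tih)^{1/2}$ term exactly as in the paper. The paper's route has the advantage that one never needs to analyze the higher-order conversion terms, which you dispatch somewhat tersely with ``per order''.

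One caution about your alternative: invoking ``classical $\CIb$-invariance of Weyl quantization with Planck constant $\tih$'' at face value yields only $O_{\ST}(\tih)$, which is weaker than $(h\tih)^{1/2}$ in the regime $h\ll\tih$. To recover the sharp bound along those lines you must also use that the rescaled map satisfies $\tilde f^{(k)}=O((h/\tih)^{(k-1)/2})$ for $k\ge 2$, so that the standard $O(\tih)$ correction---which is linear in $\tilde f''$---is in fact $O(\tih\cdot(h/\tih)^{1/2})=(h\tih)^{1/2}$. The remark following the lemma in the paper makes essentially the same point for the right quantization.
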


\medskip
\noindent
{\bf Remark.} It seems important that 
we use the Weyl quantization. 
In the case of the right quantization 
\[ a^1 ( x, h D ) u = \frac 1 { ( 2 \pi h)^n }
\int \int a^1 ( x , \xi ) e^{ i \langle x - y , \xi \rangle/h } 
u ( y ) dy d \xi \,, \]
we have the exact formula
\[ a_f^1 ( f (  x)  , \eta ) = e^{-i \langle f ( x) , \eta 
\rangle/h  } a^1_\chi ( x , h D )  e^{ i \langle
 f ( x ) , \eta \rangle / h } \,, \]
see \cite[(18.1.28)]{Hor}. The asymptotic expansion
\cite[(18.1.30)]{Hor}, 
\begin{gather*}
 a_f^1 ( f ( x ) , \eta ) \sim 
\sum_{ \alpha \in N^n } \frac 1 {\alpha !} 
( \partial_\xi^{\alpha} a^1_\chi )  ( x , {}^t f'(x) \eta ) ( h D_y)^{\alpha} e^{ 
i\langle \rho_x ( y ) , \eta \rangle /h } \rest_{x=y} \,, \\ 
\rho_x ( y ) \defeq f ( y ) - f ( x ) - f'(x) ( y - x ) \,, 
\end{gather*}
is valid in our case {\em as an expansion in $ \tih $ only}.
In fact, due to the second order of vanishing of
$ \rho_x $ at $ x $, 
\[ ( h D_y)^{\alpha} e^{ 
i\langle \rho_x ( y ) , \eta \rangle /h } \rest_{x=y} = 
{\mathcal O} ( h^{|\alpha|/2} \langle \eta \rangle^{|\alpha|/2} ) \,, \]
and 
\[ ( \partial_\xi^{\alpha} a^1)  ( x , {}^t f'(x) \eta ) 
= {\mathcal O} ( ( h/\tilde h )^{-|\alpha|/2} \langle \eta \rangle^{-\infty}
 \,. \]
Hence the terms in the expansion are in 
$$  \tih^{|\alpha|/2 }\ST $$
(the term with $ |\alpha | = 1 $ vanishes).

The Weyl quantization will also be important
in local arguments in \S \ref{ere}. 
Finally we remark that for this class of symbols the improvement 
in the error occurs only in $ \tih $
when the action of half-densities is considered -- see
\cite[Appendix]{SZ8} or \cite[Theorem 9.12]{EZB}.

\medskip

\begin{proof} The statement about $ a_\chi $ follows from 
Lemma \ref{l:err}. For the change 
of variables we consider the Schwartz kernels of $ A_2 = a^2_\chi ( x, h D ) $
and $ A_1 =  a_f^w ( x, h D ) $ as densities:
\begin{equation}
\label{eq:weyla}
  K_b ( x , y ) | d y | \defeq 
 \frac{1}{ ( 2 \pi h )^n } 
\int   b \left( \frac{x + y }{2}  , \xi \right
) e^{ i \langle x -  y, \xi \rangle / h }  d \xi
|dy |   \,, \end{equation}
which means we seek $a_f$ such that
\begin{equation}
\label{eq:inva}
  K_{a_\chi}  ( x , y ) | d y | =
  K_{ a_f } ( \tilde x , \tilde y ) | d \tilde y| \,, \ \ \ 
\tilde x = f ( x ) \,,  \ \ \tilde y = f ( y ) \,. 
\end{equation}
We 
rewrite the right-hand side as 
by changing variables
\[ \frac{1}{ ( 2 \pi h )^n } 
\int   a_f \left( \frac{ f (  x ) + f(  y)  }{2}  , \tilde 
\xi \right
) e^{ i \langle f(  x)  -  f(  y) , \tilde 
\xi \rangle / h }  d \tilde \xi  | f ' ( y ) | |d  y |    \,. \]
Writing,
\begin{gather}
\label{eq:kur}
\begin{gathered}
 f ( x) - f ( y ) =  F ( x , y ) ( x - y ) \,, \ \ 
F ( x, y ) = f' \left( \frac{x+y}{2} \right) + 
{\mathcal O} ( ( x-y)^2 ) \,, \\ 
f( x  ) + f ( y ) = f \left( \frac{x+y}{2} \right) + 
{\mathcal O} ( ( x - y ) ^2 ) \,.
\end{gathered}
\end{gather}
we apply the ``Kuranishi trick'' by changing 
variables in the integral, $ \xi = F ( x , y )^{t} \tilde \xi $:
\[
\begin{split}
&  \frac{1}{ ( 2 \pi h )^n } 
\int  \left(  a_f \left( f \bigg(
\frac{   x +   y  }{2} \bigg)  ,  ( F( x, y )^t)^{-1} \xi \right) 
+ {\mathcal O}_{ \ST } \left(   { \tih}^{\frac 12}  h  
^{ -\frac 12 } ( x - y )^2 \right) \right) 
 \\
& \ \ \ \ \ \ \ \ \ \ \ \ \  \times e^{ i \langle   x  -    y ,  \xi 
\rangle / h }  d  \xi   | F( x , y )^t  |^{-1} 
| f ' ( y ) | |d  y |   \\
& = \frac{1}{ ( 2 \pi h )^n } 
\int  \left(  a_f \left( f 
\bigg(\frac{    x +   y  } {2} \bigg) ,   \left(  f' \bigg( 
\frac {x+ y } 2 \bigg)^t \right) ^{-1} \xi \right) 
+ {\mathcal O}_{ \ST }\left( { \tih}^{\frac 12}  h  
^{ -\frac 12 }  ( x - y )^2 \right) \right)
\\
& \ \ \ \ \ \ \ \ \ \ \ \ \  \times e^{ i \langle   x  -    y ,  \xi 
\rangle / h }  d  \xi  | f'(  ( x + y )/2 ) |^ {-1}  
| f ' ( y ) | |d  y |  \,.  \end{split} \]
We now observe that
\[  | f'(  ( x + y )/2 ) |  = | f ' ( y ) | 
+ {\mathcal O} (  |x - y |  ) \,, \]
and  consequently  $   K_{a_f } ( \tilde x , \tilde y ) | d \tilde y | = \ $
\[ \begin{split} 
& \frac{1}{ ( 2 \pi h )^n } 
\int  \left(  a_f \left( f \bigg(
\frac{   x +   y } {2} \bigg) ,   \left(  f' \bigg( 
\frac {x+ y } 2 \bigg)^t \right) ^{-1} \xi \right) 
 + 
{\mathcal O}_{ \ST }\left( { \tih}^{\frac 12}  h  
^{ -\frac 12 }  ( x - y )^2  +  | x - y |  \right)  
 \right) \\
& \ \ \ \ \ \ \ \ \ \ \ \ \ \ \ \ 
 e^{ i \langle   x  -    y ,  \xi 
   \rangle / h }  d  \xi  |d  y | \,. \end{split}
\]
The terms 
\[ {\mathcal O}_{ \ST }\left( { \tih}^{\frac 12}  h  
^{ -\frac 12 }  ( x - y )^2  \right) \]
contribute terms  $ {\mathcal O}_{\ST}  (\tih^{\frac 32} h^{\frac12} ) $
to the symbol: we use 
integration by parts based on 
$$ (x - y ) \exp ( \langle x - y , \xi 
\rangle / h ) = h D_\xi \exp ( \langle x - y , \xi 
\rangle / h ) \,. $$
 Similarly, smooth terms of the form 
$  {\mathcal O}_{ \ST }\left(| x - y | \right)  $ 
give contributions of the form 
 $ {\mathcal O}_{\ST}  (\tih^{\frac 12} h^{\frac12} ) $. 
Here in dealing with the ``big-Oh'' terms we use the fact that for $ b = b( x, y , \xi )  \in \ST $
(with the definition modified to include derivatives with respect
to $ y $),
\[  \frac{1}{ ( 2 \pi h )^n } 
\int b ( x, y , \xi )  e^{ i \langle   x  -    y ,  \xi 
   \rangle / h }  d  \xi   
=  \frac{1}{ ( 2 \pi h )^n } 
\int b_w \left( \frac{ x+ y } 2 , \xi \right)  
e^{ i \langle   x  -    y ,  \xi 
   \rangle / h }  d  \xi   \,, \]
where 
\[  b_w ( x, \xi ) = b ( x , x , \xi ) + {\mathcal O}_\ST 
( \tih ) \,,\]
which follows from the standard pseudodifferential  calculus 
and the rescaling \eqref{eq:resc}.

This shows that $   K_{a_f } ( \tilde x , \tilde y ) | d \tilde y | =  \ $
\[ 
 \frac{1}{ ( 2 \pi h )^n } 
\int  \left(  a_f \left( f \left(
\frac{   x +   y } {2} \right) ,   \left(  f' \left( 
\frac {x+ y } 2 \right)^t \right) ^{-1} \xi \right) 
 + 
{\mathcal O}_{ \ST }\left( { \tih}^{\frac 12}  h  
^{ \frac 12 } )  \right)  \right) e^{ i \langle   x  -    y ,  \xi 
   \rangle / h }  d  \xi  |d  y | \,,
\]
hence $a_f$ can be chosen in the form \eqref{eq:change} so that this matches
$ K_a ( x , y ) | d y |.$
\end{proof}

We need one more lemma which shows that away from the diagonal
the symbol contribution is negligible in $ h$ (rather
than merely in the $ \tih $ sense). This does not contradict the 
rescaling \eqref{eq:resc} which eliminates $ h $, as the
distance to the diagonal then grows proportionally to $ h^{-1/2} $
(see \cite[Theorem 4.18]{EZB}).

\begin{lem}
\label{l:loc} 
Suppose that $ \chi_j \in \CIc ( \RR^n ) $ are independent of 
$ h $, and $ \supp \chi_1 \cap \supp \chi_2 = \emptyset $. If
$ a \in \ST ( T^* \RR^n ) $ then 
\[  \chi_1 a^w ( x, h D ) \chi_2 = {\mathcal O}_{ {\mathcal S}' 
\rightarrow {\mathcal S} } ( h^\infty ) \,. \]
\end{lem}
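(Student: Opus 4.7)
The plan is to compute the Schwartz kernel of $\chi_1 a^w(x,hD)\chi_2$ explicitly and show that it is a smooth, compactly supported function whose every $\mathcal{C}^k$-norm is $\mathcal{O}(h^\infty)$. This immediately gives the desired mapping property, since for a kernel of this kind any pair of Schwartz seminorms on the output and input sides is controlled by a finite number of $\mathcal{C}^k$-norms of the kernel on $\supp\chi_1\times\supp\chi_2$.

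By \eqref{eq:weyl}, this kernel is
\begin{equation*}
K(x,y)=\frac{\chi_1(x)\chi_2(y)}{(2\pi h)^n}\int_{\RR^n} a\!\left(\tfrac{x+y}{2},\xi\right)e^{i\langle x-y,\xi\rangle/h}\,d\xi.
\end{equation*}
Since $\supp\chi_1\cap\supp\chi_2=\emptyset$, on the support of $\chi_1(x)\chi_2(y)$ we have $|x-y|\geq c_0>0$. Introducing the operator
\begin{equation*}
L^t=\frac{ih\langle x-y,D_\xi\rangle}{|x-y|^2},\qquad L^t\,e^{i\langle x-y,\xi\rangle/h}=e^{i\langle x-y,\xi\rangle/h},
\end{equation*}
and integrating by parts $N$ times in $\xi$, each step produces a factor $h/|x-y|^2\leq h/c_0^2$ together with one $\xi$-derivative falling on $a$. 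By \eqref{eq:sthfs}, each such $\xi$-derivative on $a$ costs a factor $(\tih/h)^{1/2}$, while the rapid $\xi$-decay $\langle\xi\rangle^{-M}$ is preserved with $M$ as large as we wish. After $N$ such integrations the integrand is bounded by $C_N(h\tih)^{N/2}\langle\xi\rangle^{-M}$; choosing $M>n$ makes the integral convergent and gives $|K(x,y)|\leq C_N h^{N/2-n}\tih^{N/2}$, which is $\mathcal{O}(h^\infty)$ because $\tih$ is bounded.

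For derivatives $\partial_x^\beta\partial_y^\gamma K$, differentiation under the integral either (i) lands on $\chi_j$, producing a bounded smooth compactly supported factor, (ii) lands on $a$, adding only a factor $(\tih/h)^{|\delta|/2}$ and no loss of $\xi$-decay, or (iii) lands on the oscillatory factor, producing a polynomial in $\xi/h$ of total degree $|\beta|+|\gamma|$. The factors of $\xi$ are absorbed by increasing $M$, and the negative powers of $h$ are compensated by performing sufficiently many additional integrations by parts. Iterating the argument above gives $|\partial_x^\beta\partial_y^\gamma K(x,y)|=\mathcal{O}(h^\infty)$ uniformly in $(x,y)$, which by compact support of $K$ yields $\chi_1 a^w(x,hD)\chi_2=\mathcal{O}_{\mathcal{S}'\to\mathcal{S}}(h^\infty)$.

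This is a standard non-stationary-phase estimate and no serious obstacle arises. The only point at which the $\ST$ class differs from the usual semiclassical calculus is that a single $\xi$-derivative on $a$ now costs $(\tih/h)^{1/2}$ rather than being harmless, so each integration by parts gains only $(h\tih)^{1/2}$ instead of the familiar $h$; this is the subtle point to watch, but since $\tih$ is bounded this quantity is still $\mathcal{O}(h^{1/2})$, and the argument goes through.
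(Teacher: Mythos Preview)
Your proof is correct. The paper takes a closely related but differently packaged route: rather than estimating the Schwartz kernel directly, it invokes the composition formula \eqref{eq:abc} (derived from Lemma~\ref{l:err}) for $a\in\ST$ against $b\in S$, applied with $b=\chi_1$ and $b=\chi_2$. Since $\supp\chi_1\cap\supp\chi_2=\emptyset$, every term in the asymptotic expansion of $\chi_1\#a\#\chi_2$ vanishes identically, and the remainder after $N$ terms lies in $h^{(N+1)/2}\tih^{(N+1)/2}\,\ST$ for every $N$, hence is $\mathcal{O}(h^\infty)$ and arbitrarily smoothing. Both arguments rest on exactly the observation you isolate at the end: each integration by parts (equivalently, each order in the expansion) gains only $(h\tih)^{1/2}$ rather than $h$, but since $\tih$ is bounded this still yields $\mathcal{O}(h^\infty)$. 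Your direct kernel computation is more self-contained and transparent; the paper's version is shorter only because the analytic work has already been absorbed into Lemma~\ref{l:err}.
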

\begin{proof}
We can apply Lemma \ref{l:err} as in the composition 
formula for $ a \in \ST $ and $ b \in S $ presented 
in \eqref{eq:abc}: in the composition $  \chi_1 a^w \chi_2 $
all terms in the expansion vanish and the error becomes
arbitrarily smoothing and bounded by $ h^{N } $, for any 
$ N $.
\end{proof}

Using Lemmas \ref{l:inv} and \ref{l:loc} 
we obtain an invariantly defined symbol
map for the class $ \PT( X ) $ defined using local coordinates,
as in \cite[\S 18.2]{Hor} (see \cite[\S E.2]{EZB} for
the semiclassical case). The symbol map occurs in the following
short exact sequence:
\[ 0 \longrightarrow h^{\frac 12} \tih^{\frac12 } \PT ( X ) 
\longrightarrow \PT ( X ) \stackrel{\tilde \sigma_{\frac12}}{
\longrightarrow } \ST ( T^* X ) / h^{\frac 12} \tih^{\frac12 } \ST ( T^*X ) 
\longrightarrow 0  
\,. \]

This means that if we start with $ a \in h^{-m} \ST ( T^* X ) $ then 
the operator $ a^w ( x, h D) \in h^{-m} \PT ( X ) $ is well defined
and its symbol is determined in any local coordinates up to terms
in $ h^{-m+\frac12} \tih^{\frac12} \ST $. We will be particularly 
interested in the case 
\begin{equation}
\label{eq:STd} 
   a \in \ST^{-} ( T^* X ) \defeq \bigcap_{m>0}  h^{-m} \ST ( T^* X ) \,, 
\end{equation}
in which case the local symbols will be determined up to terms
of size $ h^{\frac12 } \tih^{\frac 12} \ST^- $.

\subsection{Exponentiation and quantization}
\label{eaq}

As in \cite{SZ10} and \cite{Chr} it will be important to consider 
operators $ \exp G^w ( x, h D ) $, where $ G \in \ST^- $. 
To understand conjugated operators, 
\[ \exp( {-G^w ( x ,h D) } ) P \exp( { G^w ( x, h D) } ) \,, \]
we will use 
a special case of a result of
Bony and Chemin \cite[Th\'eoreme
6.4]{BoCh} -- see \cite[Appendix]{SZ10} or \cite[\S 9.6]{EZB}.
Because of the invariance properties established in \S \ref{ptm}
we discuss only the case of $ \RR^n $ in the next two subsections.

Let $ m ( x , \xi ) $ be an order function in 
the sense of \cite{DiSj}:
\begin{equation}
\label{eq:orderm}
  m ( x , \xi ) \leq C m ( y , \eta ) \langle  ( x-y , \xi - \eta ) 
\rangle ^N \,. 
\end{equation}
The class of symbols, $ S ( m ) $,  corresponding to $ m $ is defined as
\[  a \in S ( m ) \ \Longleftrightarrow \ |\partial^\alpha_x \partial_\xi^\beta
a ( x , \xi ) | \leq C_{\alpha \beta} m ( x , \xi ) \,. \]
If $ m_1$ and $ m_2 $ are order functions in the sense of \eqref{eq:orderm},
and $ a_j \in S (m_j ) $ then (we put $ h=1 $ here),
\[ a_1^w ( x, D ) a_2^w ( x , D ) = b^w ( x , D) \,, \ \ b \in S ( m_1 m_2 ) \,,\]
with $ b $ given by the usual formula,
\begin{gather}
\label{eq:usual}
\begin{gathered} \begin{split} b ( x , \xi ) & = a_1 \; \# \; a_2 ( x, \xi ) \\
& \stackrel{\rm{def}}{=}
\exp ( i \sigma ( D_{x^1}, D_{\xi^1} ; D_{x^2 } , D_{\xi^2} )/2  ) 
a_1 ( x^1 , \xi^1 ) a_2 ( x^2 , \xi^2 ) \rest_{ x^1 = x^2 = x , 
\xi^1 = \xi^2 = \xi } \,. \end{split} \end{gathered}
\end{gather}
A special case of \cite[Th\'eoreme 6.4]{BoCh} (see \cite[Appendix]{SZ10}) gives 
\begin{prop}
\label{p:bbc}
Let $m $ be an order function in the sense of \eqref{eq:orderm} and
suppose that $ G \in \CI ( T^* \RR^n; \RR  )  $ satisfies 
\begin{equation}
\label{eq:bc1}
 G ( x , \xi ) - \log m ( x , \xi )  = {\mathcal O} ( 1 ) \,, \ \
\partial_x^\alpha \partial_\xi^\beta G ( x, \xi ) = {\mathcal O} ( 1 ) \,,
 \ \ |\alpha | + |\beta| \geq 1 \,.
\end{equation}
Then 
\begin{equation}
\label{eq:bc2} 
\exp ( t G^w ( x , D ) ) = B_t^w ( x , D ) \,, \ \ B_t \in S ( m^t ) \,.
\end{equation}
Here $ \exp ( t G^w ( x , D) ) $ is constructed 
by solving $ \partial_t u = G^w ( x , D ) u $, $ u \in {\mathcal S} $. 
The estimates on $ B_t \in S( m^t ) $ depend 
{\em only} on the constants in \eqref{eq:bc1} and in \eqref{eq:orderm}.
In particular they are independent of the support of $ G $.
\end{prop}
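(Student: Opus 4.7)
The plan is to reduce to the case $m = e^G$ and then solve the Heisenberg-type symbol equation $\partial_t b_t = G \# b_t$, $b_0 = 1$, directly in the Weyl calculus, obtaining $b_t \in S(m^t)$ by stripping out the explicit exponential factor. The reduction is justified by the hypothesis $G - \log m = \mathcal{O}(1)$, which makes $e^G$ an order function equivalent to $m$ (with the same constant $N$ and a new $C$). So assume $m = e^G$ with all derivatives of $\log m$ bounded.

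The first crucial observation is that $e^{tG} \in S(m^t)$ with seminorms bounded polynomially in $t$. Indeed, by Fa\`a di Bruno,
\[
\partial^\alpha e^{tG} = e^{tG}\, P_\alpha(t; \partial G, \partial^2 G, \dots, \partial^{|\alpha|} G),
\]
where $P_\alpha$ is a polynomial of degree $|\alpha|$ in $t$ with coefficients that are universal polynomials in the derivatives of $G$; by hypothesis these coefficients are bounded, so $|\partial^\alpha e^{tG}| \leq C_\alpha \langle t\rangle^{|\alpha|} e^{tG}$. Now make the ansatz $b_t = e^{tG} c_t$ and expand $G \# (e^{tG} c_t)$ using \eqref{eq:usual}: the leading term is $G \cdot e^{tG} c_t$, and each subsequent term involves a finite number of derivatives of $e^{tG}$ paired with derivatives of $c_t$. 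Pulling out one factor of $e^{tG}$ from every such contribution (using the observation above for its derivatives) yields an equation
\[
\partial_t c_t = L_t c_t,
\]
where $L_t$ is a pseudodifferential operator with symbol in $S(1)$, whose seminorms depend polynomially on $t$ and only on finitely many derivative bounds of $G$ (and the $g$-temperate constants for $m$).

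Next I would produce $c_t \in S(1)$ by a Picard iteration in the Fr\'echet topology of $S(1)$. The key a priori estimate is a Gronwall-type bound on each seminorm $\|c_t\|_{\alpha,\beta}$ in terms of finitely many seminorms of $c_t$ with coefficients $\langle t \rangle^{N_\alpha}$; this gives a unique solution for all $t$ with polynomially controlled seminorms, and hence $b_t = e^{tG} c_t \in S(m^t)$ with the required uniformity. Finally, to identify $b_t^w$ with $\exp(tG^w)$, I would show that both operators solve the same Cauchy problem $\partial_t U = G^w U$, $U_0 = I$ on $\mathcal{S}(\mathbb{R}^n)$: for $b_t^w$ this follows from the construction and the composition rule, and for $\exp(tG^w)$ from (essentially) self-adjointness of $G^w$ on $\mathcal{S}$, which makes the semigroup well-defined between appropriate weighted Sobolev spaces.

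The main obstacle is the rigorous bookkeeping of the Moyal remainders: one must verify that the errors in $G \# (e^{tG} c_t) - G \cdot e^{tG} c_t$ genuinely factor as $e^{tG}$ times a symbol in $S(1)$, uniformly in $t$ and with constants depending only on $\sup |\partial^\alpha G|$ for $1 \leq |\alpha| \leq $ some finite order. This relies decisively on the fact that only first and higher derivatives of $G$ appear in every term of the Moyal expansion (the value of $G$ itself having been absorbed into the exponential prefactor), so the unboundedness of $G$ does not propagate into the symbol seminorms of $c_t$. Once this accounting is done, the independence of all estimates from the support of $G$ in the statement is automatic.
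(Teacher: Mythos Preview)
The paper does not prove this proposition at all: it is quoted as a special case of Bony--Chemin \cite[Th\'eor\`eme 6.4]{BoCh}, with pointers to \cite[Appendix]{SZ10} and \cite[\S 9.6]{EZB} for more elementary treatments. So there is nothing in the paper itself to compare your argument against; I will instead comment on how your sketch relates to those cited proofs.

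Your reduction to $m=e^G$ and the ansatz $b_t=e^{tG}c_t$ are standard and correct, and your observation that only \emph{derivatives} of $G$ (all bounded) enter the Moyal remainder $G\#a-Ga$ is exactly the key structural point. Where the sketch has a genuine gap is the step ``produce $c_t\in S(1)$ by a Picard iteration\ldots a Gronwall-type bound on each seminorm.'' The operator $L_t$ you construct, being built from the Moyal product, loses a \emph{fixed} finite number of derivatives: the $N$-th seminorm of $L_tc$ is controlled by seminorms of $c$ of order $N+M$ for some $M>0$ coming from the oscillatory-integral remainder. A Gronwall inequality for the $N$-th seminorm then feeds on the $(N+M)$-th, which feeds on the $(N+2M)$-th, and the chain never closes. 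Linear ODEs in Fr\'echet spaces with such loss need not have global solutions without further input.

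The proofs in the cited references sidestep this in one of two ways. Either one first establishes the result for compactly supported $G$ (so $G^w$ is bounded on $L^2$ and $e^{tG^w}$ exists trivially), obtains seminorm bounds on $B_t$ that depend only on the constants in \eqref{eq:bc1} and \emph{not} on $\operatorname{supp}G$ --- this is precisely why the last sentence of the proposition is emphasized --- and then passes to the limit. Or one constructs $e^{tG^w}$ as an operator first and uses Beals's commutator characterization of $\Psi(m^t)$: iterated $\operatorname{ad}_{\ell^w}$ with $\ell$ linear bring in $[\ell^w,G^w]=(H_\ell G)^w\in\Psi(1)$ via Duhamel, reducing everything to $L^2$ bounds on $e^{sG^w}$ between weighted spaces.

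A smaller point: essential self-adjointness of $G^w$ yields the unitary group $e^{isG^w}$, not the real-parameter family $e^{tG^w}$; the latter needs one of the arguments above.
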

Since $ m^t $ is the order function $ \exp ( t \log m ( x , \xi ) ) $, 
we can say that on the level of order functions ``quantization 
commutes with exponentiation''.

\subsection{Conjugation by exponential weights} 
\label{cew}

Let $ m $ be an order function for the $ \ST $ class:
\[   m ( \rho ) \leq C m ( \rho' ) \left\langle \frac{ \rho - \rho' }
{ ( h/\tilde h)^{\frac12} } \right\rangle^N \,, \]
for some $ N $.  We will consider order functions satisfying
\begin{equation}
\label{eq:ord}  m \in \ST ( m ) \,, \ \ \frac{1} m \in 
\ST \left ( \frac 1 m \right) \,.
\end{equation}
This is equivalent to $ m ( \rho ) = \exp G ( \rho ) $ with 
\begin{gather}
\label{eq:ord1}
\begin{gathered}
   \frac{ \exp G ( \rho ) } { \exp G ( \rho' ) } 
 \leq C \left \langle \frac{ \rho - \rho' }
{ ( h/\tilde h)^{\frac12} } \right\rangle^N \,, \ \ \ 
\ \ \partial^\alpha G = {\mathcal O} ( (h/\tilde h )^{-|\alpha|/2} \,, 
\ | \alpha| \geq 1 \,. \end{gathered}
\end{gather}
Using the rescaling \eqref{eq:resc}
we see that Proposition \ref{p:bbc} implies that 
\begin{gather}
\label{eq:imG}
\begin{gathered}
   \exp ( s G^w ( x, h D) ) = F_s^w ( x, h D) \,, \ \
F_s \in \ST ( m^s ) \,,  \ \ s \in \RR \,, \\
A = \Op ( a ) \,, \ \ a \in \ST ( m^s) \ \Longleftrightarrow 
\ A = \exp ( s G^w ( x, h D ) ) \Op ( a_0 ) \,, \ \ a_0 \in \ST \,. 
\end{gathered}
\end{gather}

For $ P \in \Psi^{0,0}_h ( X ) $ we consider
\begin{equation}
\label{eq:exGe}
\begin{split} 
P_{sG } \defeq e^{-s G^w ( x, h  D) / h } P e^{ G^w ( x, h D ) } & = e^{-\ad_{s G^w ( x, h D) } } P  
\in \PT \,, 
\end{split}
\end{equation}
where used Proposition \ref{p:bbc} as described above.
In particular we have an expansion
\begin{equation}
\label{eq:exGex} 
e^{-sG^w ( x, h  D) / h } P e^{ s G^w ( x, h D ) /h } \sim 
 \sum_{\ell=0}^\infty  \frac{(-1)^\ell} { \ell!} \left( 
s  \ad_{G^w ( x, h D ) }\right) ^\ell P \,, 
\end{equation}
where 
\begin{equation}
\label{eq:ads}
  \ad_{G^w ( x, h D ) } ^\ell P  \in h \tilde h^{\ell -1 } \PT \,.
\end{equation}

\subsection{Escape function away from the trapped set}
\label{eft}

Here we recall the escape function from 
\cite[Appendix]{GeSj}. Suppose that 
$ U, V $ are open neighbourhoods of 
of $ K \cap p^{-1} ( [ - \delta, \delta] )  $, 
\[   \overline U \Subset V  \Subset T^*X \,. \]
There exists $ G_1 \in \CI ( T^* X ) $, such that
\begin{equation}
\label{eq:gsa0}
G_1\rest_U\equiv 0\,, \  \ H_p G_1 \geq 0 \,, \ \
H_p G_1 \rest_{ p^{-1}( [-2\delta, 2 \delta ])   } \leq C \,,\ \ 
H_p G_1 \rest_{  p^{-1} ([- \delta, \delta ]  )\setminus V } \geq 1 \,.
\end{equation}
Since  $H_p G_1 \geq 0 $, $ G_1 $ is an escape function in the 
sense of \cite{HeSj}. It is 
{\em strictly increasing} along the flow of $ H_p $
on $ p^{-1}( [-\delta, \delta ] ) $, {\em away from} the trapped set $ K $.
Moreover $ H_p G $ is bounded in 
a neighbourhood of $ p^{-1} ( [-2 \delta , 2 \delta ] ) $. 
Such an escape function $G_1$ is necessarily of unbounded support.

\section{Proof of Theorem \ref{th1}}
\label{prth}

In \S \ref{ep}-\ref{en} we identify $ P ( z) $ with 
$ P_0 ( z ) $ and \emph{assume that $ u $ is supported in $ X_0 $.}
In \S \ref{glue} we will show how the assumptions on $ P(z) $
in \S \ref{ina} give a global estimate on the inverse. 
Since we have not assumed that $ \bbbone_0 ( P ( z) - i)^{-1} $ is
a compact operator we do {\em not} prove that $ ( P ( z ) - i W )^{-1} $
is a meromorphic family of operators. We prove that the inverse
exists for $ \Im z > - \nu_0 h $ by direct estimates.

\subsection{Estimates for $ \Im z > 0 $.}
\label{ep}

To obtain the first estimate in \eqref{eq:th1} we 
adapt the proof of \cite[Lemma 6.1]{SZ8} to our setting. 

For that let $ \psi^w  = \psi^w ( x , h D ) $, $ \psi 
\in \CIc ( T^*X , [0,1]) $,
be a microlocal cut-off to a
a small neighbourhood of $ p^{-1} ( 0 ) \cap T^*_{U_2 } X $, and 
suppose that 
$$ v = (  P ( z) - i W ) u \,. $$
Semi-classical elliptic regularity gives 
\begin{equation}
\label{eq:5.1.1}
\| ( 1 - \psi^w  ) u \| \leq C \| v \| + {\mathcal O} ( h^\infty ) 
\| u \|
\end{equation}
(see part {\em (i)} of Proposition \ref{p:work}).
The assumption that $ \partial_z P ( 0 ) $  has a negative
symbol on the characteristic set of $ p $, in the region where
$ 0 < W < 1 $ implies that
\[ P ( z ) = P ( \Re z ) - i \Im z \;  Q ( z ) \,, \]
where $ P ( \Re z ) $ is self-adjoint and $ \sigma (Q ( z ))  > 1/C > 0 $
near $ p^{-1} ( 0 ) \cap T^*_{U_2  } X $. This shows that 
\begin{equation}
\label{eq:5.1.2} 
\begin{split}
- \Im \langle ( P( z ) - i W ) \psi^w u , \psi^w u \rangle & =  \Im z \; \Re \langle 
Q ( z ) \psi^w u , \psi^w u \rangle + \langle W \psi^w u , \psi^w u \rangle \\
& \geq 
 \Im z \; \left( \| \psi^w u \|^2 / C - {\mathcal O}(h^\infty ) \| u \|^2  
\right) 
+  \langle W \psi^w u , \psi^w u \rangle 
\,, \end{split} \end{equation}
where we used the semi-classical G{\aa}rding inequality (see
\cite[Theorem 7.12]{DiSj} or \cite[Theorem 4.21]{EZB}).
We also write
\[ 
\begin{split}
\Im  \langle P ( z) u , u \rangle - \Im \langle P ( z) \psi^w u , \psi^w u 
\rangle &= \Im z  \left( \langle Q ( z ) u , u \rangle - 
\langle Q ( z ) \psi^w u , \psi^w u \rangle \right)\\ &= 
 \Im z \; {\mathcal O } ( 1) \| ( 1 - \psi^w ) u \| \| u \|\\  &= 
\Im z \; {\mathcal O} ( 1) \left( \| v \| \| u \| + {\mathcal O}
( h^\infty ) \| u \|^2 \right) \,, \end{split} \]
where we used elliptic regularity \eqref{eq:5.1.1} in the last 
estimate. Then, applying \eqref{eq:5.1.2},
\[ 
\begin{split}
\| u \| \|v \| & \geq  
- \Im \langle ( P ( z ) - i W ) u , u \rangle \\
& = 
- \Im \langle ( P ( z ) - i W ) \psi^w u , \psi^w u \rangle 
- \Im z \; {\mathcal O} ( 1) \left( \| v \| \| u \| 
+ {\mathcal O} ( h^\infty ) \|u\|^2 \right)  \\
& \ \ \ + \langle ( W  - \psi^w W \psi^w ) u , u \rangle  - {\mathcal O} ( h ) 
\| u \|^2  \\
& \geq \Im z  \left( \| \psi^w u \|^2/ C  - {\mathcal O}(1) \|v\| \| u \| 
- {\mathcal O} ( h ) \| u \|^2 \right) \,. \end{split} \]
Here  $ W - \psi^w W \psi^w \geq - {\mathcal O} ( h ) $ follows
from  the semi-classical sharp G{\aa}rding inequality. 

For small $ \Im z $ the term $ \|v\| \| u \| $ on the left hand side 
can be absorbed in the right hand side, and by adding 
$ \Im z \| (1 - \psi^w ) u \|^2 $ to both sides we obtain
\[ \Im z \| u \|^2 / C  \leq  \| u \| \| v \| + {\mathcal O} ( h ) 
\Im z \| u \|^2  \, , \]
and that gives
\[ \| u \| \leq \frac{C}{ \Im z }  \| v \|   \,. \] 
Combined with the estimates in \S \ref{glue}
this proves 
\[ \| (  P ( z ) - i W )  ^{-1} \|_{L^2 \rightarrow L^2} \lesssim
\frac 1 {\Im z } \,, \ \ \text{ for } \ \Im z > 0 \,, \ \ |z | < \delta_0 \,. \]

\subsection{Estimates on the real axis.}
\label{ere}

In this section we will use a commutator argument to obtain an
estimate on the real axis.  In fact, this bound automatically gives
holomorphy of $ ( P ( z ) - i W )^{-1} $ in $ \Im z > - \nu_1 h / \log
( 1/h ) .$ 

In this and the following sections we will assume that $ z = {\mathcal O} ( h ) $
so that we can work at a fixed energy level. That means that
\begin{equation}
\label{eq:PGz}
P ( z ) = P - z Q + {\mathcal O}_{H^2_h \rightarrow L^2 }  ( h^2 ) \,,
\end{equation}
$ P $ and $ Q $ are self-adjoint, and 
where $ Q = q^w ( x , h D) \in \Psi^{2,0}_h ( X ) $ is elliptic
and has a positive symbol in a neighbourhood of $ T^*_{ U_2 } X 
\cap p^{-1} ( [ - \delta, \delta ] ) $. The estimates are uniform 
when we shift the energy level within $ | \Re z | < \delta_0 $ and
hence we obtain the estimates in Theorem \ref{th1}.

For simplicity of the presentation we assume that $ \Gamma_\pm $ 
have global defining functions, that is that $ \Gamma_\pm $ are
orientable. The only object that needs to be globally defined, however, 
is the escape function $ G $ given in \eqref{eq:defG}. That 
involves only squares of defining functions, that is the 
$ d ( \bullet , \Gamma_\pm )^2 $, near $ K $, and these are 
well defined and smooth. 

We start with the following 
\begin{lem}
\label{l:defG}
Let $ \varphi_\pm $ be any defining functions of $ \Gamma_\pm $: 
\[ \Gamma_\pm = \{ \rho \; : \; \varphi_\pm ( \rho ) = 0 \} \,, \ \ \ 
d \varphi_\pm \rest_{\Gamma_\pm } \neq 0 \,. \]
Then, there exist $ c_\pm \in \CI ( T^* X ; \RR ) $ such that  
\begin{equation}
\label{eq:Hpp}
  H_p \varphi_\pm = \mp c_\pm^2 \varphi_\pm \,, \ \ \text{$ 
c_\pm > 0 $ in $ \neigh( K_0 ) $,} 
\end{equation}
and we can choose the sign of $ \varphi_\pm $ so that 
\begin{equation}
\label{eq:pois}  \{ \varphi_+ , \varphi_- \} \rest_K > c_0 > 0  \,.
\end{equation}
\end{lem}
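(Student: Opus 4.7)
The plan is first to reduce the statement to producing defining functions $\varphi_\pm$ with $H_p\varphi_\pm = f_\pm \varphi_\pm$ for some smooth $f_\pm$ satisfying $\mp f_\pm > 0$ near $K_0$, which will then give $c_\pm = \sqrt{\mp f_\pm}$ (smoothly extended to $T^*X$). For any smooth defining function $\tilde\varphi_\pm$, the invariance of $\Gamma_\pm$ under the flow forces $H_p\tilde\varphi_\pm\rest_{\Gamma_\pm}=0$, and Hadamard's lemma yields smooth $\tilde f_\pm$ with $H_p\tilde\varphi_\pm = \tilde f_\pm \tilde\varphi_\pm$; the remaining work is to modify $\tilde\varphi_\pm$ by a positive factor so that $\mp\tilde f_\pm > 0$ on $K_0$.

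Consider the ``$+$'' case. I will first obtain an orbit-averaged estimate: integrating $H_p\tilde\varphi_+ = \tilde f_+\tilde\varphi_+$ along the flow yields
\[ \tilde\varphi_+(\varphi^t(\rho))=\tilde\varphi_+(\rho)\exp\!\left(\int_0^t \tilde f_+(\varphi^s(\rho))\,ds\right), \]
and differentiating at $\rho_0\in K_0$ in a direction $v\in E^-_{\rho_0}$---which is transverse to $\Gamma_+$ since $T_{\rho_0}\Gamma_+ = T_{\rho_0}K\oplus E^+_{\rho_0}$---combined with the normal hyperbolicity bound $\|d\varphi^t v\|\leq Ce^{-\theta t}$ for $t\geq 0$ gives $\int_0^t \tilde f_+(\varphi^s(\rho_0))\,ds\leq -\theta t+O(1)$ uniformly on the compact set $K_0$. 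The main step, and the expected main obstacle, is to upgrade this time-averaged inequality to a pointwise one. For this I introduce the cohomological correction
\[ G_T(\rho) \defeq \frac{1}{T}\int_0^T(T-s)\,\tilde f_+(\varphi^s(\rho))\,ds,\qquad \rho\in K_0, \]
extend smoothly to $T^*X$, and verify directly (using that $H_p$ is tangent to the invariant $K_0$) that
\[ H_p G_T\rest_{K_0} = \frac{1}{T}\int_0^T \tilde f_+\circ\varphi^s\,ds - \tilde f_+\rest_{K_0}. \]
Then $\varphi_+\defeq e^{G_T}\tilde\varphi_+$ is again a defining function of $\Gamma_+$ and satisfies $H_p\varphi_+=f_+\varphi_+$ with $f_+\rest_{K_0}\leq -\theta+O(1/T)$, strictly negative on $K_0$ for $T$ large and hence negative in a neighborhood by continuity; set $c_+\defeq\sqrt{-f_+}$. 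The construction of $\varphi_-$ and $c_-$ is identical with time reversed.

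For the Poisson bracket assertion, each $\Gamma_\pm$ is a hypersurface, hence coisotropic, with symplectic complement $\RR\cdot H_{\varphi_\pm}$; the standard identity $(T\Gamma_+\cap T\Gamma_-)^\omega=(T\Gamma_+)^\omega+(T\Gamma_-)^\omega$ then gives $(TK)^\omega=\RR\cdot H_{\varphi_+}\oplus\RR\cdot H_{\varphi_-}$ at every point of $K$. The dynamical hypotheses state that $K$ is symplectic, so the symplectic form is nondegenerate on $(TK)^\omega$; this forces $\{\varphi_+,\varphi_-\}=\omega(H_{\varphi_+},H_{\varphi_-})\neq 0$ throughout $K_0$. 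Compactness of $K_0$ and continuity then deliver $|\{\varphi_+,\varphi_-\}|\geq c_0>0$, and flipping the sign of one of $\varphi_\pm$ (which preserves the exponential relation $H_p\varphi_\pm=\mp c_\pm^2\varphi_\pm$ with the same $c_\pm$) arranges $\{\varphi_+,\varphi_-\}\rest_K > c_0 > 0$.
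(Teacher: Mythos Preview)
Your argument is correct and self-contained. Note that you are implicitly proving the existence version of the statement (you replace the given $\tilde\varphi_\pm$ by $e^{G_T}\tilde\varphi_\pm$), which is in fact what is needed and used later; the assertion cannot literally hold for \emph{every} defining function, since replacing $\varphi_\pm$ by $e^G\varphi_\pm$ shifts $\alpha_\pm$ by $H_pG$, and $H_pG$ can be made positive at any single point of $K_0$.

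The paper takes a different and much shorter route for the first part. Rather than correcting the defining function, it works with the squared distance $d(\cdot,\Gamma_\pm)^2$ and reduces the sign condition on $\alpha_\pm$ to the statement $H_p\,d(\cdot,\Gamma_\pm)^2\sim\mp d(\cdot,\Gamma_\pm)^2$ near $K_0$; this in turn is obtained from the exponential contraction $d(\exp(\pm tH_p)(\rho),\Gamma_\pm)^2\leq Ce^{-\theta t}d(\rho,\Gamma_\pm)^2$, by citing \cite[Lemma~5.2]{SjDuke}. Your approach instead differentiates along $E^-$ to convert the linearized hyperbolicity bound directly into the orbit-average inequality $\int_0^t\tilde f_+\circ\varphi^s\,ds\le-\theta t+O(1)$ on $K_0$, and then uses the Liv\v{s}ic-type weight $G_T$ to upgrade the time average to a pointwise bound. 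The paper's version is brief but outsources the key step; yours is longer but makes transparent exactly where the freedom to modify $\varphi_\pm$ enters and why the averaging is needed.

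For the Poisson bracket, your argument via $(T\Gamma_+\cap T\Gamma_-)^\omega=\RR H_{\varphi_+}\oplus\RR H_{\varphi_-}$ and the symplecticity of $K$ is essentially the same as the paper's. One small remark: your justification that $v\in E^-_{\rho_0}$ is transverse to $\Gamma_+$ invokes $T_{\rho_0}\Gamma_+=T_{\rho_0}K\oplus E^+_{\rho_0}$, whereas the hypotheses give that decomposition inside the energy surface; the conclusion is still correct since $E^-\subset T(p^{-1}(\lambda))$ and $T_{\rho_0}K\cap T_{\rho_0}p^{-1}(\lambda)=T_{\rho_0}K_\lambda$.
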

\begin{proof}
Since $ H_p $ is tangent to $ \Gamma_\pm $ we have $
H_p \varphi_\pm = \alpha_\pm \varphi_\pm $ and 
$ H_p \varphi_\pm^2 = 2 \alpha_{\pm } \varphi_{\pm}^2 $. 
To see that $ \mp \alpha_\pm > 0 $, we need to check that
\[  H_p d( \bullet , \Gamma_\pm)^2 = 
\frac d {dt} \exp ( t H_p) ^* d ( \bullet , \Gamma_\pm )^2  \big|_{t=0} 
\sim \mp d ( \bullet, \Gamma_\pm )^2 \,, \ \ 
\text{ in } \neigh(K_0) \,. \]
But this follows from the assumption \eqref{eq:normh} which implies that
\[  d( \exp ( \pm t H_p ) (\rho) , \Gamma_\pm )^2 \leq C \exp( - \theta t ) d
( \rho, \Gamma_\pm )^2 \,, \ \ 0 \leq t \leq T  \,, \]
for $ \rho$ in a $T$-dependent neighbourhood of $ K_0$ 
-- see \cite[Lemma 5.2]{SjDuke}.

To see \eqref{eq:pois} we note that $ d\varphi_\pm ( \rho) $,  
$ \rho \in K_0$,  are linearly independent and vanish on 
$ T_\rho K_0 \subset T_\rho ( T^*X ) $ which is a symplectic manifold
of codimension $ 2 $. Hence $ (H_{\varphi_\pm})_\rho  $ are linearly independent
and transversal to $ T_\rho K_0 $, and 
\[ \{ \varphi_- , \varphi_+ \} ( \rho ) = \omega_\rho ( H_{\varphi_+ } , 
H_{\varphi_-} ) \neq 0 \,, \]
because of the non-degeneracy of $ \omega $, the symplectic form. 
 If necessary switching the sign of one
of the $ \varphi_\pm$ we can then obtain \eqref{eq:pois}.
\end{proof}

We define
\begin{equation}
\label{eq:defG}
G ( \rho ) =  \chi ( \rho ) \log 
   \frac{ \varphi_-^2 ( \rho ) + h/\tih } 
{ \varphi_+^2 ( \rho) + h/\tih }  + C_1 \log \left( \frac 1 h \right)
\chi_1 ( \rho ) G_1 ( \rho)    \,, 
\end{equation}
where: $ \chi \in \CIc ( T^* X ) $ is supported near $ K_0 $, with
$\chi=1$ on the set $V$ in \eqref{eq:gsa0}; $ G_1 $ 
is described in \S \ref{eft};
$ \chi_1 \in \CIc ( T^*X ) $, 
\[ \chi_1 ( \rho ) \equiv 1 \,,  \ \  \rho \in 
 p^{-1} ( [ -2 \delta, 2 \delta ] ) \cap T_{ B( 0 , 2 R ) } ^* X \,; \]
 $\supp \nabla \chi
\subset \{\chi_1=1\};$
and $ C_1 $ is a large constant. Writing $ G^w = G^w ( x, h D) $ 
we observe that 
\begin{equation}
\label{eq:logh}
   \| G^w u \|_{H^k_h } \leq \log ( 1/h ) \| u \|_{ L^2 } \,, \ \ \forall \, k 
\,.
\end{equation}
We also recall an elliptic estimate:
\begin{equation}
\label{eq:elli} \| ( P - i W ) u \|_{L^2}  \geq \| ( 1 - \psi_1^w )(  P - i W) 
u \| \geq  
\frac 1 C \| ( 1 - \psi_2^w ) u \|_{ H^2_h } - {\mathcal O} ( h^\infty ) \| u 
\|_{L^2 } \,, \end{equation}
where $ \psi_j \in \CIb (T^*X ; [ 0 , 1 ] ) $ are as in 
Proposition \ref{p:work}.
In fact, if $ E_1 $ has the properties given in 
that proposition, 
\[ \begin{split} \| ( 1 - \psi_2^w ) u \|_{H^2_h}  &  = \| ( 1 - \psi_2^w ) E_1 
(  P - i W ) u \|_{H^2_h} 
  + {\mathcal O}(h^\infty ) \| u \| \\
& = 
\| ( 1 - \psi_2^w ) E_1 ( 1 - \psi_1^w ) ( P - i W ) u \|_{H^2_h} 
  + {\mathcal O}(h^\infty) \| u \| \\
& \leq C \| ( 1 - \psi_1^w ) ( P - i W ) u \|_{L^2} 
  + {\mathcal O}(h^\infty) \| u \| \,, \end{split} \]
which is \eqref{eq:elli}.

The elliptic estimate shows that we only need to prove
\[  \| ( P ( z ) - i W ) u \| \geq \frac{ h} {\log ( 1/h ) } \,, \]
for $ u $ satisfying 
\begin{equation}
\label{eq:loch} \widetilde\chi^w ( x, h D ) u = u + {\mathcal O}_{H^k_h} ( h^\infty ) \,, \ \
\| u \| = 1 \,, \end{equation}
where $ \widetilde\chi $ has properties of, say, $ \psi_1 $  in \eqref{eq:elli}.
That is because the commutator 
terms appearing after this localization can be estimated using \eqref{eq:elli}. 

Hence from now on we assume that $ u $ satisfies \eqref{eq:loch}
with the support of $ \chi $ in a small neighbourhood of the 
energy surface $ p^{-1} ( 0 ) $.

We now proceed with the positive commutator estimate.
Let $ M_0> 0  $, $ \RR \ni z = {\mathcal O} ( h ) $, and calculate
\begin{equation}
\label{eq:posc} 
\begin{split} 
& - 2\Im \langle ( P(z) -  i W ) u , ( G^w + M_0 \log ( 1/h)) u \rangle \\
& \ \ \ = 
- 2 \Im \langle ( P - z Q  -  i W ) u , ( G^w + M_0 \log ( 1/h)) u \rangle 
- {\mathcal O} ( h^2 ) \| u \|^2 \\
& \  \ \  =  - i \langle [ P , G^w  ] u ,  u \rangle +
2 M_0 \log (1/h)  \langle W  u ,  u  \rangle +  
2 \langle W u , G^w u \rangle   - {\mathcal O} ( h^2 ) \| u \|^2   \\
&  \ \ \ \geq h  \langle ( H_p G )^w u , u \rangle + 
2 M_0 \log (1/h)  \langle W  u ,  u  \rangle
 - 2 \| W u \| \| G^w  u \| - {\mathcal O} ( h^{\frac 32} 
\tih ^{\frac 32} ) \| u \|^2 \\
& \ \ \ \geq   h  \langle ( H_p G )^w  u , u \rangle
+  {M_0}  \log (1/h) \| W^{\frac 12} u \|^2 - 
{\mathcal O} ( h^{\frac 32} 
\tih ^{\frac 32} ) \| u \|^2 
\,, 
\end{split}
\end{equation}
where we used the fact that $ 0 \leq W \leq \sqrt W $ and 
chose $ M_0 $ large enough.

To analyze $ (H_p G )^w ( x, h D ) $ we proceed locally 
using the invariance properties described in \S \ref{ptm}:
the resulting errors are of lower order. To keep the 
notation simple we write the argument as if $ \varphi_\pm $
were defined globally (which is the case when 
$ \Gamma_\pm $ are orientable).

The crucial calculation is based on Lemma \ref{l:defG}:
\begin{equation*}
 H_p G = \frac{ ( c_+\varphi_+ )^2 } { \varphi_+^2 + h / \tih } 
+ \frac{ ( c_-\varphi_- )^2 } { \varphi_-^2 + h / \tih } +R_0+R_1\in 
\tilde S_{\frac12} \,, \ \ \text{ in $ \neigh(K_0)  $;}
\end{equation*}
here $R_0$ is the term arising from $H_p(\chi)$ and $R_1$ from
$H_p(\chi_1).$

Put 
\[ \Phi_\pm \defeq  \widehat \varphi_\pm ^w ( x, h D ) 
\in \PT 
\,, \ \
\widehat \varphi_\pm \defeq 
 \frac{ c_\pm\varphi_\pm  } { \sqrt{\varphi_\pm^2 + h / \tih} } \,.
\]
We now recall the properties of $ G_1 $ enumerated in \S
\ref{eft}; note further that $\supp R_0\subset \{H_pG_1\geq 1\},$ hence for
$C_1 \gg 0$ we may absorb the $R_0$ term into the term arising from
$H_p G_1,$ and obtain the following global description of $H_p G:$
\begin{equation}
\label{eq:cru1} H_p G = \widehat \varphi_+^2 + \widehat \varphi_-^2 +R_1+ C_1 \log( 1/h ) a\,,
\end{equation}
where $ a \in S ( T^* X ) $, and 
\[  a ( \rho ) \geq 1/2 \,, \ \  d ( \rho, K) > \epsilon > 0 \,, \ \ 
\rho \in \neigh ( p^{-1} ( 0 ) )  \,, \ \ 
\rho \in U_2 \,. \] 

We should now remember that using the rescaling \eqref{eq:resc}
we are now in the semiclassical calculus with the $ \tilde h $ 
Planck constant. That means that the Weyl quantization is equivalent
to the $ \tih $ quantization.

Then \eqref{eq:cru1} and the fact that we are using the Weyl
quantization show that
\[  ( H_p G )^w ( x, h D ) = \Phi_+^2 + \Phi_-^2 + C_1 \log(1/h) a^w ( x, h D) 
+R_1^w + {\mathcal O}_\PT ( \tih^2 ) \,. \] 

We now write
\[ \Phi_+^2 + \Phi_-^2 = \Phi^* \Phi 
+ i [ \Phi_+ , \Phi_-] \,, \ \ 
\Phi \defeq \Phi_+ - i \Phi_-\]
so that, without writing the terms involving $ a^w ( x, h D) $ and $R_1^w$,
\begin{equation}
\label{eq:lowG}
\begin{split}
\langle ( H_p G )^w ( x, h D)  u , u \rangle & \geq
\langle  ( \Phi_+^2 + \Phi_-^2 ) u , u \rangle - {\mathcal O} ( \tih^2 ) 
\| u \|^2  
\\
& \geq  \|  \Phi u \|^2_2 + \langle 
i [ \Phi_+ , \Phi_-] u , u \rangle - {\mathcal O} ( \tih^2 ) \| u \|^2  
\\
& \geq M \tih  \|  \Phi  u \|^2_2 + 
h \langle \{ \widehat \varphi_+ , \widehat  \varphi_- \}^w ( 
x, h D) u , u \rangle - {\mathcal O} ( \tih^2 ) \| u \|^2 
\\
& \geq \langle ( M \tih ( \widehat \varphi_+^2 + \widehat \varphi_-^2 ) + 
h \{ \widehat \varphi_+ , \widehat \varphi_- \} )^w ( x, h D ) u , u 
\rangle - {\mathcal O} ( \tih^2 ) \| u \|^2 
\,,
\end{split} \end{equation}
where $ M $ is some large constant.
Putting 
$$ \widetilde \varphi_\pm \defeq (\tilde h/h )^{\frac12} \varphi_\pm \,,
$$ 
we calculate
\[ \begin{split}   h \{ \widehat \varphi_+, \widehat \varphi_- \}  = &
\frac { \tih c_+ c_-  \{ \varphi_+ , \varphi_- \} }
{  ( 1 + \tilde \varphi_+^2 )^{\frac 32 } 
 (  1 + \tilde \varphi_-^2 )^{\frac 32 } } +
\frac {  ( h \tih )^{\frac 12} \tilde \varphi_+ \{ c_+ , \varphi_- \} }
{ ( 1  + \tilde \varphi_+^2 )^{\frac 12 } 
 ( 1   + \tilde \varphi_-^2 )^{\frac 32 } }  \\ 
& \ \ + 
 \frac {   ( h \tih )^{\frac 12}  \tilde \varphi_+  \{ c_- , \varphi_+ \} }
{ ( 1  + \tilde \varphi_+^2 )^{\frac 32 } 
 ( 1  + \tilde \varphi_-^2 )^{\frac 12 } } + 
\frac { h \tilde \varphi_+ \tilde \varphi_-  \{ c_+ , c_- \} }
{ ( 1  + \tilde \varphi_+^2 )^{\frac 32 } 
 ( 1  + \tilde \varphi_-^2 )^{\frac 32 } } \\
& = \frac { \tih c_+ c_-  \{ \varphi_+ , \varphi_- \} }
{ ( 1  + \tilde \varphi_+^2 )^{\frac 32 } 
 ( 1  + \tilde \varphi_-^2 )^{\frac 32 } }  - {\mathcal O}_\ST ( ( h \tih )^{\frac 12} 
)
\end{split}
\]
Hence 
\[ \tilde \varphi \defeq  M \tih ( \widehat \varphi_+^2 + \widehat \varphi_-^2 ) + 
h \{ \widehat \varphi_+ , \widehat \varphi_- \}  \, \]
satisfies 
\[ \tilde \varphi \in \tih \ST \,, \]
and, using \eqref{eq:pois}, we obtain near $ K_0$, 
\[ \begin{split}
\tilde \varphi & = 
 \tih \left( M ( \tilde \varphi_+^2 + \tilde \varphi_-^2 ) + 
 \frac { c_+ c_-  \{ \varphi_+ , \varphi_- \} }
{ ( 1  + \tilde \varphi_+^2 )^{\frac 32 } 
 ( 1  + \tilde \varphi_-^2 )^{\frac 32 } }  
- {\mathcal O}_\ST ( (  h /{\tih} )^{\frac 12} ) \right)  \\
& \geq 
\tih \left( M ( \tilde \varphi_+^2 + \tilde \varphi_-^2 ) + 
 \frac { c_0} { ( 1  + \tilde \varphi_+^2 )^{\frac 32 } 
 ( 1  + \tilde \varphi_-^2 )^{\frac 32 } }  
- {\mathcal O}_\ST  ( (  h /{\tih} )^{\frac 12} ) \right) \\ 
 & 
\geq c_1 \tih \,, \ \ \ c_1 > 0 \,. 
\end{split} \]

We now return to \eqref{eq:posc} which combined with 
\eqref{eq:logh},\eqref{eq:cru1}, and the above definition of $ \tilde \varphi $
gives, for some large constant $ M_1 $, and $ \RR \ni z$,
$ u $ satisfying \eqref{eq:loch},
\[ \begin{split}  M_1 \log ( 1/h)  \| ( P ( z ) - i W ) u \| \| u \| 
& \geq \langle ( h \tilde \varphi^w  + hR_1^w + C_1 \log(1/h) a^w + M_0 \log ( 1/h ) W ) u , u 
\rangle  \\
& \geq h \langle ( \tilde \varphi^w+ R_1^w +  \log ( 1/h ) b^w ) u , u \rangle 
\end{split} \]
where, as $ W \geq 0 $,  
\[ b \defeq  C_1 a + M_0 W \geq 0 \ \  \Longrightarrow \ \ 
b^w ( x, h D ) \geq - C h \,, \] 
with the implication due to the sharp G{\aa}rding inequality. 
We also observe that 
\[ \tih \ST \ni \tilde \varphi + \tilde h b \geq c_1 \tih, \ \ c_1 > 0 \,, \]
near $ p^{-1} ( ( - \delta , \delta ) ) $.
Furthermore, since $u$ is assumed to satisfy
\eqref{eq:loch}, and as we have $R_1^w=\mathcal{O}(h^\infty)$ on such
distributions,
we obtain
\[ \begin{split}  M_1 \log ( 1/h)  \| ( P ( z ) - i W ) u \| \| u \| 
& \geq h \langle ( \tilde \varphi^w +  \tih b^w ) u , u \rangle 
- {\mathcal O} ( h^2 \log(1/h) ) \| u \|^2 \\
& \geq c_3 \tih h \|u \|^2 \,, \ \ c_3> 0 \,, 
\end{split} \]
which proves the bound \eqref{eq:th1} for $ \Im z = 0 $.

\subsection{Estimates for $ \Im  z > - \nu_0 h $}
\label{en}

To prove the estimates deeper in the complex plane we 
will use exponentially weighted estimates which use the
same escape function $ G $ given in \eqref{eq:defG}. 
We start with a lemma 
which is based on \cite[Proposition 7.4]{SZ10}:
\begin{lem}
\label{l:defm}
Let $ G $ be given by \eqref{eq:defG} above. Then for 
$ \rho , \rho' $ in any compact neighbourhood of $ K_0 $ we have 
\[  \frac { \exp G  ( \rho ) } { \exp G ( \rho' ) } 
 \leq C \left 
\langle \frac {\rho - \rho' }
{(h/\tih)^{\frac12} } \right \rangle^N \,, \ \ N > 0 \,. \] 
In particular,
\[ m ( \rho ) \defeq 
\exp G ( \rho ) \] 
is an order function for the $ \PT $ calculus, that is, satisfies
\eqref{eq:ord1}.
\end{lem}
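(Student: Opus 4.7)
I would verify the two defining properties of an order function for the $\PT$ calculus appearing in \eqref{eq:ord1}: the ratio estimate stated in the lemma, and the derivative bound $\partial^\alpha G = \Oo((h/\tih)^{-|\alpha|/2})$ for $|\alpha|\geq 1$.  Throughout, set $\epsilon := h/\tih$ and split $G = G_{\rm f}+G_{\rm s}$ into the logarithmically flattened piece
\[ G_{\rm f}(\rho) = \chi(\rho)\log\frac{\varphi_-^2(\rho)+\epsilon}{\varphi_+^2(\rho)+\epsilon} \]
and the smooth bounded-coefficient piece $G_{\rm s}(\rho)=C_1\log(1/h)\chi_1(\rho)G_1(\rho)$.

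The heart of the derivative estimate is the elementary claim that for any $\CI$ function $\varphi$ with bounded derivatives,
\[ \partial^\alpha \log(\varphi^2+\epsilon) = \Oo(\epsilon^{-|\alpha|/2}). \]
I would prove this by induction, starting from $\partial\log(\varphi^2+\epsilon)=2\varphi\,\partial\varphi/(\varphi^2+\epsilon)$, which is $\Oo(\epsilon^{-1/2})$ since $|\varphi|/\sqrt{\varphi^2+\epsilon}\leq 1$; higher derivatives follow by repeated application of $|\varphi|^k/(\varphi^2+\epsilon)^\ell=\Oo(\epsilon^{(k-2\ell)/2})$.  Multiplying by $\chi$ via Leibniz preserves this, so $\partial^\alpha G_{\rm f}=\Oo(\epsilon^{-|\alpha|/2})$. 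For $G_{\rm s}$, the derivatives are of size $\Oo(\log(1/h))$, which is absorbed into $\Oo(\epsilon^{-|\alpha|/2})=\Oo((\tih/h)^{|\alpha|/2})$ under the working hypothesis $(\tih/h)^{1/2}\gtrsim\log(1/h)$ implicit in the choice of $\tih$ in \S\ref{s2p}.

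For the ratio estimate I would bound $G(\rho)-G(\rho')$ on a compact neighborhood of $K_0$. Since $\chi_1 G_1$ is bounded there, $|G_{\rm s}(\rho)-G_{\rm s}(\rho')|=\Oo(\log(1/h))=\Oo((\tih/h)^{1/2})$, which is absorbed into the multiplicative constant $C$ again by the same working hypothesis. The decisive estimate is for $G_{\rm f}$: applying $|\varphi(\rho)-\varphi(\rho')|\leq L|\rho-\rho'|$ and $|\varphi(\rho')|/\sqrt{\varphi^2(\rho')+\epsilon}\leq 1$, I would write
\[ \frac{\varphi^2(\rho)+\epsilon}{\varphi^2(\rho')+\epsilon}\leq 1+\frac{|\varphi^2(\rho)-\varphi^2(\rho')|}{\varphi^2(\rho')+\epsilon}\leq 1+\frac{L^2|\rho-\rho'|^2}{\epsilon}+2L\frac{|\rho-\rho'|}{\sqrt\epsilon}. \]
Taking logarithms and symmetrizing in $\rho,\rho'$ yields
\[ \left|\log\frac{\varphi^2(\rho)+\epsilon}{\varphi^2(\rho')+\epsilon}\right|\leq N\log\langle(\rho-\rho')/\sqrt\epsilon\rangle, \]
which is exactly the desired bound for each of the two terms defining $G_{\rm f}$. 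The stray contribution $(\chi(\rho)-\chi(\rho'))\log(\varphi^2(\rho')+\epsilon)$ arising when $\chi$ is not constant on the neighborhood is $\Oo(|\rho-\rho'|\log(1/\epsilon))$ and fits into the same logarithmic bound.

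The main obstacle is the parameter bookkeeping: ensuring that $\tih$ is chosen so that $(\tih/h)^{1/2}$ dominates every $\log(1/h)$ factor produced by the cutoffs $\chi,\chi_1$ and by $G_{\rm s}$, so that these ``lower order'' terms do not destroy the exotic symbol estimates built into the $\ST$ class. Once this is settled, the final statement that $m=\exp G$ is an order function for the $\PT$ calculus is just the general equivalence recorded in \eqref{eq:ord1}.
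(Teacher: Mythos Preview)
Your derivative estimates and your treatment of the core logarithmic term $\log\frac{\varphi_-^2+\epsilon}{\varphi_+^2+\epsilon}$ are correct and match the paper's argument. The gap is in your handling of $G_{\rm s}=C_1\log(1/h)\,\chi_1 G_1$ in the ratio estimate. You bound $|G_{\rm s}(\rho)-G_{\rm s}(\rho')|$ by $\Oo(\log(1/h))$ using only that $\chi_1 G_1$ is bounded, and then assert this is ``absorbed into the multiplicative constant $C$.'' But after exponentiation this gives a factor of size $h^{-M}$, not $\Oo(1)$; when $|\rho-\rho'|\lesssim (h/\tih)^{1/2}$ the right-hand side $C\langle(\rho-\rho')/(h/\tih)^{1/2}\rangle^N$ is bounded, so no choice of $C$ or $N$ can absorb $h^{-M}$. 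The hypothesis $(\tih/h)^{1/2}\gtrsim\log(1/h)$ does not help here, since the issue is the \emph{exponential} of $\log(1/h)$.

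The fix is exactly the mechanism you already invoke for the ``stray contribution'' from $\chi$: use that $\chi_1 G_1$ is Lipschitz (not merely bounded) on the compact neighbourhood, so that
\[
|G_{\rm s}(\rho)-G_{\rm s}(\rho')|\leq C\,|\rho-\rho'|\log(1/h)\sim C\,|\rho-\rho'|\log(1/\epsilon),
\]
with $\epsilon=(h/\tih)^{1/2}$. This reduces both the $G_{\rm s}$ term and the stray $\chi$ term to the single elementary inequality
\[
|\rho-\rho'|\log(1/\epsilon)\leq C+C\log\bigl\langle(\rho-\rho')/\epsilon\bigr\rangle,\qquad \rho,\rho'\in Q\Subset\RR^{2n},
\]
which you assert but do not prove. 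This is the one genuinely nontrivial step: setting $t=|\rho-\rho'|/\epsilon$, it becomes $\epsilon t\log(1/\epsilon)\leq C(1+\log\langle t\rangle)$ for $0<t\leq C/\epsilon$, which holds because $t\mapsto(1+\log\langle t\rangle)/t$ is eventually decreasing and its value at $t\sim 1/\epsilon$ is $\sim\epsilon\log(1/\epsilon)$. The paper carries out precisely this computation.
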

\begin{proof}
For the reader's convenience we recall the slightly modified
argument.  We first claim that
\begin{equation}
\label{eq:claim1}
  \frac{ \varphi_{\pm } ( \rho)^2 + h/\tih } {\varphi_\pm ( \rho')^2
+ h/\tih } \leq 
C_1 \left \langle
\frac{ \rho - \rho'}{(h/\tih)^{\frac12}  } \right \rangle^2 \,:
\end{equation}
Since $ \varphi_\pm^2 \sim d ( \bullet , \Gamma_\pm )^2 $, 
we have 
\[ \begin{split} 
\varphi_\pm ( \rho )^2  + h/\tih & \leq C ( d ( \rho , \Gamma_\pm )^2 + 
h/\tih ) 
\leq C ( d ( \rho', \Gamma_\pm)^2  + | \rho'- \rho|^2 + 
h/\tih ) \\
& \leq C' ( \varphi_\pm ( \rho' )^2 + h/\tih  + | \rho' - \rho|^2 
) \\
& = C' ( \varphi_\pm ( \rho' )^2 + h/\tih  
 + (h/\tih)  \langle ( \rho - \rho' )/ ( h/\tih)^{\frac12} \rangle^2 ) \\
& \leq 2 C'  ( \varphi_\pm ( \rho' )   ^2 + h/\tih )
  \langle ( \rho - \rho' )/ (h/\tih)^2 \rangle^2 \,.
\end{split} \]
which proves \eqref{eq:claim1}. In other words, for
\[ \widehat G ( \rho ) \defeq 
 \log 
   \frac{ \varphi_-^2 ( \rho ) + \epsilon^2 } 
{ \varphi_+^2 ( \rho) + \epsilon^2 } \,, \ \  \epsilon = 
\left(\frac h {\tih} \right)^{\frac12} \,, \] 
we have
\[ | \widehat G ( \rho ) - \widehat G ( \rho' ) | \leq C + 2  \log 
\langle ( \rho - \rho' ) / \epsilon \rangle \,.\]
For $  \chi \in \CIc $, 
\[ |  \chi ( \rho ) \widehat G ( \rho ) - 
 \chi ( \rho' ) \widehat G ( \rho' ) | \leq 
C | \rho - \rho' | \log ( 1 / \epsilon ) + C \log \langle ( \rho - \rho' ) /
 \epsilon \rangle \,. \]
Also, 
\[ | \chi_1 ( \rho ) G_1 ( \rho ) - \chi_1 ( \rho' )  G_1 ( \rho' ) | 
\leq C | \rho - \rho' | \log ( 1/ \epsilon ) \,, \]
with $G_1$ as in \S\ref{eft};
thus to prove the lemma
we need 
\[ | \rho - \rho' | \log \frac 1 \epsilon 
 \leq C  \log \langle ( \rho - \rho' ) /
\epsilon  \rangle 
+ C  \,, \  \ \rho, \rho' \in Q \Subset \RR^{2n} \,.\]
If we put $ t = | \rho - \rho' |/ ( C\epsilon) $, this becomes
\[  \epsilon \log \frac{1}{\epsilon } 
\leq \frac{\log \langle t \rangle + 1  }t   \,,  \ \ 0 < t \leq 
\frac1{\epsilon} \,, \]
which is acceptable as the function $ t \mapsto ( \log  \langle t \rangle  + 1) / t $
is decreasing.
\end{proof}

We now consider $ ( P( z ) - i W ) _{s G} $ defined by 
\eqref{eq:exGe} using this weight function $ G $. 
Then using \eqref{eq:ads} and Lemma \ref{l:err} (to understand 
$ \ad_{sG^w}  P  $), 
\[ \begin{split}
& P(z)_{sG}  = P - i s h ( H_p G)^w ( x, h D ) -
 z Q  + {\mathcal O}_{\PT} ( 
s^2 \tih h + s h^{\frac32} \tih^{\frac32} + h^2 ) \,. 
\end{split} \]
and
\[ W_{sG } = W + i s h \log(1/h) ( H_{\rho_1 G_1} W )^w ( x, h D )  
 + {\mathcal O}_{\Psi_h} ( s^2 (h \log(1/h) )^2)    \,, \]
where $ G_1 $ and  $ \rho_1 $ are as in \eqref{eq:defG}.
Hence,
\[ \begin{split} 
- \Im \langle ( P( z ) - i W )_{sG} u , u \rangle 
& = \langle (  H_p G )^w + W - \Im z \, Q ) u , u \rangle \\
& \ \ \ \ \ +  {\mathcal O}_{\PT} ( 
s^2 \tih h + s h^{\frac32} \tih^{\frac32} + h^2 ) \,.
\end{split} \]
For $ u $ satisfying \eqref{eq:loch}, $ s> 0  $ small, $ \Im z > - \nu_0 h $ for 
a sufficiently small $ \nu_0 $, we can now proceed as at the end of 
\S \ref{ere} to obtain invertibility:
\[ c_1 h \tilde h  \| u \| \leq \|  ( P ( z ) - i W )_{sG} u \| \,, \ 
\ \ \Im z > - c_0 h \tih \,, \ \ | z | \leq C h \,. \]
Since 
\[ \exp (\pm s G^w ( x, h D) )  = \mathcal{O}_{L^2 \rightarrow L^2} 
 ( h^{-k } ) \,, \]
that means that
\[ h^{k_1}   \| u \| \leq C_1 \|  ( P ( z ) - i W )  u \| \,, \ 
\ \ \Im z > - c_0 h \tih \,, \ \ | z | \leq C h \,. \]

\subsection{A global estimate}
\label{glue}
Here we show how the assumption \eqref{eq:nasty} 
part {\em (ii)} of Proposition \ref{p:work} give 
a global estimate; recall that the estimates of \S\ref{ep}--\ref{en}
applied to $u$ supported in $X_0.$  We fix a partition of unity on the
interior of $X_0$
$$
1=\chi_0^2+\chi_1^2 
$$
such that $\chi_0=1$ on $U_2,$ $\supp \chi_1 \subset
\{W=1\},$ and with $\supp \chi_i\subset \{W >0\}$ for $i=1,2.$

The results of \S \ref{ep},\ref{ere}, and \ref{en} show that, in the
notation of \S \ref{ina}, 
\begin{equation}
\label{eq:eW} \gamma ( z , h )  \| \chi_0 u \| 
\leq C \| ( P_0 ( z ) - i W ) \chi_0 u \|   \,, \ \ 
\gamma ( z, h ) \defeq 
 \begin{cases} 
\Im z \,,  & \Im z > 0  \,, \\
h/ \log (1/h) \,, &   \Im z=0\,, \\ 
h^{k} \,,  &    \Im z > -\nu_0 h  \,,
\end{cases} \end{equation}
and, since $ \chi_1 W = 1 $, 
\begin{equation}
\label{eq:ei}  c_0 \| \chi_1 u \| \leq \| ( P ( z ) - i W ) \chi_1 u \| \,,
\end{equation}
as implied by the hypothesis \eqref{eq:nasty}. 

Now, writing $ \widetilde P ( z ) = P ( z ) - i W $, 
\begin{equation*}
\begin{split}
 \| \widetilde P ( z )  u \|^2 &  = \| \chi_0 \widetilde P ( z )  u \|^2 + \| \chi_1 \widetilde P ( z )  u \|^2  \\
&  \geq  \| \widetilde P ( z )  \chi_0  u \|^2 + \| \widetilde P ( z )  \chi_1  u \|^2 
-  \| [ \chi_0, \widetilde P ( z )  ] u \|^2  -  \| [ \chi_1, \widetilde P ( z )  ] u \|^2 \\
& \hspace{0.2in}  - \; 2  \left( \| \chi_0 \widetilde P ( z )  u \|\| [ \chi_0, \widetilde P ( z )  ] u \|
+ \| \chi_1 \widetilde P ( z )  u \|
\| [ \chi_1, \widetilde P ( z )  ] u \|
\right)
 \\
&  \geq  \| \widetilde P ( z )  \chi_0  u \|^2 + \| \widetilde P ( z )  \chi_1  u \|^2 
  - 2 C ( \| [ \chi_0, \widetilde P ( z )  ] u \|^2  +
 \| [ \chi_1, \widetilde P ( z )  ] u \|^2 )
\\
& \hspace{0.2in}
-  \| \widetilde P ( z )   u \|^2/C   
\end{split} \end{equation*}
Since on the support of the commutator terms $ W = 1 $ and 
$ P ( z) = P_0 ( z ) $, we have
obtained 
\[ \begin{split}
C_0 \| ( P ( z ) - i W ) u \|^2 & \geq \| ( P_0 ( z ) - i W ) \chi_0 u \|^2 
+ \| ( P ( z ) - i ) \chi_1 u \|^2 \\ 
& \hspace{0.2in} - C_1 
( \| [ \chi_0 ,  ( P_0 ( z ) - i )  ] u \|^2  +
 \| [ \chi_1 , (  P_0 ( z ) - i )   ] u \|^2 ) \,. 
\end{split} \]
Using {\em(ii)} of Proposition \ref{p:work} we obtain
\[ \begin{split} \| [ \chi_j,  ( P ( z ) - i )  ] u \|^2 & \leq 
C h^2  \|  \psi ( P_0  ( z ) - i ) u \|^2 - 
{\mathcal O} ( h^\infty ) \| u \|_2^2 \\
& \leq C h^2 \| ( P ( z ) - i W ) u \|^2
- 
{\mathcal O} ( h^\infty ) \| u \|_2^2 \,,  
 \end{split} \]
where $ \psi \in \CIc ( X_0) $ satisfies 
$$ W \rest_{ \supp \psi } 
\equiv 1 \,, \ \ \ \psi \rest_{\supp d\chi_j } \equiv 1 \,. $$
We apply this estimate, \eqref{eq:eW}, and \eqref{eq:ei}, to get
\[ \begin{split}
C_2 \| ( P ( z ) - i W ) u \|^2 & \geq \| ( P_0 ( z ) - i W ) \chi_0 u \|^2 
+ \| ( P ( z ) - i ) \chi_1 u \|^2 - {\mathcal O}(h^\infty ) \| u \|^2 
\\
& \geq \gamma ( z, h ) \| \chi_0 u \|^2 + 
c_0 \| \chi_1 u \|^2 - {\mathcal O} ( h^\infty ) \| u \|^2 \\
& \geq \gamma ( z , h ) ( \| \chi_0 u \|^2 + \| \chi_1 u \|^2 ) 
- {\mathcal O} ( h^\infty ) \| u \|^2 \\
& \geq (\gamma ( z , h ) /2) \| u \|^2 \,. 
\end{split} \]
which 
completes the proof of Theorem~\ref{th1}.

\section{Results for resonances}
\label{rere}
Here we briefly indicate how the proof presented in 
\S \ref{prth} adapts to give a resonance free strip.
First we need to make additional assumptions on the
operator guaranteeing meromorphic continuation of the
resolvent.

Suppose that $ X $ is given by \eqref{eq:X} with 
$ N \geq 1 $.
For simplicity
we will assume that $ N = 1 $, with obvious modifications
required when for $ N > 1 $. 

\renewcommand\thefootnote{\ddag}%

We make the same assumptions\footnote{We assume that 
$ p_1 $ is of order $ 1 $ in $ \xi $ to make the 
case of $ h = 1 $ easier to state.} as in \cite[(1.5)-(1.6)]{SZ10}
and  \cite[\S 3.2]{NZ3}: $ P = P ( h ) = P ( h )^* $, 
\begin{gather}
\label{eq:gac}
\begin{gathered}
 P(h) = p^w ( x, hD) + h p_1^w ( x , h D; h ) \,, \ \ 
p_1 \in S^{1,0} ( T^* X) 
\,, \\
 | \xi | \geq C \ \Longrightarrow \ 
p ( x , \xi ) \geq \langle \xi \rangle^2 / C \,, 
\ \ \ p = E \ \Longrightarrow dp \neq 0 \,,
\\
\exists \; R_0,  \ \forall \;  
u \in \CI ( X \setminus X_0 )\,, \ \  P ( h ) u ( x ) = 
P_\infty ( h ) u ( x ) \,, 
\end{gathered}
\end{gather}
where in $ X \setminus X_0 = \RR^n\setminus B(0,R) $ 
\begin{equation}
\label{eq:Q}
 P_\infty(h)  =\sum_{|\alpha|\leq2} a_{\alpha}(x;h){(hD_x)}^{\alpha}    \,,
\end{equation}
with 
$a_\alpha(x;h)=a_\alpha(x)$ independent of $h$ for $|\alpha|=2$,
$a_{\alpha}(x;h)\in C_b^\infty(\RR^n)$ uniformly bounded with respect to
$h$ (here $C_b^\infty(\RR^n)$
denotes the space of $C^\infty$ functions with bounded derivatives 
of all orders), and 
\begin{gather}
\label{eq:valid}
\begin{gathered}
  \sum_{|\alpha|=2} a_\alpha(x) {\xi}^\alpha\geq {(1/c)}{|\xi|}^2, \;\; 
{\forall\xi\in\RR^n}\,, \text{ for some constant $c>0$, } \\
\sum_{|\alpha|\leq2}a_\alpha(x;h){\xi}^\alpha \, \longrightarrow\, 
{\xi}^2 \,,\quad  \text{as $ |x|\to \infty$, uniformly with respect to $h$.}
\end{gathered}
\end{gather}
We further take the {\em dilation analyticity} assumption to hold
in a neighbourhood of infinity:
there exist $\theta_0\in {[0,\pi)},$ $\epsilon>0$ such that the
coefficients $a_\alpha(x;h)$ of $P_\infty(h)$ extend holomorphically in $x$ to
$$\left\{ r\omega: \omega\in {{\mathbb C}^n}\,, \ \ 
 \text{dist}(\omega,{\bf{S}}^n)<\epsilon\,, \ 
r\in {{\mathbb C}}\,, \   |r|>R_0\,, \ 
 \text{arg}\,r\in [-\epsilon,\theta_0+\epsilon) \right\} \,, $$
with \eqref{eq:valid} valid also in this larger set of $x$'s.

We note that more general assumptions are possible. We could
assume that $ X $ is a scattering manifold which is analytic 
near infinity and satisfies the conditions introduced in 
\cite{WZ}.

\begin{thm}
\label{th2}
Suppose $ P $ is an operator satisfying the dilation 
analyticity assumptions above and such that $ P( z ) = P - z $ 
satisfies the assumptions of Theorem \ref{th1}. 
Then for any $ \chi \in \CIc ( X ) $, $ \chi ( P - z )^{-1 } \chi $,
continues analytically from $ \Im z > 0 $ to $ \Im z > - \nu_0 h $,
$ | z | < \delta_0 $, and 
\begin{equation}
\label{eq:th2} \| \chi ( P - z  ) ^{ -1 } \chi \|_{L^2 \rightarrow L^2 }
\    \leq \ \begin{cases} 
C_\chi h^{-1} \log (1/h)\,, &   \Im z=0\,,  \\ 
C_\chi h^{-k} \,,  &    \Im z > -\nu_0 h \,,
\end{cases}\end{equation}
for $ |z | < \delta_0 $. 
In other words, there are no resonances in a strip of width proportional
to $ h $.
\end{thm}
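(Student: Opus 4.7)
The plan is to reduce Theorem \ref{th2} to the estimates already established in \S\ref{prth}, using complex scaling (analytic distortion) in the ends as a replacement for the complex absorbing potential $iW$. First I would invoke the dilation analyticity hypothesis to define the complex scaled operator $P_\theta$ in the standard way (see e.g.\ \cite{SZ1}): choose a smooth family of contours $\Gamma_\theta \subset \CC^n$ agreeing with $\RR^n$ for $|x| \leq R_1$ and scaled by $e^{i\theta}$ for $|x| \geq 2R_1$, with $\theta > 0$ a small fixed constant. Then $P_\theta$ coincides with $P$ on $X_0$, lies in $\Psi^{2,0}_h$ on all of $X$ after identification of $\Gamma_\theta$ with $\RR^n$, and the resonances of $P$ in $D(0,\delta_0) \cap \{\Im z > -\nu_0 h\}$ are identified with the eigenvalues of $P_\theta$ there, with $\chi(P-z)^{-1}\chi = \chi(P_\theta - z)^{-1}\chi$ for $\chi$ compactly supported inside the unscaled region.

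The principal symbol of $P_\theta$ on $|x| \geq 2R_1$ is $p_\theta(x,\xi) = e^{-2i\theta}p(x,\xi) + \mathcal{O}(1/|x|)$, so $\Im p_\theta \leq -c\langle \xi \rangle^2$ there for $|z|$ small. Thus $P_\theta - z$ plays exactly the role of $P(z) - iW$ in the framework of \S\ref{ina}: the term $-\Im(P_\theta-z)$ vanishes on $U_2$ (where $P_\theta = P$ is self-adjoint) and is uniformly positively elliptic on the characteristic set outside a compact set. The escape function $G$ from \eqref{eq:defG} is supported in a small neighbourhood of $K \subset U_1$, hence lives entirely in the unscaled region, so conjugation by $\exp(sG^w/h)$ commutes with the scaling in the sense of not interacting with the contour. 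I would then run the positive commutator arguments of \S\ref{ere}--\S\ref{en} verbatim with $P(z) - iW$ replaced by $P_\theta - z$: the calculation \eqref{eq:posc} proceeds identically, and the key lower bound for $\langle (H_p G)^w u, u\rangle$ is a purely local computation near $K$ unaffected by the scaling.

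The analogue of the gluing step in \S\ref{glue} requires a little more care, since the scaled operator is no longer trivially invertible on the far region via $\|\chi_1 u\| \leq C\|(P-i)\chi_1 u\|$. In its place I would use semiclassical ellipticity of $P_\theta - z$ in $X \setminus U_2$: since $\Im p_\theta$ is strictly negative on the characteristic set of $p$ in the scaled region, a standard parametrix construction (as in part (ii) of Proposition \ref{p:work}, adapted to the scaled setting using \cite{WZ}) gives $\|\chi_1 u\|_{H^2_h} \leq C\|(P_\theta - z)\chi_1 u\| + \mathcal{O}(h^\infty)\|u\|$ uniformly for $\Im z > -\nu_0 h$. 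Combined with the trapped-set bound on $\chi_0 u$ this reproduces the global estimate \eqref{eq:th2}.

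The main obstacle, I expect, is the transition region $\{R_1 \leq |x| \leq 2R_1\}$ where the contour is interpolated: here $P_\theta$ is neither self-adjoint nor strictly absorbing, and one must verify that commutator terms with $\chi_0, \chi_1$ arising in the gluing, as well as errors from the conjugation \eqref{eq:exGe}, are harmless. This amounts to arranging the supports so that $\chi_0$ can be chosen equal to one throughout the interpolation region while still vanishing on the scaled end; the positivity of $-\Im p_\theta$ can then absorb these error terms just as $W$ did in \S\ref{glue}. Since all of this is local outside $U_2$ and relies only on the holomorphic extension properties of the coefficients, no new ingredient beyond the machinery of \S\ref{prth} and standard complex scaling theory is required.
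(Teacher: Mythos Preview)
Your overall architecture---replace $iW$ by complex scaling and rerun \S\ref{prth}---is the paper's strategy too, but there is a concrete error in your reading of the escape function that creates a real gap, and a corresponding difference in the choice of scaling angle.

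The escape function $G$ of \eqref{eq:defG} is \emph{not} supported in a small neighbourhood of $K$. The first summand $\chi\log(\cdots)$ is, but the second summand $C_1\log(1/h)\,\chi_1 G_1$ is the global escape function of \S\ref{eft}, with $\chi_1\equiv 1$ on $p^{-1}([-2\delta,2\delta])\cap T^*_{B(0,2R)}X$. This term is what supplies the positivity of $H_pG$ \emph{away} from $K$; without it the commutator estimate \eqref{eq:posc} would only control $u$ microlocally near $K$. In the proof of Theorem~\ref{th1} the possibly negative contribution of $H_p(\chi_1)G_1$ on $\supp d\chi_1$ is absorbed by $W$, which equals $1$ there. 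If you take a fixed scaling angle $\theta$ and insist that $\supp G$ lie inside the unscaled region $\{|x|<R_1\}$, then $\supp d\chi_1$ also lies in the unscaled region, where there is no analogue of $W$ to absorb this term. You are left with an annulus on which neither $H_pG$ nor $-\Im p_\theta$ is positive, and the argument does not close.

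The paper resolves this by taking an $h$-dependent angle $\theta\sim h\log(1/h)$ and letting $\chi_1$ overlap with the scaling region. Then $-\Im p_\theta$ and $h\,H_pG$ are of the \emph{same} order $h\log(1/h)$, so the transition from escape-function positivity to scaling positivity can be made seamlessly; this is what ``the choice of $\chi_1$ has to be coordinated with complex scaling'' means in the sketch. The price is that $e^{sG}$ is now an order function in a region where the symbol of $P_\theta$ is genuinely $h$-dependent, which is why the finer version of Lemma~\ref{l:defm} from \cite[Proposition~7.4]{SZ10} is invoked. Your fixed-$\theta$ route could in principle be salvaged by letting $G$ extend into the scaled region and checking that $-\Im p_\theta$ (of size $\theta\gg h\log(1/h)$) dominates the error $sh(H_{\Re p_\theta}G_1 - H_p G_1)=\mathcal O(sh\log(1/h))$ in the transition zone, but that analysis is not the ``verbatim'' repetition of \S\ref{ere}--\S\ref{en} you describe, and your gluing paragraph does not address it.
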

\medskip

\noindent
{\bf Sketch of the proof:}
The proof follows the same strategy as the proof of 
the estimate $ {\mathcal O} ( h^{-k} ) $ for $ \Im z > - \nu_0 h $
in Theorem \ref{th1} but with $ W $ replaced by complex scaling
with angle $ \theta \sim h \log ( 1/h) $. That requires a finer
version of Lemma \ref{l:defm} which is given in 
\cite[Proposition 7.4]{SZ10}. In particular, the choice of the
cut-off function $ \chi_1 $ has to be coordinated with complex scaling 
(see also \cite[\S 4.2]{SZ10}). The same exponential weight
can then be used, following the arguments of \cite[\S 8.4]{SZ10}, but 
without the complications due to second microlocalization
needed there. 

This provides the bound $ {\mathcal O} (h^{-k})  $ for the 
norm of the analytically continued cut-off 
resolvent, $ \chi ( P - z)^{-1} \chi $, for $ \Im z > - \nu_0 h $.
To obtain the bound on the real axis we can proceed either as in 
\S \ref{ere}, or using the ``semiclassical maximum principle''
-- see for instance \cite[Lemma 4.7]{Bu} or \cite[Lemma A.2]{BZ2}.
\stopthm

Ideas used in the semi-classical case provide 
results in the case of the classical wave equation.
We first note that if $ P = P ( 1 ) $ satisfies the assumptions 
above then the resonances are defined as poles of the meromorphic 
continuation of $ ( P - \lambda^2)^{-1} $ from $ \Im \lambda > 0 $
to $ \Im \lambda > - c_0 | \Re \lambda | $ -- 
see \cite{Sj}. When $ P_\infty = -\Delta $ and the dimension, $ n $, 
is odd, the meromorphic continuation extends to the entire complex
plane (that is why we use the parametrization $ 
z = \lambda^2$, and when $ n $ is even we pass to the infinitely sheeted logarithmic
plane) -- see \cite{SZ1}. Theorem~\ref{th2} implies that
for $  \chi \in \CIc( X ) $,
\begin{equation}
\label{eq:la2}  \| \chi ( P - \lambda^2 )^{-1} \chi \|_{ L^2 \rightarrow L^2} 
\leq C_\chi |\lambda|^k \,, 
 \ \  \Im \lambda > - \alpha_1 \,, \ | \Re \lambda | >  \alpha_0 \,, \ \ \alpha, \beta > 0 \,.
\end{equation}
To relate this to energy decay we procceed in the spirit of \cite{BZ1}. 
Suppose that the operator $P$ satisfies the assumptions above with  $h=1$
and consider the wave equation for $ P $ with compactly supported
initial data:
\begin{equation}
\label{eq:waveP}
( D_t ^2 - P ) u = 0 \,, \ \ u \rest_{t=0} = u_0 \,, \ \
D_t u \rest_{ t = 0 } = u_1 \,, \ \ \supp u_j \subset V \Subset X \,. 
\end{equation}
The local energy decay results are 
different depending on finer assumptions on $P$ which
we state as three cases:

\begin{center}
\begin{tabular}{||l|l|l||} \hline
\ & \ & \\
\ Case 1 \ & \ \ $ P |_{ {\RR^n \setminus B ( 0 , R_0 ) } } = - \Delta 
 |_{ {\RR^n \setminus B ( 0 , R_0 ) } }$ \ \  & \ $ n $ odd \ \\ 
\ & \ & \\
\hline
\ & \ & \\
\ Case 2 \ & \ \ $ P |_{ {\RR^n \setminus B ( 0 , R_0 ) } } = - \Delta 
 |_{ {\RR^n \setminus B ( 0 , R_0 ) } }$ \ \ & \ $ n $ even \ \\ 
\ & \ & \\
\hline
\ & \ & \\
\ Case 3 \ & \ \ $ P |_{ {\RR^n \setminus B ( 0 , R_0 ) } } = P_\infty  
 |_{ {\RR^n \setminus B ( 0 , R_0 ) } }$ \ \ & \ any $ n $ \  \\ 
\ & \ & \\ \hline
\end{tabular}
\end{center}
\medskip
\noindent 
where $ P_\infty $ is an elliptic operator close to the Laplacian at infinity
-- see \eqref{eq:Q} and \eqref{eq:valid} --  with $ h= 1 $. 
\begin{thm}\label{t:dec}Let $ P $ be an operator satisfying the 
assumptions above with  $h=1$.
Let $ U , V \subset X $ be bounded open sets, and let 
$\Psi \in C^{\infty}\left( \mathbb{R}\right)$ be an even function such that
 \begin{equation}
 \Psi\left(x\right)=1 \ \ \begin{cases} \text{for  $x \in \RR$ in cases $1$ and $2$}\\
 \text{for $x \geq 1$ in case $3$} \end{cases}, \qquad
\Psi\left( x\right) = 0 \ \ \text{ near $0$ in case $3$.}
\end{equation}
Suppose that $ P $ has neither discrete spectrum nor a resonance
at $ 0 $. Then 
there  exists $ K > 0 $ such that the solutions of \eqref{eq:waveP} 
with 
\[  \| u_0 \|_{H^{K + 1}} \leq 1 \,, \ \ \| u_1 \|_{ H^K } \leq 1 \,, 
\ \ \Psi ( \sqrt P ) u_j = u_j 
\]
satisfy the following local energy decay estimates:
\begin{equation}
\label{eq:t.2}
\int_{ V } \left( | u ( t , x )|^2 + |\partial_t u ( t, x ) |^2 
\right) 
dx  \ \leq \
\begin{cases} \ \ C \exp ( - \alpha t ) \,, & \text{in case $1$,} \\
\ \ C t^{-n+1} \log t \,,   & \text{in case $2$,} \\ 
\ \ C_M t^ {-M} \,, \ \forall\, M > 0 \,, &  \text{in case $3$,}
\end{cases}
\end{equation}
where the constant $ C$ ($ C_M$) depends on $ U $ and $ V $ (and $M$) only.
\end{thm}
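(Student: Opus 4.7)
The plan is to follow the strategy of Burq--Zworski \cite{BZ1}, reducing the local energy decay problem to the analytic continuation and resolvent bounds provided by Theorem~\ref{th2}. Fix a cutoff $\chi \in \CIc(X)$ equal to $1$ on a neighbourhood of $\overline{U \cup V}$, containing the support of $u_0, u_1$; since $\supp u_j \subset U$ and we measure the energy on $V$, both the solution and the data can be sandwiched between $\chi$'s when analyzing the compactly supported part of the solution. Using Stone's formula together with the fact that $\sqrt{P}$ is well-defined by spectral calculus (as $P \geq 0$ by assumption and $0$ is neither an eigenvalue nor a resonance), the time-evolved solution restricted to $V$ can be written as a contour integral
\[
\chi u(t) = \frac{1}{2\pi i} \int_{\RR} e^{-it\lambda}\, \chi \bigl(R(\lambda) - R(-\lambda)\bigr) \chi\, F(\lambda, u_0, u_1)\, d\lambda,
\]
where $R(\lambda) = (P-\lambda^2)^{-1}$ and $F$ is an explicit expression in $u_0, u_1$; an analogous formula holds for $\partial_t u$. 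The insertion of $\Psi(\sqrt{P})$ on the data allows the integrand to be replaced by its product with $\Psi(\lambda)$, a real-analytic even function.

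The next step is to deform the contour into the lower half plane, exploiting the meromorphic continuation of $\chi R(\lambda) \chi$ guaranteed by Theorem~\ref{th2} and the dilation-analyticity assumptions. On the high-frequency part $|\Re \lambda| > \alpha_0$, the bound \eqref{eq:la2} gives $\|\chi R(\lambda)\chi\| \lesssim |\lambda|^k$ for $\Im \lambda > -\alpha_1$. Shifting the contour to $\Im \lambda = -\alpha_1$ on this part produces an overall factor of $e^{-\alpha_1 t}$, while the polynomial loss $|\lambda|^k$ is absorbed using the Sobolev regularity $u_0 \in H^{K+1}$, $u_1 \in H^K$ for $K$ chosen sufficiently large in terms of $k$. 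The standard semiclassical maximum principle (as in \cite[Lemma A.2]{BZ2}) or a direct application of Phragm\'en--Lindel\"of can be used to interpolate between the real-axis $h^{-1}\log(1/h)$ bound and the deeper $h^{-k}$ bound so as to optimize the decay rate.

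The three cases are then distinguished purely by the low-frequency behaviour $|\Re\lambda| < \alpha_0$. In Case 1 ($n$ odd, $P = -\Delta$ at infinity), $\chi R(\lambda)\chi$ continues meromorphically to all of $\CC$, and the no-resonance-at-zero hypothesis ensures it is holomorphic in a full neighbourhood of $\overline{\{|\Im\lambda| \leq \alpha_1, |\Re\lambda| \leq \alpha_0\}}$; the contour may be shifted down by $\alpha_1$ throughout, yielding the exponential rate $\alpha = \alpha_1$ (up to an $\epsilon$-loss from the interpolation). In Case 2 ($n$ even), the continuation is to the logarithmic cover, and the decay is limited by the behaviour of $R(\lambda)$ as $\lambda \to 0$; integration by parts around the logarithmic branch point produces the classical $t^{-n+1}\log t$ rate of Vainberg. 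In Case 3, the cutoff $\Psi$ in the hypothesis kills the integrand on a neighbourhood of $\lambda = 0$, so only the high-frequency analysis is needed and the resolvent bound $|\lambda|^k$ traded against arbitrarily many derivatives of the data yields $t^{-M}$ for every $M$.

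The main obstacle is the rigorous low-frequency analysis in Case~2, where one must carefully analyze the asymptotic expansion of $\chi R(\lambda)\chi$ near the branch point at $\lambda=0$ (including the logarithmic contributions specific to even dimensions) and execute the contour deformation around this singularity without degrading the gain from the high-frequency strip; this is essentially the content of Vainberg's theorem and its refinement in \cite{BZ1}. The remaining technical point is the uniform control of error terms coming from the commutators between the cutoff $\chi$ and $P$, which are handled as in the proof of Theorem~\ref{th1} using the ellipticity of $P$ away from the trapped set together with part \emph{(ii)} of Proposition~\ref{p:work}.
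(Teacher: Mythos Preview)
Your overall strategy---Stone's formula plus contour deformation using the resolvent bounds of Theorem~\ref{th2}---is the same as the paper's, and Cases~1 and~2 are handled along the right lines (the paper treats Case~2 by deforming the contour near $0$ to a wedge and invoking the standard low-frequency bound $\|\chi R(\lambda)\chi\|\le C|\lambda|^{n-2}|\log\lambda|$, then estimating $\int_0^1 x^{n-2}\log x\, e^{-xt}\,dx\lesssim t^{-n+1}\log t$ directly; ``integration by parts around the branch point'' is not quite the mechanism, but you correctly flag this as the delicate step).

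There is, however, a genuine gap in your treatment of Case~3. You assert that $\Psi(\lambda)$ is ``a real-analytic even function'': it is not. In Case~3, $\Psi$ is a smooth cutoff vanishing near $0$ and equal to $1$ for $|\lambda|\ge 1$; such a function is never analytic. Consequently, once $\Psi(\lambda)$ sits inside the integrand you cannot simply shift the contour into $\Im\lambda<0$ as you propose. The paper's remedy is to replace $\Psi$ by an \emph{almost analytic extension} $\widetilde\Psi$ with $\bar\partial\widetilde\Psi=\mathcal O(|\Im z|^\infty)$, chosen so that $\supp\bar\partial\widetilde\Psi$ lies in the pole-free region of $\chi R(\lambda)\chi$. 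Stokes's theorem then produces, in addition to the deformed-contour contribution (which decays exponentially as in Case~1), an area term
\[
\frac{i}{2\pi}\int_\Omega \bar\partial\widetilde\Psi(z)\,e^{-itz}\,\chi R(z)(z^2+i)^{-L}\chi\,dz,
\]
and it is the rapid vanishing of $\bar\partial\widetilde\Psi$ on $\Im z$ that yields $\mathcal O(t^{-M})$ for every $M$ after repeated integration by parts in $\Re z$. Your explanation---``the resolvent bound $|\lambda|^k$ traded against arbitrarily many derivatives of the data yields $t^{-M}$''---does not describe a valid mechanism: with a fixed $K$ you cannot buy arbitrarily many powers of $t$ that way, and indeed the statement promises one fixed $K$ working for all $M$.

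Two smaller points: the Phragm\'en--Lindel\"of/semiclassical maximum principle interpolation you invoke is used (optionally) in the proof of Theorem~\ref{th2}, not here---for Theorem~\ref{t:dec} the bound \eqref{eq:la2} is applied directly. And the closing remark about commutators $[\chi,P]$ and Proposition~\ref{p:work}(ii) is not relevant to this argument; that gluing was needed for Theorem~\ref{th1}, whereas here one works with the cut-off resolvent $\chi R(\lambda)\chi$ as a single object.
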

\begin{proof}
We first note that 
it is enough to obtain the estimates  $ \chi U ( t ) \chi 
\; : \; H^{K} \rightarrow L^2 $ where $ \chi \in \CIc ( X ) $ and
\[  U (t ) \defeq \frac {\sin t \sqrt P } {\sqrt P } \,. \]
To do that we follow the
standard procedure (see \cite{TZ},\cite[\S 4]{BZ1} 
and reference given there) and 
perform a contour deformation in the integral: 
\begin{equation}
\chi U(t)  (  P+i)^{-K/2} 
\Psi( \sqrt P)\chi= \frac i {2\pi} \int_ {-\infty} ^{ +\infty} e^ {-it 
\lambda } \chi  (  R ( \lambda ) - R ( - \lambda ))  {(\lambda^2 + i)^{-K/2}} 
 \Psi ( \lambda ) \chi d \lambda \,,
\end{equation} 
for $ t > 0 $. The contribution of $ R ( - \lambda ) $ in the
spectral projection can be eliminated by contour deformation when $ t > 0 $
-- see \cite[Sect.4]{TZ}.  Hence
\begin{equation}
\label{eq:1.5} \chi U(t)  (  P+i)^{-K/2} 
\Psi( \sqrt P)\chi= \frac i {2\pi} \int_ {-\infty} ^{ +\infty} e^ {-it 
\lambda } \chi   R ( \lambda )    (\lambda^2 + i)^{-K/2}  
 \Psi ( \lambda ) \chi d \lambda \,, \ \ t > 0 \,.
\end{equation} 

In case 1, i.e., odd dimensions and $P= -\Delta$ 
in the exterior of a (large) ball, we use the estimate \eqref{eq:la2}
to deform the contour to $ \Gamma = \RR - i \gamma $, 
$ 0 < \gamma < \alpha_1 $. This gives \eqref{eq:t.2} in that case.

In the case of a compactly supported perturbation of 
$ - \Delta $ and $n$ even, we have to modify 
this argument because the resolvent has a branching point at $\lambda =0$. 
Thus we deform the contour near $0$ to 
\[
\{ \lambda = x- i c_1 x, x\geq 0\}\cup\{z= x+ ic_1  x, x\leq 0\},\ \text{for 
$c_1 > 0 $, small.}
\]
We use the usual estimate for the resolvent near $0$:
\[
\|\chi R (\lambda)\chi\| \leq C_M |\lambda|^{n-2}|\log \lambda |
\] 
in any sector $ | \arg \lambda | < M $ -- see for instance 
\cite[\S 3]{Zw}. The dominant part of the integral \eqref{eq:1.5}
comes from the contour near $ 0 $ which gives
\[  \int_0^1 x^{n-2} \log x \,  e^{-x t} dx  \leq C t^{-n+1} \log t \,,
\]
which is the estimate in case 2. 

For case 3, that is the case of $ \Psi \not \equiv 1 $, we consider 
the analytic extension of that function, $\widetilde{ \Psi}$, with 
the property that 
$\bar\partial \widetilde{\Psi} = {\mathcal O} ( |\Im z|^\infty ) $ 
(the defining property of the almost analytic extension -- see
\cite[Chapter 8]{DiSj}) is supported in a set where $P$ 
has no resonances -- see Fig.\ref{f:bz}.
We deform \eqref{eq:1.5} to a contour which for $|z|>1$ is the same as 
before, and for $|z|<1$ is as in Fig.\ref{f:bz}.
\begin{figure}[ht]
\begin{center}
\ecriture{\includegraphics[width=10cm]{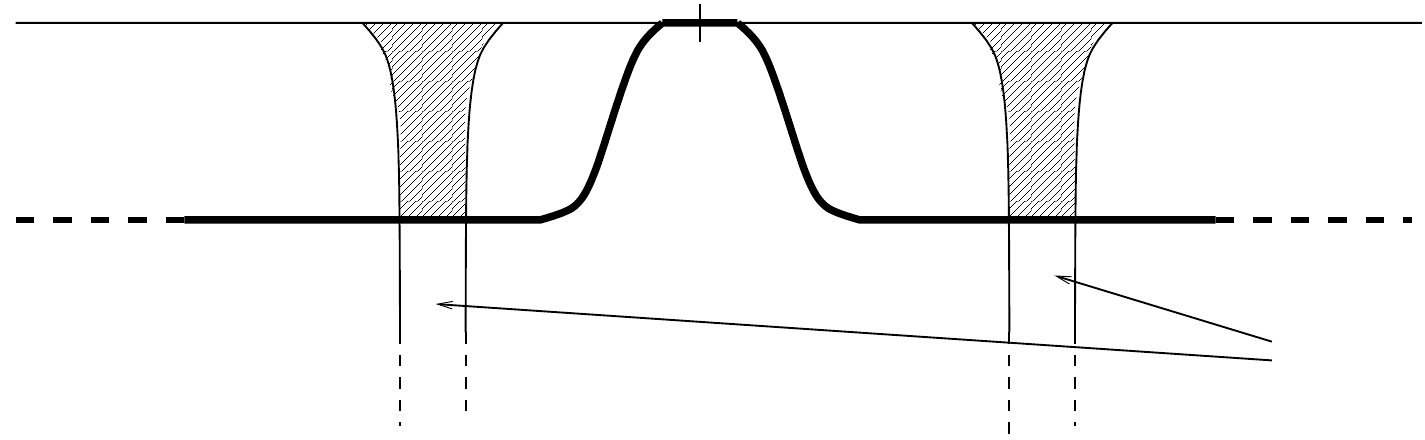}}{ \aat{24}{16}{$0$} \aat{45}{3}{\smash{support of}}\aat{45}{0}{\smash{\quad \,$\bar \partial 
\widetilde{\Psi}$}}\aat{1}{9}{$\widetilde{\Psi}=1$}\aat{22}{6}{$\widetilde{\Psi}=0$}\aat{44}{9}{$\widetilde{\Psi}=1$}}
\end{center}
 \caption{The contour deformation in case 3 and the support properties
of the almost analytic extension of $ \Psi$.\label{fig1}}
\label{f:bz}
\end{figure} 
By Stokes's formula we get exactly the same contributions as in case~1 
(since near $0$, $\widetilde \Psi =0$) with an additional term
\begin{equation}
\frac i {2\pi}\int_{\Omega} \bar \partial \widetilde \Psi \left(z\right) 
  e^ {- it z} 
\chi {(R(z)}{(z^2 + i)^{-L}} \chi \, dz
\end{equation}
where $\Omega$ is the support of $\bar \pa \widetilde \Psi$ between the real axis and the new contour
(shaded in Fig.~\ref{fig1}). 
Since $\bar \partial \widetilde \Psi \left(z\right)= 
{\mathcal {O}}\left( |\Im z|^{\infty}\right)$, 
a repeated integration by parts shows that this last term is 
${\mathcal {O}}\left( t^{-\infty}\right)$ (in the energy norm).
\end{proof}

\medskip
\noindent
{\em Proof of Corollary \ref{cor3}:} We follow the argument of
Burq \cite{Bu}. The left hand side of \eqref{eq:t.2} is 
bounded by the same quantity at $ t = 0 $, and in particular by 
$ \| u _0\|_{H^1}^2 + \| u _1\|_{L^2}^2 $. The estimate
\eqref{eq:t.2} shows that, in case 1 (that is, the case considered
in Corollary \ref{cor3}), it is also bounded by 
$ e^{-\alpha t } \| u_0 \|_{H^{K+1}} + \|u_1\|_{H^{K}} $. Interpolation 
between these two estimates gives \eqref{eq:cor3}. 
\stopthm

\pagebreak
\begin{center}
\textbf{Erratum to 
``Resolvent estimates for normally hyperbolic trapped sets'', Ann. Inst. Henri Poincar\'e (A), {\bf 12}(7)(2011), 1349-1385.}
\end{center}

\setcounter{equation}{0}
\setcounter{figure}{0}
\setcounter{table}{0}
\setcounter{page}{1}
\makeatletter
\renewcommand{\theequation}{S\arabic{equation}}
\renewcommand{\thefigure}{S\arabic{figure}}

In this erratum we correct three errors in the recent paper ``Resolvent estimates for normally hyperbolic trapped sets'', Ann. Inst. Henri Poincar\'e (A), {\bf 12}(7)(2011), 1349-1385.
The errors are minor and do not affect the
correctness of the principal results (although one mild hypothesis
needs to be explicitly added).  Descriptions of these errors and the necessary
corrections are as follows.  Note that this is the second revision of
this erratum, now reflecting the addition of an explicit hypothesis that the
trapped set should be symplectic.

\begin{itemize}

\item In \S 3.5 we omitted a crucial condition on $G$
which is needed to have (3.24). In (3.20) we need 
to strengthen the second condition to 
\[ G = G_1 + \log ( 1/h ) G_2 , \ \ 
   \partial^\alpha H_p^k G_1 = {\mathcal O} ( (h/\tilde h)^{ - |\alpha|/2} ) , \ \ 
k + | \alpha | \geq 1, \ \ 
\partial^\alpha G_2 = {\mathcal O} ( 1 ) . 
\]
This is satisfied for the weight $ G $ in \S\S 4.2--4.3. 
Expression (3.24) holds for $ \ell \geq 2,$ while the
case $ \ell = 1 $ yields the slight variant:
$$
\ad_{G^w(x,hD)} P\in h \log (1/h) \widetilde\Psi_{1/2}.
$$

The analysis follows from \cite[\S 8.2]{S_e-z} 
and is the same as in \cite[\S 8]{S_SZ10}. See
also \cite[\S 7]{S_Da-Dy} and 
\cite[Proposition 4.2]{S_Z} for similar arguments. 

\item

Lemma 4.1 is incorrect as stated. The conclusion 
(4.4) does \emph{not} hold for any defining functions
of $ \Gamma_\pm $ as can be seen by multiplying
$ \varphi_\pm $ by $ e^f $ and having $ |H_f | $ 
large somewhere. \emph{We are grateful to 
Semyon Dyatlov for pointing this out.}

The error in the proof comes from the fact that
$ C $ in the second displayed formula there may 
be greater than $ 1 $.

The simple correction is to state that there exists \emph{some} choice of
defining functions satisfying (4.4) in some neighbourhood
of $ K$. We start with  given defining functions $ \tilde \varphi_\pm $
and then, similarly as in \cite[Proof of Proposition 7.4]{S_SZ10}
(but for defining functions rather than their squares as in \cite{S_SZ10}), 
set
$$
\hph_\pm ( \rho )
\stackrel{\rm{def}}{=} 
\int_0^T \tilde \varphi_{\pm}(\exp t H_p (\rho)) \, dt.
$$
These are defining functions of $ \Gamma_\pm $ as these sets are
invariant under the flow.

Then 
\[ H_p \hph_\pm ( \rho ) = \tilde \varphi_\pm ( \exp T H_p (\rho ))
- \tilde \varphi_\pm ( \rho ) .\]
Since $ | \tilde \varphi_\pm ( \rho ) | \sim d ( \rho, \Gamma_\pm ) $, 
the second displayed formula in the proof of 
Lemma 4.1 with $ T $ large enough
(for $ \rho $ in a $ T $ dependent neighbourhood of  $K $), 
shows that
\[    | \tilde \varphi_+ ( \exp T H_p ( \rho )) | \ll | \tilde 
\varphi_+ ( \rho ) | 
, \ \ | \tilde \varphi_- ( \rho ) | \ll | \tilde 
\varphi_- ( \exp T H_p ( \rho ))|.
\]
Hence 
\[ H_p \hph_+  ( \rho ) \sim - \tilde \varphi_+ ( \rho ) \sim 
- \hph_+ ( \rho) , \ \ 
  H_p \hph_-  ( \rho ) \sim  \tilde \varphi_- ( \exp TH_p ( \rho) ) \sim 
\hph_- ( \rho) , \]
with constants depending on $ T $.
This gives (4.4).

\item  The assertion, in Dynamical Hypothesis (2), that $K$ must
  automatically be
  symplectic, seems to be false.  \emph{We must therefore add the
    hypothesis that $K$ is symplectic,} as this fact is used crucially
  in the end of the proof of Lemma~4.1, where we observe that $\{\varphi_+,
  \varphi_-\} \neq 0.$  \emph{We are grateful to 
Semyon Dyatlov for pointing this out.}

\end{itemize}

\end{document}